\documentclass{article}

\usepackage[left= 1in, right = 1in, top = 1in, bottom = 1in]{geometry}

\usepackage{amsmath, amsfonts, amssymb, amsthm, mathtools, thmtools}

\usepackage{bm}

\usepackage{url}

\usepackage{enumerate}

\usepackage[hidelinks]{hyperref}

\usepackage[capitalize]{cleveref}

\usepackage[T1]{fontenc}
\usepackage{graphicx}

\usepackage{mathtools}

\usepackage{tikz}
\usetikzlibrary{calc}
\usetikzlibrary{decorations.pathreplacing}
\usetikzlibrary{decorations.pathmorphing}
\usetikzlibrary{shapes.geometric}
\usetikzlibrary{fit}
\usetikzlibrary{arrows.meta}
\usetikzlibrary{arrows}

\usepackage{todonotes}

\DeclareMathOperator{\boxicity}{box}

\DeclareMathOperator{\thdim}{dim_{TH}}
\DeclareMathOperator{\Min}{Min}

\newcommand{\Iscr}{\mathcal{I}}
\newcommand{\Fscr}{\mathcal{F}}

\newtheorem{theorem}{Theorem}
\newtheorem{lemma}[theorem]{Lemma}
\newtheorem{proposition}[theorem]{Proposition}
\newtheorem{corollary}[theorem]{Corollary}
\newtheorem{claim}{Claim}

\theoremstyle{definition}
\newtheorem{definition}[theorem]{Definition}
\newtheorem{remark}[theorem]{Remark}
\newtheorem{question}[theorem]{Question}

\crefname{lemma}{Lemma}{Lemmas}
\crefname{proposition}{Proposition}{Propositions}
\crefname{corollary}{Corollary}{Corollaries}
\crefname{definition}{Definition}{Definitions}
\crefname{remark}{Remark}{Remarks}
\crefname{question}{Question}{Questions}

\title{Boxicity and Threshold Dimension of Zero Divisor Graphs}

\author{Marco Caoduro and Meike Neuwohner}

\date{}

\begin{document}

	\maketitle
	\begin{abstract}
	The \emph{zero divisor graph} $\Gamma(R)$ of a finite commutative ring $R$ has as vertices the non-zero zero divisors of $R$, with an edge between two elements exactly when their product is zero. 
	We determine the boxicity and threshold dimension of $\Gamma(R)$ for two classes of finite commutative rings: reduced rings and quotients of principal ideal domains.
	Our proofs use a new combinatorial gadget, the \emph{integral covering graph}, that captures the structure shared by both ring families and generalizes the \emph{disjointness graph} on subsets of $[n]$, where two subsets are adjacent if and only if they are disjoint.
	In doing so, we answer two questions recently posed by L.~Sunil Chandran and Suraj Kumar Sahoo in \emph{Boxicity of Zero Divisor Graphs}, Discrete Applied Mathematics \textbf{391} (2026).
	\end{abstract}
	%

	\section{Introduction}
	The \emph{intersection graph} of a finite set family $\Fscr$ is the graph with vertex set $\{v_A : A \in \Fscr\}$ and edge set $\{v_A v_B : A, B \in \Fscr,\ A \cap B \neq \emptyset\}$.
	\begin{definition}
		The \emph{boxicity} of a graph $G$, denoted by $\boxicity(G)$, is the minimum integer $d$ such that $G$ admits a representation as the intersection graph of a family of $d$-dimensional axis-parallel boxes in $\mathbb{R}^d$.
	\end{definition}
	Boxicity was introduced by Roberts in 1969~\cite{1969_Roberts}.
	The original motivation for its study arose from applications in ecology, where intersection models were used to describe niche overlap and competition between species~\cite{1978_Cohen,1976_Roberts}.
	Subsequently, boxicity has found applications in operations research, notably in fleet maintenance and task assignment problems~\cite{1983_Cozzens,1981_Opsut}.
	Boxicity is by now a well-studied graph parameter and has been investigated in relation to several classical graph invariants.
	Upper bounds on boxicity are known in terms of parameters such as the number of vertices~\cite{1969_Roberts}, maximum degree~\cite{2008_Chandran,2020_Scott}, vertex cover number~\cite{2009_Chandran}, acyclic colouring number~\cite{2013_Esperet}, and treewidth~\cite{2007_Chandran}.
	In addition, the boxicity of several important graph classes has been determined or bounded.
	These include outerplanar graphs~\cite{1984_Scheinerman}, planar graphs~\cite{1986_Thomassen}, bipartite graphs~\cite{1993_Bellantoni,1991_Hartman}, and line graphs~\cite{2011_Chandran}.
	For a comprehensive overview of results and techniques related to boxicity, we refer the reader to the recent survey~\cite{2026_Chandran}.
	
	Besides its geometric definition, boxicity can also be defined as a purely graph-theoretic parameter.
	The following classical result is due to Roberts~\cite{1969_Roberts}.
	\begin{lemma}\label{lemma:boxicity_interval_graphs}
		A graph $G$ has boxicity at most $d$ if and only if there exist $d$ interval graphs $I_1,\ldots,I_d$ on the same vertex set as $G$ such that
		\[
		E(G) = \bigcap_{j=1}^d E(I_j).
		\]
		Equivalently, each $I_j$ is a supergraph of $G$ and $G$ is the edge-wise intersection of these interval graphs.
	\end{lemma}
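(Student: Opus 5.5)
The plan is to prove both directions directly from the definition of boxicity. The key observation is that an intersection graph of axis-parallel boxes decomposes coordinatewise: two boxes $B = \prod_{j=1}^d [a_j,b_j]$ and $B' = \prod_{j=1}^d [a'_j,b'_j]$ intersect if and only if $[a_j,b_j]\cap[a'_j,b'_j]\neq\emptyset$ for every $j \in [d]$. This holds because a point lies in $B\cap B'$ exactly when each of its coordinates lies in the corresponding pair of intervals. I will adopt the convention, implicit in the paper's definition, that edges join distinct vertices and that boxes may be degenerate, i.e.\ some $a_j = b_j$.

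For the forward direction, suppose $G$ is the intersection graph of boxes $B_v = \prod_j [a_j(v),b_j(v)]$ for $v\in V(G)$. For each $j$, define $I_j$ to be the intersection graph of the intervals $[a_j(v),b_j(v)]$. Each $I_j$ is then an interval graph on $V(G)$. By the coordinatewise observation, $uv\in E(G)$ if and only if $uv\in E(I_j)$ for all $j$, so $E(G)=\bigcap_j E(I_j)$. In particular, each $I_j$ is a supergraph of $G$.

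For the converse, suppose interval graphs $I_1,\ldots,I_d$ on $V(G)$ satisfy $E(G)=\bigcap_j E(I_j)$. Fix, for each $j$, an interval representation $v\mapsto [a_j(v),b_j(v)]$ of $I_j$, and set $B_v=\prod_j [a_j(v),b_j(v)]\subseteq\mathbb{R}^d$. The same coordinatewise observation shows that $B_u\cap B_v\neq\emptyset$ if and only if $uv\in E(I_j)$ for all $j$, which holds if and only if $uv\in E(G)$. Hence $G$ is the intersection graph of these boxes, and $\boxicity(G)\le d$. The "equivalently" sentence in the statement is just a restatement, since an edge set that equals the intersection of the $E(I_j)$ is automatically contained in each of them.

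The only step needing care is a degenerate case: the lemma says "at most $d$", while a representation may a priori use fewer dimensions, or $G$ may be complete with boxicity $0$. This is handled by padding. A box representation in dimension $d'<d$ extends to dimension $d$ by appending the coordinate interval $[0,1]$ to every box. Correspondingly, one may add copies of the complete graph on $V(G)$, which is an interval graph, to the list $I_1,\ldots,I_d$ without changing the intersection of edge sets. Beyond this, there is no real obstacle; the whole argument rests on the product structure of boxes.
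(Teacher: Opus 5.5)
Your proposal is correct: projecting a box representation onto each coordinate gives the interval graphs, taking products of interval representations gives the converse, and padding with $[0,1]$ factors or complete graphs handles representations in fewer than $d$ dimensions. The paper gives no proof of its own and simply attributes the lemma to Roberts; your coordinatewise projection/product argument is the standard proof of that result.
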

	Given this definition, it is natural to replace the class of interval graphs with other graph classes.
	These `intersection dimension' parameters have been studied by Kratochv\'il and Tuza~\cite{1994_Kratochvil}.
	A particularly interesting parameter is obtained by replacing the class of interval graphs with that of threshold graphs.
	
	\begin{definition}
		Let $V$ be a finite set, let $\alpha\colon V\rightarrow\mathbb{R}$ and let $S\in\mathbb{R}$. We call the graph $T(\alpha,S)\coloneqq (V,\{\{v,w\}\in \binom{V}{2}\colon \alpha(v)+\alpha(w)\ge S\})$ the \emph{threshold graph induced by $\alpha$ and $S$}.
		We call a graph $G=(V,E)$ a \emph{threshold graph} if $G=T(\alpha,S)$ for some $\alpha$ and $S$.
	\end{definition}
	
	\begin{definition}
		The \emph{threshold dimension} of a graph $G$, denoted by $\thdim(G)$, is the smallest $d$ such that there exist $d$ threshold graphs $H_1,\ldots,H_d$ on the same vertex set as $G$ such that
		\[
		E(G) = \bigcap_{j=1}^d E(H_j).
		\]
	\end{definition}
	Note that the class of threshold graphs is contained in the class of interval graphs, hence $\boxicity(G) \leq \thdim(G)$ for any graph $G$. 
	While there are interval graphs with arbitrarily high threshold dimension (see \cite[Proposition 4]{2025_Francis}), the ratio between the threshold dimension and the boxicity of a graph is bounded by its chromatic number (see \cite[Theorem 19]{2021_Chacko}).

	\subsection{Reduced rings}
	In our work, we study the boxicity of the zero divisor graphs of rings, building on recent work in~\cite{2025_Chandran_ZeroDivisor,2025_Kavaskar}.
	Let $R$ be a finite commutative ring with non-zero identity.
	We call $r\in R$ a \emph{zero divisor} if there exists $s\in R\setminus \{0\}$ such that $r\cdot s=0$.
	We denote the set of zero divisors of $R$ by $Z(R)$.
	The object we are interested in this paper is the \emph{zero divisor graph} of a ring.
	\begin{definition}
		The \emph{zero divisor graph} $\Gamma(R)$ is the simple graph on the vertex set $Z(R)\setminus\{0\}$ in which two elements $r,s\in Z(R)\setminus \{0\}$ are connected by an edge if and only if $r\cdot s=0$.
	\end{definition}
	
	First, we address zero divisor graphs of \emph{reduced rings}.
	\begin{definition}
		We call $R$ \emph{reduced} if for any $r \in R \setminus \{0\}$ and $n\ge 1$, $r^n \neq 0$.
	\end{definition}

	Kavaskar~\cite{2025_Kavaskar} initiated the study of the boxicity of zero divisor graphs by studying reduced rings.
	Their results were subsequently strengthened by Chandran and Sahoo~\cite{2025_Chandran_ZeroDivisor}.
	\begin{theorem}
		Let $R$ be a finite commutative reduced ring with non-zero identity. 
		Then
		\[
		\frac{n}{2} \leq \boxicity(\Gamma(R)) \leq \thdim(\Gamma(R)) \leq n,
		\]
		where $n$ denotes the number of minimal prime ideals of $R$, or equivalently, the chromatic number of $\Gamma(R)$.
	\end{theorem}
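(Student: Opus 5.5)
A finite reduced commutative ring is Artinian and reduced, hence a finite product of fields, $R\cong F_1\times\cdots\times F_n$, where $n$ is the number of minimal primes. The plan is to work entirely in this product picture. A nonzero element $r=(r_1,\dots,r_n)$ is a zero divisor iff its support $\sigma(r)=\{i: r_i\neq 0\}$ is a proper nonempty subset of $[n]$. Moreover $rs=0$ iff $\sigma(r)\cap\sigma(s)=\emptyset$. Hence $\Gamma(R)$ is obtained from the disjointness graph on proper nonempty subsets of $[n]$ by blowing each subset $A$ up into $\prod_{i\in A}(|F_i|-1)$ vertices. These copies form an independent set (they are a clique only if $A=\emptyset$, which is excluded), and they all have the same neighbourhood. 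The middle inequality $\boxicity\le\thdim$ was already noted, since threshold graphs are interval graphs. The chromatic number claim also follows: the vertices supported on singletons $\{i\}$ form a clique of size $n$, and colouring $r$ by $\min\sigma(r)$ is proper, because adjacent vertices have disjoint supports.

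\textbf{Upper bound $\thdim\le n$.} For each $j\in[n]$ define $\alpha_j(r)=-1$ if $j\in\sigma(r)$ and $0$ otherwise, and take $S=-1$. Then $H_j=T(\alpha_j,-1)$ joins $r,s$ unless both supports contain $j$. Thus $E(H_j)\supseteq E(\Gamma(R))$, and a pair is an edge of every $H_j$ iff no $j$ lies in both supports, i.e.\ iff $rs=0$. Since this is exactly the adjacency of $\Gamma(R)$, it gives $n$ threshold graphs whose intersection is $\Gamma(R)$.

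\textbf{Lower bound $\boxicity\ge n/2$.} The plan is to find an induced subgraph of known boxicity, since boxicity is monotone under induced subgraphs (restrict the boxes). Assume $n\ge 2$ and let $m=\lfloor n/2\rfloor$. For $i\le m$ choose an element $a_i$ with support $\{2i-1,2i\}$, and an element $b_i$ with support $[n]\setminus\{2i-1,2i\}$; the latter is a proper nonempty subset when $n\ge 3$. We have $a_ia_j\neq0$ only when $i=j$, so distinct $a_i$ are adjacent, but $a_i$ is not a vertex of its own loop, so $\{a_i\}$ is a clique. Also $a_ib_j=0$ iff $i=j$, and $b_ib_j\ne0$, so $\{b_i\}$ is independent. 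This yields a clique $\{a_i\}$ plus an independent set $\{b_i\}$ joined by a perfect matching, which is the complement pattern of the classical Roberts cocktail-party obstruction.

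For a cleaner certificate, I would instead use $\{c_i,c'_i\}$ with supports $\{2i-1\}$ and $\{2i\}$ for $i\le m$. Any two of these with different indices have disjoint supports and are adjacent, while $c_ic'_i$ is also adjacent since $\{2i-1\}$ and $\{2i\}$ are disjoint. So that choice gives a clique, which does not work. The correct choice takes complements of pairs: let $d_i$ have support $[n]\setminus\{2i-1\}$ and $d'_i$ have support $[n]\setminus\{2i\}$. For $n\ge 3$ every two of these supports intersect except $d_i,d'_i$? Their union is $[n]$, but their intersection is nonempty when $n\ge3$, so they still meet. The fix is to use the singletons $e_j$ with support $\{j\}$ for all $j\le n$. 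These form a $K_n$, and each $e_j$ is nonadjacent to the vertex $f_j$ with support $[n]\setminus\{j\}$? No: $\{j\}$ and $[n]\setminus\{j\}$ are disjoint, so they are adjacent.

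The genuinely hard step is therefore exhibiting an induced copy of a graph whose boxicity is known to be at least $n/2$. I would try the following. Take $m=\lfloor n/2\rfloor$, and let $u_i,w_i$ have supports $P_i\cup\{\text{nothing}\}$... Concretely, let $P_i=\{2i-1,2i\}$, and let $u_i$ have support $\{2i-1\}$ and $w_i$ have support $\{2i\}\cup\bigcup_{k\ne i}P_k$. Then $u_i\sim w_i$, while $u_i\not\sim w_k$ for $k\ne i$, $u_i\sim u_k$, and $w_i\not\sim w_k$. If that pattern is not directly Roberts' $K_{2,\dots,2}$, I would fall back on the known fact that the disjointness graph on $2^{[n]}$ contains the complement of a perfect matching on $2m$ vertices, i.e.\ the cocktail party graph of boxicity $m$. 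This should come from pairing supports $A_i$ with complements $[n]\setminus A_i$ inside a structure where the remaining pairs meet, which I expect to be the main obstacle to get right. Granting that, Roberts' theorem gives $\boxicity(\Gamma(R))\ge m\ge n/2$ up to the floor, and the small cases are handled directly.
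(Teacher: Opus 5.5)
Your setup (product of fields, supports, blow-up of the disjointness graph), the chromatic-number remark and the upper bound $\thdim\le n$ are correct, and they match the paper, which writes $\Gamma(R)$ as the intersection of the $n$ coordinate threshold graphs $T(\mu_i,1)$; your weights $-1/0$ with threshold $-1$ are equivalent. The gap is the lower bound. You never exhibit an induced subgraph of boxicity about $n/2$; you only grant one.

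The two concrete families you do write down both induce the corona $K_m\circ K_1$: the $a_i,b_i$ when $n\ge 5$, and the $u_i,w_i$ in general. This is a clique $\{u_i\}$ with a pendant perfect matching to an independent set $\{w_i\}$. It is not the complement of a cocktail party graph, and its boxicity is at most $2$. To see this, place $w_j$ at the point $-j$ and let $u_i=[-i,0]$, so that $u_i$ meets $w_j$ iff $j\le i$. Then use the mirrored order in a second interval graph. So these families cannot certify anything that grows with $n$.

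The fallback also fails as a strategy, not just as written. Made concrete, it works: for $n\ge3$, supports $\{2i-1\}$ and $\{2i-1,2i\}$ with $i\le\lfloor n/2\rfloor$ induce $K_{2,\dots,2}$. But this only gives $\boxicity\ge\lfloor n/2\rfloor$, and for odd $n$ you need the integer $\lceil n/2\rceil$. A cleverer cocktail party cannot fix this. For $R=\mathbb{F}_2^n$ we have $\Gamma(R)=D_n$. The two vertices of a part must have intersecting, distinct supports, while supports in different parts are disjoint. Hence each part uses at least two private coordinates, giving at most $\lfloor n/2\rfloor$ parts; for $n=3$ you get one part but need boxicity $2$.

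An additional idea is required. One option is the paper's route:
\begin{itemize}
\item take one element per support to get an induced $D_n\cong\Gamma_E(R)$;
\item prove $\boxicity(D_n)\ge n-1$ by showing that each interval supergraph touches at most $n-1$ pairs and handles at most $2$;
\item conclude that each such supergraph deletes at most $n+1$ of the $n(n-1)$ non-edges $\{\{a\},\{a,b\}\}$.
\end{itemize}
A second option, closer to your plan, works for odd $n$. Replace one pair by a triple $T$ of coordinates; the six proper nonempty subsets of $T$ induce $D_3$, the net, which has boxicity $2$. Then additivity of boxicity under joins (\cref{lemma:disjoint_operation}) gives $(\lfloor n/2\rfloor-1)+2=\lceil n/2\rceil$.

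Finally, the small cases cannot be ``handled directly''. For $R$ a field ($n=1$), and for $R\cong\mathbb{F}_2\times\mathbb{F}_2$ (where $\Gamma(R)=K_2$), the boxicity is $0<n/2$, consistent with \cref{thm:box_thd_reduced}. These cases must be excluded from the statement rather than checked.
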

	Chandran and Sahoo~\cite{2025_Chandran_ZeroDivisor} asked whether the lower bound is tight~\cite[Question 5.3]{2025_Chandran_ZeroDivisor}.

	Our first main result characterizes the boxicity of this class of zero divisors, giving a negative answer to Chandran and Sahoo's question.
	We remark that bounds of $n-1\le \boxicity(\Gamma(R))\le \thdim(\Gamma(R))\le n$ have been independently obtained in \cite{2026_Chandran_Compressed}.
	\begin{restatable}{theorem}{thmBoxThdReduced}\label{thm:box_thd_reduced}
	Let $R$ be a finite commutative reduced ring with non-zero identity, and let $n$ denote the number of minimal prime ideals of $R$.
	If $n=1$, then $\boxicity(\Gamma(R))=\thdim(\Gamma(R))=0$.
	For $n=2$, we have
	\[\boxicity(\Gamma(R))=\thdim(\Gamma(R))=\begin{cases}
		0 & \text{both minimal prime ideals are lonely}\\
		1 & \text{exactly one minimal prime ideal is lonely}\\
		2 & \text{neither is lonely}
	\end{cases}.
	\]
	For $n\ge 3$, we have
	\[\boxicity(\Gamma(R))=\thdim(\Gamma(R))=\begin{cases}
		n-1 & \text{there exists } i\in[n] \text{ s.t. $M_i$ is lonely}\\
		n & \text{otherwise}
	\end{cases}.
	\]
\end{restatable}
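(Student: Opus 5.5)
The plan is to use the structure theorem for finite reduced rings, $R\cong F_1\times\cdots\times F_n$ with $F_i$ finite fields, where the minimal prime $M_i$ is the kernel of the $i$-th projection. I read ``$M_i$ is lonely'' as $|F_i|=2$; I will confirm this against the paper's definition. Each nonzero zero divisor $r$ has a support $\sigma(r)=\{i: r_i\neq 0\}$, which is a nonempty proper subset of $[n]$, and $rs=0$ holds iff $\sigma(r)\cap\sigma(s)=\emptyset$. Hence $\Gamma(R)$ is the disjointness graph on nonempty proper subsets of $[n]$. In it, each subset $S$ is blown up into an independent set of $\prod_{i\in S}(|F_i|-1)$ pairwise false twins. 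In particular, the class of $S$ is a single vertex iff every $M_i$ with $i\in S$ is lonely. The cases $n=1$ (no edges, so both parameters are $0$) and $n=2$ (a complete bipartite graph $K_{a,b}$, where $a,b$ are the numbers of nonzero elements of $F_1$ and $F_2$) are direct. $K_{a,b}$ is a complete graph iff $a=b=1$. It is a threshold graph (equivalently, an interval graph here) iff $\min(a,b)=1$. Otherwise it contains an induced $C_4$, so it has boxicity $2$, and $\thdim\le 2$ by the upper bound below.

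For the upper bounds, the $n$ threshold graphs are $H_i=T(\alpha_i,-1)$ with $\alpha_i(v)=-1$ if $i\in\sigma(v)$ and $0$ otherwise. Each $H_i$ forbids exactly the pairs that both contain $i$, so their intersection is $\Gamma(R)$. If $M_n$ is lonely, I want to save one graph by merging $H_n$ into the others. The pairs left uncovered after dropping $H_n$ are those with $\sigma(u)\cap\sigma(v)=\{n\}$. I would modify $H_1$ with weights $\alpha(v)=-2$ if $1\in\sigma(v)$, and otherwise a weight depending on whether $n\in\sigma(v)$ and on $|\sigma(v)|$, so that pairs meeting only in $n$ fall below the threshold. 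The lonely hypothesis should make the problematic vertex with support $[n]\setminus\{1\}$ unique, which allows it to be given an extreme weight. Getting this weight assignment right is a concrete finite check that I expect to succeed, but it is the delicate part of the construction.

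For the lower bounds I would use \cref{lemma:boxicity_interval_graphs}. In any interval supergraph, certain sets of non-edges of $\Gamma(R)$ cannot be realised simultaneously, because an interval graph contains no induced $C_4$ and no asteroidal triple. I will exhibit $n-1$ non-edges (respectively $n$ when nothing is lonely) such that no two of them can be non-edges of the same interval supergraph. The candidates are, for each $i$, a non-edge inside the complement class $[n]\setminus\{i\}$ (two twins, which exist when some $j\neq i$ is non-lonely), or a non-edge between $\{i\}$ and $[n]\setminus\{j\}$. For two such non-edges $xy$ and $x'y'$ from indices $i\neq j$, the plan is to find an induced $C_4$ or an asteroidal configuration on $x,y,x',y'$ together with the singletons, which prevents both from being missing in one interval graph. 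This forces at least $n-1$ (respectively $n$) interval graphs, and since $\boxicity\le\thdim$ the parameters coincide. I expect the main obstacle to be this lower bound: specifically, showing that when no $M_i$ is lonely, all $n$ chosen non-edges are pairwise incompatible, including the pair involving the twin classes of the complements.
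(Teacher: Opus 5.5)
\textbf{Verdict.} Your handling of $n=1$ and $n=2$, the identification of loneliness with $|F_i|=2$, and the $n$-graph upper bound are correct. Both halves of the case $n\ge 3$ with a lonely $M_i$, however, have genuine gaps.

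\textbf{Lower bound.} Pairwise incompatible non-edges cannot give $n-1$ in general. Let $K$ be the complete graph on $Z(R)\setminus\{0\}$.
If two non-edges $e,f$ of $\Gamma(R)$ share a vertex, then $K-\{e,f\}$ is an interval supergraph of $\Gamma(R)$ missing both.
If $e,f$ are disjoint and some cross pair $g$ is also a non-edge, then $K-\{e,f,g\}$ is such a supergraph: the four endpoints induce a $P_4$ and all other vertices are universal.
So asteroidal triples play no role. Two non-edges are incompatible exactly when they span an induced $C_4$, and $k$ pairwise incompatible non-edges form an induced $k$-fold join of $\overline{K_2}$.
Each part $\{x,y\}$ has $\sigma(x)\cap\sigma(y)\neq\emptyset$, the spans $\sigma(x)\cup\sigma(y)$ of different parts are disjoint, and a span of size $1$ needs a non-lonely index. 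Hence $k\le |N|+\lfloor |L|/2\rfloor$, where $L$ and $N$ are the lonely and non-lonely indices. This is less than $n-1$ as soon as $|L|\ge 3$. For $R=\mathbb{F}_2^n$ you only recover the old bound $\lfloor n/2\rfloor$, and for $\mathbb{F}_2^3$ you get $1<2$.

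Your specific candidates are not even pairwise incompatible. Twins of supports $[n]\setminus\{i\}$ and $[n]\setminus\{j\}$, or pairs $\{\{i\},[n]\setminus\{j\}\}$, have intersecting supports across, so their cross pairs are non-edges.
When nothing is lonely, take instead two elements of support $\{i\}$ for each $i$. They induce the $n$-fold join of $\overline{K_2}$, and \cref{lemma:disjoint_operation} gives $\boxicity\ge n$.
When something is lonely you need a counting argument, as in the paper. Restrict to the induced subgraph $\Gamma_E(R)\cong D_n$ and look at the $n(n-1)$ non-edges $\{\{a\},\{a,b\}\}$. For $n\ge 4$, a single interval supergraph touches at most $n-1$ pairs and handles at most $2$ (\cref{lemma:pairwise_intersecting_pairs,lemma:handle_at_most_2}), so it deletes at most $n+1$ of them. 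This forces $d\ge n(n-1)/(n+1)>n-2$; the case $D_3$ is handled directly.

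\textbf{Upper bound.} Absorbing $H_n$ into $H_1$ alone cannot work. With $H_2,\dots,H_{n-1}$ unchanged, $H_1$ must delete every pair whose supports meet in a nonempty subset of $\{1,n\}$, while keeping all pairs with disjoint supports.
For $n\ge 4$, take $a,b,c,d$ with supports $\{n\},\{2,n\},\{1\},\{1,3\}$. Then $ab$ and $cd$ must be non-edges of $H_1$, yet all four cross pairs are edges of $\Gamma(R)$. So $H_1$ contains an induced $C_4$ and is not threshold, not even interval.
For $n=3$ the same happens whenever $|F_1|\ge 3$, using two elements of support $\{1\}$ as $c,d$. For $\mathbb{F}_3\times\mathbb{F}_3\times\mathbb{F}_2$, no choice of absorbing index works.
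Also, the element that loneliness of $M_n$ makes unique is the one of support $\{n\}$, not $[n]\setminus\{1\}$.

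The fix is to distribute the work over all $n-1$ graphs, as in \cref{lemma:box_th_1_2}. For $i<n$, let $H_i$ have threshold $1$ and give $v$ weight $1,\tfrac23,\tfrac13,\tfrac16$ according as $\sigma(v)$ contains neither of $i,n$, only $n$, only $i$, or both.
Each $H_i$ contains $\Gamma(R)$. It deletes all pairs sharing $i$, and also all pairs sharing $n$ in which one member contains $i$.
Two distinct elements meeting only in $n$ cannot both have support $\{n\}$. So some $i<n$ lies in one of the supports, and $H_i$ deletes the pair.
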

	Given the family $\{M_1, \ldots, M_n\}$ of minimal prime ideals of $R$, it is known that for every $j \in [n]$, $\bigcap_{i\in[n], \ i \neq j} M_i \setminus M_j$ is non-empty (see \cref{lemma:mu_surjective}).
	We call the minimal prime ideal $M_j$ \emph{lonely} when this set is a singleton, and non-lonely otherwise.
	
	For intuition, consider the reduced ring $R = \mathbb{Z}/30\mathbb{Z}$, whose minimal prime ideals are generated by
	$2$, $3$, and $5$.
	The prime ideal $2\mathbb{Z}/30\mathbb{Z}$ is lonely: $15$ is the unique element of
	$(3\mathbb{Z}/30\mathbb{Z} \cap 5\mathbb{Z}/30\mathbb{Z}) \setminus 2\mathbb{Z}/30\mathbb{Z}$.
	The prime ideal $3\mathbb{Z}/30\mathbb{Z}$, on the other hand, is not lonely: $(2\mathbb{Z}/30\mathbb{Z} \cap
	5\mathbb{Z}/30\mathbb{Z}) \setminus 3\mathbb{Z}/30\mathbb{Z} = \{10, 20\}$ contains two elements.
	Crucially, $10$ and $20$ are non-adjacent in $\Gamma(R)$, yet have the same set of zero divisors (i.e.\ the same neighborhood).
	Such pairs of `twin' non-adjacent vertices were used by Kavaskar~\cite{2025_Kavaskar} in an attempted proof of a lower bound on the boxicity of
	zero divisor graphs of reduced rings.
	As pointed out by Chandran and Sahoo~\cite{2025_Chandran_ZeroDivisor}, this proof was incorrect.
	
	Besides the notion of loneliness, our proof of \cref{thm:box_thd_reduced} develops on a connection
	between reduced zero divisor graphs and some combinatorial graph classes.
	To this end, it is helpful to consider the \emph{reduced zero divisor graph} $\Gamma_E(R)$.
	\begin{definition}
		Let $G$ be a graph. For $x,y \in V(G)$, write $x \sim y$ if they have the same sets of neighbors in $G$.
		This is an equivalence relation on $V(G)$, and each equivalence class induces an independent set in $G$.
		The \emph{reduced graph} $G_E$ of $G$ is the graph whose vertex set is the set of equivalence classes of $\sim$, with two distinct classes $[x]$ and $[y]$ being adjacent in $G_E$ if and only if $x$ and $y$ are adjacent in $G$.
	\end{definition}
	We denote the reduced graph of $\Gamma(R)$ by $\Gamma_E(R)$.
	Anderson and LaGrange's work on zero divisor graphs shows that, for a finite commutative
	reduced ring $R$ with $n$ minimal prime ideals, $\Gamma_E(R) \cong \Gamma(\mathbb{Z}_2^n)$
	\cite[Theorem~1.1]{ANDERSON20121626}.
	Under the natural identification of $\mathbb{Z}_2^n$ with $2^{[n]}$, two elements of
	$\mathbb{Z}_2^n \setminus \{\mathbf{0},\mathbf{1}\}$ multiply to $\mathbf{0}$ exactly when
	the corresponding subsets of $[n]$ are disjoint.
Hence, $\Gamma(\mathbb{Z}_2^n)$ is precisely the \emph{disjointness graph} $D_n$ on $2^{[n]}\setminus\{\emptyset,[n]\}$, i.e., the graph whose
	vertex set comprises the non-empty proper subsets of $[n]\coloneqq \{1,\dots,n\}$, and in which two subsets are adjacent if and only if they are disjoint.
	As part of our proof, we establish the boxicity of the disjointness graph (see \cref{subsec:proof_box_disjoint}).
	\begin{theorem}\label{theorem:boxicity_disjointness_graph}
	For $n\in \{1,2\}$, we have $\boxicity(D_n) = \thdim(D_n) = 0$. For $n\ge 3$, it holds that $\boxicity(D_n) = \thdim(D_n) = n-1$. 
	\end{theorem}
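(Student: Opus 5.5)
The cases $n\in\{1,2\}$ are immediate. For $n=1$ the vertex set of $D_1$ is empty. For $n=2$ it is $\{\{1\},\{2\}\}$, a single edge, so $D_2$ is complete. In both cases one takes zero threshold graphs, since the empty intersection of edge sets is read as the complete graph. For $n\ge 3$, since $\boxicity\le\thdim$, it suffices to prove $\thdim(D_n)\le n-1$ and $\boxicity(D_n)\ge n-1$.

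\emph{Upper bound.} For each $i\in[n]$ the threshold graph $T(\alpha_i,-1)$ with $\alpha_i(A)=-[i\in A]$ contains every disjoint pair. It misses exactly the pairs sharing $i$. Hence the $n$ graphs intersect to $D_n$, which gives $\thdim\le n$.

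To save one graph, I would keep $T(\alpha_i,-1)$ for $i\in[n-2]$. Then I would look for a single threshold graph $H$ that contains all disjoint pairs and kills every intersecting pair $A,B$ with $A\cap B\subseteq\{n-1,n\}$. The natural first try is a weighting $\alpha(A)=-(\text{weight of }A\cap\{n-1,n\})-\varepsilon|A|$, combining element weights with a size term; I expect the size term to be essential. Pairs with $|A|+|B|>n$ are never disjoint, so they can be thrown away freely. The delicate pairs are small sets sharing only $n-1$ or only $n$.

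If no single weighting works, I would change how the work is split. Instead of one graph per element, each of the $n-1$ graphs would use weights mixing membership of $i$ and of $n$, for example $\alpha_i(A)=-[i\in A]-[n\in A]$ with a suitably shifted threshold. Sets containing $n$ would be handled through their complements, exploiting the symmetry $A\mapsto[n]\setminus A$ between sets containing and avoiding $n$. Verifying one such scheme is a finite case check on $|A\cap\{i,n\}|$ and $|B\cap\{i,n\}|$.

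\emph{Lower bound.} I would apply \cref{lemma:boxicity_interval_graphs} to the induced subgraph on the singletons $\{i\}$ and co-singletons $\overline{\{i\}}=[n]\setminus\{i\}$. The singletons form a clique, and the co-singletons are pairwise non-adjacent when $n\ge3$. Moreover $\{i\}$ is adjacent to $\overline{\{j\}}$ if and only if $i=j$. The non-edges $\{i\}\overline{\{j\}}$ with $i\neq j$ must each be destroyed by some interval supergraph $I_t$. I would show that one interval supergraph cannot destroy two such non-edges that are arranged "crosswise", using the interval ordering together with the present edges $\{i\}\overline{\{i\}}$. Via a counting or pigeonhole argument this should force at least $n-1$ interval graphs, in the spirit of the bound $\boxicity\ge n/2$ for the complement of a perfect matching. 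Should the crosswise obstruction be too weak, I would enlarge the witness family with sets of size $2$ and $n-2$.

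The main obstacle I expect is the upper-bound construction with exactly $n-1$ threshold graphs. Naive element-wise weightings fail on pairs such as $\{n-1\}\cup\{1\}$ versus $\{n\}\cup\{2\}$, which must be kept, against $\{n-1\}$ versus $\{n-1,1\}$, which must be killed. So the merging step needs a genuinely non-local weight.
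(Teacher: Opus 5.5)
Your treatment of $n\in\{1,2\}$ and the reduction to $\thdim(D_n)\le n-1$ plus $\boxicity(D_n)\ge n-1$ are fine. For $n\ge 3$, however, neither bound is established, and your concrete plans fail.

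\textbf{Upper bound.} Keeping $T(\alpha_i,-1)$ for $i\in[n-2]$ and adding one threshold graph $H$ cannot work for any weighting. $H$ must contain the disjoint pairs $\{n-1\}\{n\}$, $\{n-1,1\}\{n\}$ and $\{n-1\}\{n,1\}$. It must also delete $\{n-1\}\{n-1,1\}$ and $\{n\}\{n,1\}$, since no $i\le n-2$ lies in these intersections. The remaining pair $\{n-1,1\}\{n,1\}$ is free in $H$. If $H$ contains it, $H$ induces a $C_4$ on these four vertices; otherwise it induces a $P_4$. Either way this contradicts \cref{theorem:forbidden_subgraphs_threshold}, so no weight, local or not, rescues this plan.

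Your fallback $\alpha_i(A)=-[i\in A]-[n\in A]$ fails for every choice of thresholds. The disjoint pair $\{i\},\{n\}$ forces threshold $\le -2$ in $H_i$, and then the non-edge $\{1\},\{1,2\}$ survives in every $H_i$.

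The missing idea is to weight $i$ twice as heavily as $n$. With $\beta_i(A)=-2[i\in A]-[n\in A]$ you get $D_n=\bigcap_{i\in[n-1]}T(\beta_i,-3)$:
\begin{itemize}
\item disjoint pairs have weight sum $\ge -3$;
\item a pair sharing some $i\le n-1$ has sum $\le -4$ in $H_i$;
\item if $A\cap B=\{n\}$ and $A\ne B$, some $i\in(A\cup B)\setminus\{n\}$ again gives sum $\le -4$ in $H_i$.
\end{itemize}
This is essentially the weighting $\rho_i$ of \cref{lemma:box_th_1_2}.

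\textbf{Lower bound.} Your witness cannot give more than $2$. In $D_n$ the co-singleton $\overline{\{j\}}$ is adjacent only to $\{j\}$. So the subgraph on singletons and co-singletons is $K_n$ with one pendant vertex at each clique vertex, and it has boxicity at most $2$:
\begin{itemize}
\item In one coordinate take $I_{\{k\}}=[k,n+1]$ and $I_{\overline{\{i\}}}=[i-\tfrac12,i]$. This deletes all non-edges among co-singletons and exactly the non-edges $\{k\}\overline{\{i\}}$ with $k>i$.
\item In a second coordinate repeat this with the index order reversed, which deletes those with $k<i$.
\end{itemize}
By \cref{lemma:induced_subgraph}, no argument confined to this family can exceed $2<n-1$ for $n\ge 4$. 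It only covers $n=3$, where $D_3$ is the net.

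Your crosswise obstruction is also false, even for interval supergraphs of all of $D_n$. Take $I_{\{i\}}=[0,2]$, $I_{\{j\}}=[1,3]$, $I_{\overline{\{i\}}}=\{0\}$, $I_{\overline{\{j\}}}=\{3\}$. Give every other non-co-singleton $[0,3]$ and every other co-singleton a point in $(1,2)$. This deletes both $\{i\}\overline{\{j\}}$ and $\{j\}\overline{\{i\}}$.

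The paper instead counts the $n(n-1)$ non-edges $\{a\}\{a,b\}$:
\begin{itemize}
\item Suppose an interval supergraph makes $I_{\{a\}}$ and $I_{\{a,b\}}$ disjoint, and let $\{c,d\}$ be disjoint from $\{a,b\}$. Then $I_{\{c\}}$, $I_{\{d\}}$ and $I_{\{c,d\}}$ all meet both of these intervals, so they span the gap between them and pairwise intersect.
\item Hence the pairs $\{a,b\}$ for which one graph deletes some $\{x\}\{a,b\}$ form an intersecting family, of size at most $n-1$ for $n\ge4$.
\item A left/right argument shows it deletes both such non-edges for at most $2$ pairs.
\end{itemize}
So each interval graph deletes at most $n+1$ of these non-edges, and $d\ge n(n-1)/(n+1)>n-2$. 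Your idea of adding $2$-sets points in this direction, but without this counting it is not yet an argument.
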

	%
	%
	\subsection{Quotients of principal ideal domains}
	Our second result addresses the boxicity of another class of zero divisor graphs.
	Improving on the bounds by Kavaskar~\cite{2025_Kavaskar}, Chandran and Sahoo~\cite{2025_Chandran_ZeroDivisor} established the boxicity and threshold dimension of the zero divisor graph of $\mathbb{Z}/M\mathbb{Z}$ for any $M\geq 2$.
	Kavaskar~\cite{2025_Kavaskar} showed that when $M = p^k$ for a prime $p$ and $k \ge 3$, we
	have $\boxicity(\Gamma(\mathbb{Z}/M\mathbb{Z})) = 1$, and when $k \le 2$,
	$\boxicity(\Gamma(\mathbb{Z}/M\mathbb{Z})) = 0$.
	It is easy to check that $\thdim(\Gamma(\mathbb{Z}/M\mathbb{Z}))$ agrees with $\boxicity(\Gamma(\mathbb{Z}/M\mathbb{Z}))$
	in both cases.
	Therefore, we only need to consider the case when $M$ has at least two prime divisors, for
	which Chandran and Sahoo obtained the following result.
	\begin{theorem}\label{theorem:box_ZnZ}
		Let $M = \prod_{i \in [n]} p_i^{m_i}$ where $m_i \geq 1$, $n \geq 2$, be the prime factorization of $M$.
		Then
		\[ n-1 = \boxicity(\Gamma(\mathbb{Z}/M\mathbb{Z})) \leq \thdim(\Gamma(\mathbb{Z}/M\mathbb{Z})) \leq n, \ \text{if} \ M\equiv2 \pmod{4} \ \text{and} \ m_i \leq 2 \ \text{for} \ i \in [n],\]
		and 
		\[ \boxicity(\Gamma(\mathbb{Z}/M\mathbb{Z})) = \thdim(\Gamma(\mathbb{Z}/M\mathbb{Z})) = n, \ \text{otherwise}.\]
	\end{theorem}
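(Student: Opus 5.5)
We reduce $\Gamma(\mathbb{Z}/M\mathbb{Z})$ to a combinatorial graph on exponent vectors, bound boxicity from below by counting forced non-edges, and bound it from above by explicit threshold graphs, one per prime.

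\emph{Model.} A non-zero zero divisor $x$ of $\mathbb{Z}/M\mathbb{Z}$ has a well-defined exponent vector $a(x)=(a_1,\dots,a_n)$ with $a_i=\min(v_{p_i}(x),m_i)$. We have $a(x)\neq(0,\dots,0)$ because $x$ is a zero divisor, and $a(x)\neq(m_1,\dots,m_n)$ because $x\neq 0$. Two elements $x,y$ are adjacent exactly when $a_i(x)+a_i(y)\ge m_i$ for all $i$. Vertices with equal vectors are twins; they are non-adjacent unless $2a_i\ge m_i$ for all $i$. Twins do not change boxicity, except that each twin class is a clique or an independent set, and that only matters for the lower bound. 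So we work with the ``integral covering graph'' on the vectors $\prod_i\{0,\dots,m_i\}\setminus\{\mathbf 0,\mathbf m\}$.

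\emph{Upper bound $n$.} For each $i$, take the threshold graph $H_i=T(\alpha_i,m_i)$ with $\alpha_i(x)=a_i(x)$. Two vertices of a twin class that should be adjacent satisfy $2a_i\ge m_i$ for every $i$, so they are correctly adjacent in every $H_i$. Hence $E(\Gamma)=\bigcap_i E(H_i)$, which gives $\boxicity\le\thdim\le n$ in all cases.

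\emph{Lower bounds.} I would use the standard tool: if a set of non-edges $\{x_j y_j\}$ is such that no interval supergraph can omit two of them simultaneously, then the boxicity is at least the number of such non-edges. The obstruction is that if $x_1,y_1,x_2,y_2$ satisfy $x_1x_2, y_1y_2\in E$ or similar crossing edges, then an interval graph omitting both $x_1y_1$ and $x_2y_2$ contains an induced $C_4$.

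For $i\in[n]$, pick $x_i$ with $a(x_i)=\mathbf m-e_i$, i.e.\ $p_i^{m_i-1}\prod_{j\ne i}p_j^{m_j}$. Pair it with a non-neighbour $y_i$ that is adjacent to all $x_j$ and $y_j$ for $j\ne i$. For the bound $n$, the natural choice is $y_i$ with $a(y_i)_i=0$ and $a(y_i)_j=m_j$ for $j\neq i$. Its only problem is $y_i$ versus $y_j$: these vectors sum to at least $m$ in every coordinate when $n\ge2$, so they are adjacent. But $y_i$ might equal $x_i$, or the non-edge might vanish, when $m_i=1$ and $p_i=2$; this is exactly where the exceptional case enters. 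The non-edge $x_iy_i$ needs $a(x_i)_i+a(y_i)_i<m_i$, i.e.\ $m_i-1<m_i$, which holds. So with these choices $\{x_iy_i\}$ gives $n$ pairwise ``incompatible'' non-edges. That argument would yield $n$ always, so something is wrong: the incompatibility needs $x_i\sim x_j$, and $x_i,x_j$ sum to $2m-e_i-e_j$, which is fine. Likely the catch is that $y_i$ must be distinct from other chosen vertices and non-adjacent ones must be a genuine edge. When $m_i\ge2$ we can also use self-twin pairs, namely two distinct elements with the same vector $\mathbf m-e_i$ whose sum $2m_i-2\ge m_i$ makes them adjacent. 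When $m_i=1$ and $p_i$ is odd, a vector with $a_i=0$ has at least two representatives (units times), and we use two non-adjacent twins. So the final count comes from a careful choice per coordinate, and it fails only in the stated case $M\equiv2\pmod4$ with all $m_i\le2$. There one coordinate's class is a singleton, and we obtain only $n-1$ together with a matching $(n-1)$-box representation.

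\emph{Main obstacle.} The delicate part is this exceptional case: constructing an $(n-1)$-dimensional interval representation. The prime $2$ with $m=1$ gives a lonely-type coordinate, which should be absorbed into another coordinate's interval graph, mirroring \cref{theorem:boxicity_disjointness_graph}. The matching lower bound $n-1$ should come from $n-1$ incompatible non-edges built as above. I would first verify both directions on $\mathbb{Z}/6$ and $\mathbb{Z}/30$, then generalize the merged-interval construction.
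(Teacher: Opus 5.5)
Your framework is the right one, and parts of it are sound:
\begin{itemize}
\item The upper bound $\thdim\le n$ via the threshold graphs $T(\alpha_i,m_i)$ is correct.
\item The lower-bound tool is correct. Non-edges $x_iy_i$ with \emph{all four} cross pairs adjacent form an induced join of copies of $\overline{K_2}$, so no interval supergraph can drop two of them without containing an induced $C_4$. With only some cross edges you would merely get $P_4$ or $2K_2$.
\item Your per-coordinate pairs do give $\boxicity\ge n$ when $M$ is odd or $4\mid M$. These are vector $\bm{m}-e_i$ against vector $\bm{m}-m_ie_i$, or two distinct unit multiples of $\prod_{j\ne i}p_j^{m_j}$ when $m_i=1$ and $p_i$ is odd. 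So nothing was ``wrong'' there.
\item Since only one prime equals $2$, the same pairs for the odd primes give $\boxicity\ge n-1$ in the exceptional case. For $\mathbb{Z}$ this is a legitimate shortcut around the counting argument for $D_n$ in \cref{lemma:lower_bound_n_minus_1}, which is only needed for general PIDs where several $p_i$ may have $|R/p_iR|=2$.
\end{itemize}
However, two pieces of the proof are missing.

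\textbf{First gap.} The ``otherwise'' case includes $M\equiv 2\pmod 4$ with some $m_j\ge 3$, e.g.\ $M=54$. There you still need $\boxicity\ge n$, and the per-coordinate choice breaks at the prime $2$. The vector $\bm{m}-e_i$ with $p_i=2$ has the single representative $M/2$. Changing the prime-$2$ pair alone cannot fix this: with your pair $\{18,2\}$ for $p=3$, the only neighbour of $2$ in $\Gamma(\mathbb{Z}/54\mathbb{Z})$ is $27$. So your assertion that things fail ``only in the stated case'' is unproven.

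The fix, as in \cref{lemma:pid_some_mi_large}, borrows from coordinate $j$:
\begin{itemize}
\item For $i\ne j$, take the two vectors with $0$ at $i$, full at every $\ell\notin\{i,j\}$, and $m_j-1$, respectively $m_j$, at $j$.
\item For $j$, take the distinct elements $\pi(p_j\prod_{\ell\ne j}p_\ell^{m_\ell})$ and $\pi((p_j+p_j^2)\prod_{\ell\ne j}p_\ell^{m_\ell})$. They are non-adjacent because $1+1<m_j$.
\item The cross adjacencies use $(m_j-1)+1\ge m_j$ and $2(m_j-1)\ge m_j$.
\end{itemize}
For $M=54$ this gives the induced $C_4$ on $9,6,27,24$. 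The adjacent twins of vector $\bm{m}-e_i$ you mention supply no non-edge and do not help.

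\textbf{Second gap.} This is the heart of the statement: you give no construction for $\boxicity\le n-1$ in the exceptional case. You also cannot get it from the integral covering graph, because non-adjacent twins matter for upper bounds too. For example, $D(2,1)\cong P_4$ has boxicity $1$, while $\Gamma(\mathbb{Z}/12\mathbb{Z})$ contains the induced $4$-cycle $3,4,9,8$ and has boxicity $2$. So the representation must be built on $\Gamma$ itself and must separate such twins.

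The missing idea is \cref{lemma:box_th_1_2}. Let coordinate $n$ belong to the prime $2$, so $m_n=1$, and let each $H_i$, $i\in[n-1]$, depend only on $(\alpha_i,\alpha_n)$:
\begin{itemize}
\item For $m_i=1$, take the threshold graph with weights $1,\tfrac23,\tfrac13,\tfrac16$ on $(1,1),(1,0),(0,1),(0,0)$ and threshold $1$.
\item For $m_i=2$, place the classes $(0,0)$, $(1,0)$, $(0,1)$ as pairwise disjoint intervals inside $(0,1)$, $(1,2)$, $(3,4)$ respectively. Send $(2,0)\mapsto[2,4]$, $(1,1)\mapsto[1,3]$ and $(2,1)\mapsto[0,4]$.
\end{itemize}
Each $H_i$ contains $\Gamma$ and deletes every non-edge violating coordinate $i$. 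It also deletes every non-edge with $\alpha_n(v)=\alpha_n(w)=0$ in which one endpoint is not full in coordinate $i$. A non-edge surviving all $H_i$ would therefore join two distinct elements with vector $(m_1,\dots,m_{n-1},0)$. But $M/2$ is the only such element, and this uniqueness is exactly where $M\equiv 2\pmod 4$ is used.
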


	As a common generalization of the rings $\mathbb{Z}/M\mathbb{Z}$ studied above, we also consider zero divisor graphs of quotient rings of principal ideal domains (PIDs).
	Let $R$ be a PID and let $I = gR \subsetneq R$ be a proper, non-prime ideal such that the quotient ring $Q \coloneqq R/I$ is
	finite. We characterize the boxicity and threshold dimension of $Q$ based on the prime factorization of $g$.
	When $R = \mathbb{Z}$, we recover (and extend) prior results for $\mathbb{Z}/M\mathbb{Z}$.
	Write the factorization of $g$ as
	$g = u \cdot \prod_{i=1}^n p_i^{m_i}$, where $u$ is a unit, $p_1,\ldots,p_n$ are pairwise
	non-equivalent primes, and $m_1,\ldots,m_n \ge 1$; since $I$ is proper and not prime, we have
	$n \ge 1$ and $\sum_{i=1}^n m_i \ge 2$.

	We determine $\boxicity(\Gamma(Q))$ and $\thdim(\Gamma(Q))$ in terms of $n$, the exponents $m_1,\ldots,m_n$, and the number of indices $i$ with $|R/p_iR|=2$ (see \cref{lemma:n_eq_1,theorem:pid_boxicity,theorem:pid_th}).
	
	As for reduced rings, our intuition comes from a combinatorial study of the graph class.
	We remark that, regardless of the primes that appear in the factorization of $g$, the main focus is on the exponents.
	Two elements $a,b$ have product zero if and only if, for each index $i \in [n]$, their exponent sums are at least $m_i$.
	This inspired the definition of the \emph{integral covering graph} $D(\bm{m})$, for $\bm{m}\in\mathbb{N}_{\geq 1}^n$: its vertex set is $\prod_{i=1}^n [m_i]_0 \setminus \{\bm{0},\bm{m}\}$, with two distinct vertices adjacent if and only if their coordinates sum to at least $m_i$ in every coordinate $i$ (\cref{def:integral_covering_graph}).
	We determine the boxicity of $D(\bm{m})$ for any $\bm{m} \in \mathbb{N}_{\geq 1}^n$ (\cref{theorem:boxicity_integral_covering,theorem:thdim_integral_covering,lemma:integral_covering_special_cases}).

	Exploiting a connection between the boxicities of $\Gamma_E(Q)$ and $D(\bm{m})$ (see \cref{cor:reduced_zero_divisor_graph_D_m,cor:reduced_zero_divisor_graph_D_m_n_2,cor:reduced_zero_divisor_graph_D_m_1_1,thm:boxicity_two_equiv_relations}) and handling the case $n=1$ separately (\cref{lemma:n_eq_1_reduced}) answers the following question of Chandran and Sahoo, even for general finite quotients of pids.
	\begin{question}[{\cite[Question~5.1]{2025_Chandran_ZeroDivisor}}]
		What is the exact value of $\boxicity(\Gamma_E(\mathbb{Z}_N))$, and when does
		$\boxicity(\Gamma_E(\mathbb{Z}_N)) = \boxicity(\Gamma(\mathbb{Z}_N))$?
	\end{question}
	While we were completing this work, Chandran and Sahoo~\cite{2026_Chandran_Compressed} independently answered the same question for $\mathbb{Z}_N$.

	
	\section{Preliminaries}
	
	\subsection{Graphs}
	All graphs in this paper are finite and simple.
	We use standard graph-theoretic notation throughout and briefly recall here the notation used most frequently.
	
	Let $G = (V, E)$ be a graph.
	For a vertex $v \in V$, the sets $\delta_G(v) \subseteq E$ and $N_G(v) \subseteq V$ are defined to be the edges incident to $v$ and vertices adjacent to $v$, respectively.
	The \emph{closed neighbourhood} of $v$ is defined as $N_G[v] = N_G(v) \cup \{v\}$.
	The \emph{degree} of a vertex $v$ is denoted by $d_G(v)$, and satisfies $d_G(v) = |\delta_G(v)| = |N_G(v)|$.
	A vertex $v$ is \emph{isolated} in $G$ if $\delta_G(v) = \emptyset$.
	A \emph{subgraph} of $G$ is a graph $(U,F)$ satisfying $U \subseteq V$ and $F \subseteq E$.
	A subgraph $(U,F)$ is \emph{induced} in $G$ if for all $u,v \in U$, $uv \in F$ if and only if $uv \in E$.
	The induced subgraph of $G$ on a vertex set $U \subseteq V$ is denoted by $G[U]$.
	
	\subsection{Rings, zero divisors and minimal ideals}
	In this paper, unless stated otherwise, a \emph{ring} will always be a commutative, finite, non-zero ring with $1$. While we recap some basic definitions and statements that we need in order to formulate our results, for a broader introduction to commutative algebra, we refer the reader to standard textbooks such as \cite{bourbaki1972commutative}.
	\begin{definition}
		Let $R$ be a ring. We call $r\in R$ a \emph{zero divisor} if there exists $s\in R\setminus \{0\}$ such that $r\cdot s=0$. We denote the set of zero divisors of $R$ by $Z(R)$.
	\end{definition}
	The object we are interested in this paper is the \emph{zero divisor graph} of a ring.
	\begin{definition}
		Let $R$ be a ring. Its \emph{zero divisor graph} $\Gamma(R)$ is the simple graph on the vertex set $Z(R)\setminus\{0\}$ in which two elements $r,s\in Z(R)\setminus \{0\}$ are connected by an edge if and only if $r\cdot s=0$.
	\end{definition}
	In this section, we will be concerned with the zero divisor graphs of \emph{reduced rings}, which do not feature \emph{nilpotent} zero divisors (except for $0$).
	\begin{definition}
		Let $R$ be a ring. We call $r\in R$ \emph{nilpotent} if there exists $n\in\mathbb{N}_{>0}$ such that $r^n=0$. We denote by $N(R)\coloneqq \{r\in R\colon r \text{ is nilpotent}\}$ the \emph{nilradical} of $R$.
		We call $R$ \emph{reduced} if $N(R)=\{0\}$.
	\end{definition}
	
	It is well-known that in reduced rings, the set of zero divisors can be characterized as the union of all minimal prime ideals (see \cref{cor:union_of_minimal_ideals_reduced}).
	To formalize this relation, we require the following algebraic preliminaries.
	\begin{definition}
		Let $R$ be a ring.
		We call $I\subseteq R$ an \emph{ideal} in $R$ if $(I,+)$ forms a subgroup of $(R,+)$ such that $r\cdot i\in I$ for every $r\in R$ and $i\in I$. If, in addition, for every $r,s\in R$ with $r\cdot s\in I$, we have $r\in I$ or $s\in I$, we call $I$ a \emph{prime ideal}. We denote by $\Min(R)$ the collection of $\subseteq$-minimal prime ideals of $R$.
	\end{definition}
	We state the following property of prime ideals.
	\begin{lemma}[follows from \cite{bourbaki1972commutative} Chapter 2 \S 1 Proposition 1]\label{lemma:prime_ideal_containment}
		Let $R$ be a ring, let $P$ be a prime ideal of $R$, let $k\in\mathbb{N}$ and $(I_j)_{j\in [k]}$ be ideals of $R$ such that $\bigcap_{j\in [k]} I_j\subseteq P$. Then there is $j\in[k]$ with $I_j\subseteq P$.
	\end{lemma}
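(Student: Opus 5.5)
We argue by induction on $k$; the case $k=1$ is trivial, since then $I_1=\bigcap_{j\in[1]}I_j\subseteq P$. The base of the real argument is $k=2$, and the general case follows from it because a finite intersection of ideals is again an ideal. Concretely, suppose the claim holds for $k-1$ ideals and let $\bigcap_{j\in[k]}I_j\subseteq P$. Put $J\coloneqq\bigcap_{j\in[k-1]}I_j$, which is an ideal with $J\cap I_k\subseteq P$. By the case of two ideals, either $I_k\subseteq P$, or $J\subseteq P$ and the induction hypothesis gives some $j\in[k-1]$ with $I_j\subseteq P$.

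The core step is two ideals $I,J$ with $I\cap J\subseteq P$. We argue by contradiction and assume $I\not\subseteq P$ and $J\not\subseteq P$. Pick $a\in I\setminus P$ and $b\in J\setminus P$. Since $I$ and $J$ are ideals, the product $ab$ lies in $I$ (as $a\in I$) and in $J$ (as $b\in J$), using commutativity of $R$. Hence $ab\in I\cap J\subseteq P$. Primality of $P$ then forces $a\in P$ or $b\in P$, which contradicts the choice of $a$ and $b$. Equivalently, one can observe $I_1I_2\cdots I_k\subseteq\bigcap_j I_j\subseteq P$ and apply the product form of primality directly: choosing $a_j\in I_j\setminus P$ for every $j$ gives $\prod_j a_j\in P$, and iterated primality yields some $a_j\in P$, a contradiction. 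This avoids the induction altogether, and I would likely present this one-line version.

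No step is a genuine obstacle. The only points to check are that the product of elements from the various ideals lies in every $I_j$, which uses the absorption property of ideals together with commutativity, and that primality extends from two factors to $k$ factors, which is a trivial induction. If $k=0$ were allowed, the empty intersection is $R$, and $R\subseteq P$ contradicts $P$ being a proper prime ideal. So for the statement to be non-vacuous we read $k\ge1$, as intended.
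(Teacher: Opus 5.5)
Your proof is correct, including your remark that $k\ge 1$ must be read. The paper does not prove this lemma itself but cites Bourbaki. Your argument is the standard one behind that citation: choose $a_j\in I_j\setminus P$, note that $\prod_j a_j\in I_1\cdots I_k\subseteq\bigcap_j I_j\subseteq P$, and then primality of $P$ gives a contradiction.
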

	One can show that all elements of minimal prime ideals are zero divisors.
	
	\begin{lemma}[follows from \cite{bourbaki1972commutative} Chapter 2 \S 2 Proposition 12]\label{lemma:minimal_prime_ideals_consist_of_zero_divisors}
		Let $R$ be a ring and let $M\subseteq R$ be a minimal prime ideal. Then $M\subseteq Z(R)$.
	\end{lemma}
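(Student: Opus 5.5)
The statement to prove is \cref{lemma:minimal_prime_ideals_consist_of_zero_divisors}: every element of a minimal prime ideal $M$ of a finite ring $R$ is a zero divisor. I will give a self-contained argument in the finite setting rather than localize. Since $R$ is finite, it has only finitely many prime ideals. Moreover, every prime ideal $P$ of a finite ring is maximal: $R/P$ is a finite integral domain, hence a field. So the prime ideals of $R$ are exactly its maximal ideals $P_1,\dots,P_k$, and each of them is a minimal prime. The claim therefore reduces to showing that every element of each $P_j$ is a zero divisor.

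\textbf{Step 1: the nilradical.} In a finite ring, the nilradical is the intersection of all prime ideals. Hence $J\coloneqq P_1\cap\dots\cap P_k=N(R)$, which is finite and consists of nilpotent elements. Consequently $J^t=0$ for some $t$: the ideal $J$ is finitely generated by nilpotent elements, so a sufficiently large power of it vanishes. Note that \cref{lemma:prime_ideal_containment} tells us that no $P_j$ contains $\bigcap_{i\ne j}P_i$, because otherwise $P_j$ would contain some other maximal ideal $P_i$, a contradiction.

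\textbf{Step 2: decompose $R$.} Since the $P_j$ are pairwise comaximal, the powers $P_j^t$ are pairwise comaximal as well. By the Chinese remainder theorem,
\[
R\cong R/\bigl(P_1^t\cdots P_k^t\bigr)=\prod_j R/P_j^t,
\]
where we use $P_1^t\cdots P_k^t\subseteq (P_1\cdots P_k)^t\subseteq J^t=0$. Write $e_j$ for the idempotent corresponding to the $j$-th factor.

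\textbf{Step 3: conclude.} Take $x\in P_j$. Its component in $R/P_j^t$ is nilpotent, because the image of $P_j$ in the local ring $R/P_j^t$ is its nilpotent maximal ideal. So $x^s e_j=0$ for some $s\ge 1$ while $e_j\neq 0$. Choose $r$ minimal with $x^r e_j=0$; then $r\ge 1$, the element $y\coloneqq x^{r-1}e_j$ is non-zero, and $xy=0$. Hence $x\in Z(R)$. This also covers $x=0$, since $R$ is non-zero.

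The main obstacle is the identification $J^t=0$, that is, the nilradical being the intersection of the primes and being nilpotent. I would justify this by the standard argument: a non-nilpotent element $a$ avoids some prime, namely one maximal among ideals disjoint from $\{1,a,a^2,\dots\}$; and a finitely generated nil ideal is nilpotent. An alternative route avoids CRT entirely: since $R$ is finite, multiplication by $x$ is either injective, hence bijective so that $x$ is a unit, or non-injective, so that $x$ is a zero divisor. An element of a proper ideal $P_j$ cannot be a unit, so $x$ must be a zero divisor. This alternative is much shorter, and I would actually present it as the proof.
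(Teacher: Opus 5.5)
Your final two-line argument is correct. In a finite ring, multiplication by $x$ is either injective, hence bijective, so $x$ is a unit. Otherwise $x(a-b)=0$ for some $a\neq b$. A minimal prime is proper and so contains no units, which gives $M\subseteq Z(R)$.

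The paper gives no proof of its own and defers to Bourbaki, where the statement holds for arbitrary commutative rings. The standard argument localizes at $M$. Since $MR_M$ is the only prime of $R_M$, it is the nilradical of $R_M$. So for $x\in M$ there are $s\notin M$ and $n\ge 1$ with $sx^n=0$, and for the minimal such $n$ the element $sx^{n-1}\neq 0$ annihilates $x$.

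Your route uses finiteness essentially. That is legitimate, because the lemma sits under the paper's standing assumption that rings are finite. In exchange it proves more: $Z(R)=R\setminus R^\times$, so every proper ideal consists of zero divisors, and the minimality of $M$ is never used. The localization proof is the one that genuinely needs minimality and survives without finiteness. For example, in $\mathbb{Z}$ the non-minimal prime $2\mathbb{Z}$ is not contained in $Z(\mathbb{Z})=\{0\}$.

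Your CRT-based route is also correct. It is essentially the finite-ring version of the localization argument: the idempotent $e_j\notin P_j$ plays the role of $s$, and $e_jR\cong R/P_j^t\cong R_{P_j}$. It is much heavier than needed, so presenting the short argument is the right choice.
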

	The elements that appear in all minimal prime ideals are precisely the nilpotent elements. In particular, in a reduced ring, $0$ is the only ring element appearing in all minimal prime ideals.

	\begin{theorem}[\cite{bourbaki1972commutative} Chapter 2 \S 2 Proposition 13]\label{theorem:intersection_of_minimum_ideals}
		Let $R$ be a ring. Then $\bigcap \Min(R)=N(R)$.
	\end{theorem}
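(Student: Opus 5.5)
We prove the two inclusions $N(R)\subseteq\bigcap\Min(R)$ and $\bigcap\Min(R)\subseteq N(R)$ separately. Throughout we use that $R$ is finite, so every nonempty family of ideals of $R$ has both $\subseteq$-maximal and $\subseteq$-minimal members. This lets us avoid Zorn's lemma entirely.

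\emph{First inclusion.} Let $r\in N(R)$, say $r^k=0$, and let $P$ be any prime ideal. Then $r\cdot r^{k-1}=0\in P$. By primality, $r\in P$ or $r^{k-1}\in P$. Inducting on $k$ gives $r\in P$. Hence $N(R)$ lies in every prime ideal, in particular in every minimal one. We also need $\Min(R)\neq\emptyset$. This follows once a single prime ideal exists: any maximal ideal of the nonzero ring $R$ is prime, and among the finitely many prime ideals contained in it we may pick a $\subseteq$-minimal one.

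\emph{Second inclusion.} Let $r\notin N(R)$. Then $S\coloneqq\{1,r,r^2,\dots\}$ is multiplicatively closed and $0\notin S$. The family of ideals disjoint from $S$ is nonempty, since it contains $\{0\}$. Because $R$ is finite, we can choose an ideal $P$ that is $\subseteq$-maximal in this family.

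We check that $P$ is prime; this is the key step.
\begin{itemize}
\item $P\neq R$, since $1\in S$.
\item Suppose $a,b\notin P$. By maximality, $P+aR$ and $P+bR$ both meet $S$. So $r^i=p_1+as$ and $r^j=p_2+bt$ with $p_1,p_2\in P$ and $s,t\in R$.
\item Multiplying gives $r^{i+j}\in P+abR$.
\item If $ab\in P$, this would put $r^{i+j}\in P$, contradicting $P\cap S=\emptyset$. Hence $ab\notin P$, so $P$ is prime.
\end{itemize}

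Now among the finitely many prime ideals contained in $P$, choose a $\subseteq$-minimal one, $M$. Then $M$ is a minimal prime of $R$: any prime contained in $M$ is also contained in $P$, so it equals $M$ by the choice of $M$. Finally, $r\notin M$, because $M\subseteq P$ and $r\in S$ is disjoint from $P$. Therefore $r\notin\bigcap\Min(R)$, which gives the reverse inclusion.

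The main obstacle is the primality argument for an ideal maximal with respect to avoiding $S$. The remaining steps are routine consequences of finiteness.
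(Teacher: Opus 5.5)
Your proof is correct. The paper gives no argument of its own and only cites Bourbaki, where the result holds for arbitrary commutative rings. In that generality the standard proof has the same two steps as yours. It first produces a prime ideal avoiding the powers of a non-nilpotent $r$, using Zorn's lemma or by pulling back a maximal ideal of the localization at $\{r^n\}$. It then uses Zorn's lemma again to find a minimal prime below that prime.

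Your version keeps this outline but replaces both uses of Zorn's lemma by the finiteness of $R$, which makes the argument elementary and self-contained. This is legitimate under the paper's standing convention that rings are finite. It also covers everything the paper needs, since the theorem is only used, through \cref{cor:reduced_intersection_min_prime_ideals}, for the finite reduced rings in the section on reduced rings. The only loss is generality: your argument would not cover infinite rings such as the PID $R$ in the section on quotient rings, but the theorem is not used there.
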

	
	\begin{corollary}\label{cor:reduced_intersection_min_prime_ideals}
		If $R$ is a reduced ring, then $\bigcap \Min(R)=\{0\}$.
	\end{corollary}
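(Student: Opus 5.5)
This follows immediately from \cref{theorem:intersection_of_minimum_ideals} together with the definition of a reduced ring; no further algebraic input is required.

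By definition, $R$ being reduced means that its nilradical is trivial, that is, $N(R)=\{0\}$. Applying \cref{theorem:intersection_of_minimum_ideals} to $R$ gives $\bigcap \Min(R)=N(R)$. Substituting $N(R)=\{0\}$ then yields $\bigcap \Min(R)=\{0\}$, which is the claim.

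One point deserves a sanity check. The intersection $\bigcap \Min(R)$ is taken over a family that must be non-empty, since otherwise the empty intersection would be all of $R$ rather than $\{0\}$. The family is indeed non-empty: $R$ is a non-zero ring, so it has a maximal ideal, and that ideal is prime. Because $R$ is finite, its set of prime ideals is finite, so some prime ideal contained in this maximal one is $\subseteq$-minimal, and hence $\Min(R)\neq\emptyset$. This point is in any case already implicit in the cited result from \cite{bourbaki1972commutative}, so I do not expect any real obstacle here.
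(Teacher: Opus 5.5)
Your proposal is correct and matches the paper, which states this corollary without a separate proof as an immediate consequence of \cref{theorem:intersection_of_minimum_ideals} together with $N(R)=\{0\}$. Your check that $\Min(R)\neq\emptyset$ is valid; the paper leaves that point implicit.
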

	
	\cref{lemma:minimal_prime_ideals_consist_of_zero_divisors,cor:reduced_intersection_min_prime_ideals} imply the following characterization of the set of zero divisors of a reduced ring.
	
	\begin{corollary}\label{cor:union_of_minimal_ideals_reduced}
		Let $R$ be a reduced ring. Then $\bigcup \Min(R)=Z(R)$.
	\end{corollary}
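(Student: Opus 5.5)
We prove the two inclusions $Z(R)\subseteq\bigcup\Min(R)$ and $\bigcup\Min(R)\subseteq Z(R)$ separately. The second is immediate from \cref{lemma:minimal_prime_ideals_consist_of_zero_divisors}: every minimal prime ideal $M$ satisfies $M\subseteq Z(R)$, so their union lies in $Z(R)$ as well.

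For the inclusion $Z(R)\subseteq\bigcup\Min(R)$, take $r\in Z(R)$ and choose $s\in R\setminus\{0\}$ with $rs=0$. Since $R$ is reduced, \cref{cor:reduced_intersection_min_prime_ideals} gives $\bigcap\Min(R)=\{0\}$. Because $s\neq 0$, there is some $M\in\Min(R)$ with $s\notin M$. On the other hand, $rs=0\in M$, and $M$ is prime, so $r\in M$ or $s\in M$. As $s\notin M$, we conclude $r\in M\subseteq\bigcup\Min(R)$.

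One point to note is that $\Min(R)$ must be non-empty so that the intersection statement is meaningful. This holds because $R$ is a non-zero finite ring: it has a maximal ideal, which is prime, and by finiteness there is a $\subseteq$-minimal prime contained in it. This is already implicit in \cref{theorem:intersection_of_minimum_ideals}. Beyond this, the argument is a direct two-line combination of the cited results, and we do not anticipate any real obstacle.
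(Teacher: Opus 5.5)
Your proof is correct and matches the paper's argument exactly: \cref{lemma:minimal_prime_ideals_consist_of_zero_divisors} gives $\bigcup\Min(R)\subseteq Z(R)$, and for the reverse inclusion you pick $M\in\Min(R)$ with $s\notin M$ via \cref{cor:reduced_intersection_min_prime_ideals} and then use that $M$ is prime. Your added remark that $\Min(R)\neq\emptyset$ is also valid, though the cited results already cover it.
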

	\begin{proof}
		By \cref{lemma:minimal_prime_ideals_consist_of_zero_divisors}, we have $\bigcup \Min(R)\subseteq Z(R)$. Next, let $r\in Z(R)$. Then there is $s\in R\setminus\{0\}$ such that $r\cdot s=0$. By \cref{cor:reduced_intersection_min_prime_ideals}, there exists $M\in\Min(R)$ such that $s\notin M$. As $M$ is an ideal, we have $r\cdot s=0\in M$, and the fact that $M$ is prime yields $r\in M$.
	\end{proof}
	
	\subsection{Boxicity}
	Boxicity is monotone under taking induced subgraphs.
	\begin{lemma}\label{lemma:induced_subgraph}
		Let $G$ be a graph and let $H$ be an induced subgraph of $G$.
		Then
		\[
		\boxicity(H) \leq \boxicity(G).
		\]
	\end{lemma}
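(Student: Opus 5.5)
The statement is immediate from the geometric definition of boxicity, so I will argue directly with box representations rather than through \cref{lemma:boxicity_interval_graphs}. Let $d=\boxicity(G)$ and fix a family of axis-parallel boxes $(B_v)_{v\in V(G)}$ in $\mathbb{R}^d$ whose intersection graph is $G$, so that for distinct $u,v\in V(G)$ we have $uv\in E(G)$ if and only if $B_u\cap B_v\neq\emptyset$. Write $H=G[U]$ for some $U\subseteq V(G)$, which is possible because $H$ is induced.

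The plan is to restrict the representation: I claim $(B_u)_{u\in U}$ is a box representation of $H$ in $\mathbb{R}^d$. For distinct $u,w\in U$, the fact that $H$ is induced gives $uw\in E(H)$ if and only if $uw\in E(G)$. By the choice of the boxes, this holds if and only if $B_u\cap B_w\neq\emptyset$. Hence $H$ is the intersection graph of a family of $d$-dimensional boxes, and $\boxicity(H)\le d=\boxicity(G)$.

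There is no real obstacle here; the only point needing care is the degenerate case $d=0$, that is, when $G$ is complete (under the convention that $\mathbb{R}^0$ is a single point, so all boxes coincide and pairwise intersect). Then $H$ is complete too, and the restricted representation still works, so $\boxicity(H)=0$.

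Alternatively, one can argue through \cref{lemma:boxicity_interval_graphs}. Restrict each interval supergraph $I_j$ to $U$; induced subgraphs of interval graphs are interval graphs, and $E(H)=\bigcap_j E(I_j[U])$ because $H$ is induced. The same argument shows the analogous monotonicity for $\thdim$, using that induced subgraphs of threshold graphs are threshold graphs (simply restrict $\alpha$ to $U$).
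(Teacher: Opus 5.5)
Your proof is correct: the paper states this lemma as a standard fact without proof, and restricting a box representation of $G$ to the vertex set of $H$ is exactly the standard justification. Restricting the interval supergraphs of \cref{lemma:boxicity_interval_graphs} instead works equally well. Your closing remark on $\thdim$ is worth keeping, since the paper tacitly uses the same monotonicity for threshold dimension, e.g.\ in \cref{prop:lower_bound_via_D_n} and in the proof of \cref{theorem:partial_join}.
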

	We further recall that a graph has boxicity at most $d$ if and only if it can be written as the intersection of $d$ interval graphs $(I_j)_{j\in [d]}$ (see \cref{lemma:boxicity_interval_graphs}).
	An immediate consequence is the following observation.
	For every non-edge $f \in \binom{V(G)}{2} \setminus E(G)$, there exists an index $j \in [d]$ such that the interval graph $I_j$ does not contain $f$.
	In this case, we say that the interval graph $I_j$ \emph{deletes} the non-edge $f$.

	The \emph{join} of $n$ vertex-disjoint graphs $\Gamma_1,\Gamma_2,\dots,\Gamma_n$, denoted by $\bigvee_{i=1}^n \Gamma_i$, is the graph with vertex set
	\[
	V\Big(\bigvee_{i=1}^n \Gamma_i\Big) = \bigcup_{i=1}^n V(\Gamma_i)
	\]
	and edge set
	\[
	E\Big(\bigvee_{i=1}^n \Gamma_i\Big) = \bigcup_{i=1}^n E(\Gamma_i) \;\cup\; 
	\big\{\{v_i,v_j\} : v_i\in V(\Gamma_i),\ v_j\in V(\Gamma_j),\ 1\le i<j\le n\big\}.
	\]
	The next lemma shows that boxicity is additive under the join operation.
	
	\begin{lemma}[\cite{1983_Cozzens}]\label{lemma:disjoint_operation}
		Let $\Gamma_1,\Gamma_2,\dots,\Gamma_n$ be $n$ vertex-disjoint graphs.
		Then
		\[
		\boxicity\Big(\bigvee_{i=1}^n \Gamma_i\Big) = \sum_{i=1}^n \boxicity(\Gamma_i).
		\]
	\end{lemma}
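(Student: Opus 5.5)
The plan is to prove the two inequalities $\boxicity(\bigvee_i \Gamma_i)\le\sum_i\boxicity(\Gamma_i)$ and $\boxicity(\bigvee_i \Gamma_i)\ge\sum_i\boxicity(\Gamma_i)$ separately, using the characterization of \cref{lemma:boxicity_interval_graphs}. By induction on $n$ it suffices to treat $n=2$, since $\bigvee_{i=1}^n\Gamma_i=\bigl(\bigvee_{i=1}^{n-1}\Gamma_i\bigr)\vee\Gamma_n$. So write $G=\Gamma_1\vee\Gamma_2$, $V_1=V(\Gamma_1)$, $V_2=V(\Gamma_2)$, $d_1=\boxicity(\Gamma_1)$ and $d_2=\boxicity(\Gamma_2)$.

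For the upper bound, fix interval graphs $I_1,\dots,I_{d_1}$ on $V_1$ whose intersection is $\Gamma_1$. Extend each $I_j$ to $V_1\cup V_2$ by giving every vertex of $V_2$ one huge interval containing all intervals used for $V_1$. The extended graph is still an interval graph. Inside $V_1$ it agrees with $I_j$, it makes $V_2$ a clique, and it joins every vertex of $V_2$ to every vertex of $V_1$. Do the symmetric extension for the $d_2$ interval graphs representing $\Gamma_2$. Intersecting all $d_1+d_2$ graphs gives exactly $\Gamma_1$ on $V_1$, exactly $\Gamma_2$ on $V_2$, and all edges between $V_1$ and $V_2$, which is $G$. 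The degenerate case $d_i=0$, where $\Gamma_i$ is complete, is handled by the same construction with no factors from that side.

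The lower bound is the main obstacle. Take interval graphs $J_1,\dots,J_d$ with $\bigcap_j E(J_j)=E(G)$; we must show $d\ge d_1+d_2$. Each non-edge of $G$ lies inside $V_1$ or inside $V_2$. Call $J_j$ \emph{active for $\Gamma_1$} if it deletes some non-edge of $\Gamma_1$, and similarly for $\Gamma_2$. Restricting to $V_1$ gives induced subgraphs $J_j[V_1]$, which are interval graphs. The graph $\Gamma_1$ is the intersection of those $J_j[V_1]$ with $J_j$ active for $\Gamma_1$, since the inactive ones contain all pairs in $V_1$. Hence at least $d_1$ of the $J_j$ are active for $\Gamma_1$, and likewise at least $d_2$ are active for $\Gamma_2$.

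It therefore suffices to show that no single $J_j$ is active for both. Suppose $J_j$ deletes a non-edge $a a'$ in $V_1$ and a non-edge $b b'$ in $V_2$. Since $J_j\supseteq G$, it contains the four edges $ab$, $bb$-crossing edges $a'b'$, $ab'$ and $a'b$. Then $a,b,a',b'$ induce a chordless $4$-cycle $a\,b\,a'\,b'$ in $J_j$, because the two diagonals $aa'$ and $bb'$ are missing. This contradicts that interval graphs are chordal. So the active sets are disjoint, giving $d\ge d_1+d_2$. The induction then yields the claimed equality.
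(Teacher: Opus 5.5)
Your proof is correct and complete. The paper does not prove this lemma; it only cites Cozzens. Your argument is the standard one, and it matches the paper's own technique elsewhere. Your lower bound assigns to each block the set of factors that delete a non-edge inside it, then shows these sets are pairwise disjoint via a forbidden induced subgraph. That is the same scheme as the paper's proof of \cref{theorem:partial_join}, and your disjointness step is exactly part (a) of \cref{prop:C_4_and_d_partite_graph}.

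The comparison shows why the lemma needs a full join. For threshold graphs, $C_4$, $P_4$ and $2K_2$ are all forbidden, so the paper can relax the join to a partial join, where the crossing non-edges only need to form a matching. Interval graphs, however, may contain $P_4$ and $2K_2$. Your argument therefore genuinely needs all four crossing pairs $ab$, $ab'$, $a'b$, $a'b'$ to be edges, since only the induced $C_4$ is excluded. For example, the partial join of two copies of $\overline{K_2}$ with one missing crossing edge is $P_4$, whose boxicity is $1<2$.

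One cosmetic slip: in your last paragraph, ``$bb$-crossing edges'' should simply read ``the four crossing edges $ab$, $ab'$, $a'b$, $a'b'$''.
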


	We further state the following useful characterization of threshold graphs in terms of forbidden subgraphs.
	\begin{theorem}[\cite{1977_Chvatal}]\label{theorem:forbidden_subgraphs_threshold}
		Let $G$ be a graph.
		Then $G$ is a threshold graph if and only if $G$ has no induced subgraphs isomorphic to $C_4$, $P_4$, or $2K_2$ (see \cref{fig:forbidden_threshold}). 
	\end{theorem}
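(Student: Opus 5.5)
The plan is to prove the two directions separately. Necessity is a short averaging argument. Sufficiency goes by induction on $|V(G)|$, peeling off an isolated or dominating vertex.

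\emph{Necessity.} First note that an induced subgraph of a threshold graph is threshold: if $G=T(\alpha,S)$ and $U\subseteq V$, then $G[U]=T(\alpha|_U,S)$. So it suffices to show that $C_4$, $P_4$ and $2K_2$ are not threshold graphs. Each of them contains four distinct vertices $a,b,c,d$ such that $ab,cd$ are edges and $ac,bd$ are non-edges. For $C_4=abdc$ this is immediate, for $P_4=b\,a\,d\,c$ likewise, and for $2K_2=\{ab,cd\}$ also. Suppose such a graph were $T(\alpha,S)$. The two edges give
\[
\alpha(a)+\alpha(b)+\alpha(c)+\alpha(d)\ge 2S,
\]
while the two non-edges give the same sum $<2S$, a contradiction.

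\emph{Key structural step.} Let $G$ have no induced $C_4$, $P_4$ or $2K_2$. Then the same four-vertex configuration cannot occur in $G$ at all. Indeed, whatever the status of the pairs $ad$ and $bc$, the induced graph on $\{a,b,c,d\}$ would be one of the three forbidden graphs.

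From this I claim that for any two vertices $u\neq w$, the sets $N(u)\setminus\{w\}$ and $N(w)\setminus\{u\}$ are nested. Suppose not, and pick $x\in N(u)\setminus N[w]$ and $y\in N(w)\setminus N[u]$. Then $u,x,w,y$ are distinct, $ux$ and $wy$ are edges, and $uy$ and $wx$ are non-edges. This is a forbidden configuration.

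Now let $v$ be a vertex of maximum degree, and suppose $v$ is not dominating, so some $w\notin N[v]$ exists. If $w$ is isolated, we have found an isolated vertex. Otherwise, since $\deg w\le \deg v$ and $vw\notin E$, nestedness forces $N(w)\subseteq N(v)$. So $w$ has a neighbour $y$ with $y\in N(v)$. Now compare $v$ and $y$. We have $w\in N(y)\setminus N[v]$, so nestedness gives $N(v)\setminus\{y\}\subseteq N(y)\setminus\{v\}$. Hence $N(y)\supseteq (N(v)\setminus\{y\})\cup\{v,w\}$, and therefore $\deg y\ge \deg v+1$. This contradicts the maximality of $\deg v$. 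Consequently, every such $G$ has an isolated vertex or a dominating vertex.

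\emph{Induction.} The cases $|V|\le 1$ are trivial. Otherwise, pick an isolated or dominating vertex $z$. The graph $G-z$ is still free of the forbidden induced subgraphs, so by induction $G-z=T(\alpha,S)$.
\begin{itemize}
\item If $z$ is dominating, set $\alpha(z)=S-\min\alpha+1$. Then $\alpha(z)+\alpha(u)\ge S+1>S$ for every $u$, so $z$ is adjacent to everything.
\item If $z$ is isolated, set $\alpha(z)=S-\max\alpha-1$. Then $\alpha(z)+\alpha(u)\le S-1<S$ for every $u$, so $z$ is adjacent to nothing.
\end{itemize}
In both cases the pairs not involving $z$ are unchanged, so $G=T(\alpha,S)$.

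The main obstacle is the structural step, namely showing that a maximum-degree vertex is dominating unless an isolated vertex exists. The nestedness lemma is exactly what makes it go through.
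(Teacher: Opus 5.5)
The paper gives no proof of this statement. It is quoted from Chvátal and Hammer, and only the necessity direction is ever used, in the proof of \cref{theorem:partial_join}. Your argument is a correct, self-contained proof, with one labeling slip in the necessity part.

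\textbf{The slip.} Read cyclically, $C_4=abdc$ has edges $ab,bd,dc,ca$, so $ac$ and $bd$ are edges rather than non-edges. You want the cycle $a\,b\,c\,d$ (equivalently $b\,a\,d\,c$, in line with your $P_4$). This is exactly the completion your own case analysis in the structural step produces when both $ad$ and $bc$ are edges.

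\textbf{The rest checks out.}
\begin{itemize}
\item In the non-nested case you really do get $x\in N(u)\setminus N[w]$ and $y\in N(w)\setminus N[u]$.
\item The sets $N(v)\setminus\{y\}$, $\{v\}$ and $\{w\}$ are pairwise disjoint, so $\deg y\ge \deg v+1$ as claimed.
\item The weight extensions for a dominating or isolated $z$ leave all old pairs unchanged.
\end{itemize}

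\textbf{Comparison.} Your sufficiency route is the standard one: nested neighbourhoods give an isolated or dominating vertex, and you then peel it off. A pleasant feature of your write-up is that it works directly with the pair-sum definition $T(\alpha,S)$ adopted in the paper. Chvátal and Hammer's original formulation instead uses a single linear inequality whose $0/1$ solutions are exactly the stable sets.

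Your averaging argument alone would also make the paper self-contained at the one place the theorem is used. In the proof of \cref{theorem:partial_join}, $M$ is a matching, so one of the two perfect matchings $\{vx,wy\}$ and $\{vy,wx\}$ lies entirely in $F_i$. Together with the non-edges $vw$ and $xy$ of $H_i=T(\alpha,S)$, this gives $2S\le \alpha(v)+\alpha(w)+\alpha(x)+\alpha(y)<2S$ directly. That removes both the case distinction on $|M|$ and any appeal to the forbidden-subgraph characterization.
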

	\begin{figure}
		\centering
		\begin{tikzpicture}[
			vertex/.style={circle, draw, fill=black, inner sep=0pt, minimum size=5pt},
			every edge/.style={draw, thick},
			label distance=4pt
			]
			
			\begin{scope}[xshift=0cm]
				\node[vertex] (a1) at (0,  1) {};
				\node[vertex] (a2) at (1,  1) {};
				\node[vertex] (a3) at (1,  0) {};
				\node[vertex] (a4) at (0,  0) {};
				
				\draw[thick] (a1)--(a2)--(a3)--(a4)--(a1);
				
				\node at (0.5, -0.6) {$C_4$};
			\end{scope}
			
			\begin{scope}[xshift=2.5cm]
				\node[vertex] (b1) at (0,   0.5) {};
				\node[vertex] (b2) at (0.7, 0.5) {};
				\node[vertex] (b3) at (1.4, 0.5) {};
				\node[vertex] (b4) at (2.1, 0.5) {};
				
				\draw[thick] (b1)--(b2)--(b3)--(b4);
				
				\node at (1.05, -0.6) {$P_4$};
			\end{scope}
			
			\begin{scope}[xshift=5.8cm]
				\node[vertex] (c1) at (0,   1) {};
				\node[vertex] (c2) at (0,   0) {};
				\node[vertex] (c3) at (0.9, 1) {};
				\node[vertex] (c4) at (0.9, 0) {};
				
				\draw[thick] (c1)--(c2);
				\draw[thick] (c3)--(c4);
				
				\node at (0.45, -0.6) {$2K_2$};
			\end{scope}
			
		\end{tikzpicture}
		\caption{Forbidden induced subgraphs in a threshold graph.}
		\label{fig:forbidden_threshold}
	\end{figure}
	We conclude this section by introducing the notion of a \emph{partial join}, which helps us to bound the threshold dimension of the graphs we encounter.
	\begin{definition}\label{def:partial_join}
		Let $n\in\mathbb{N}$ and let $(G_i=(V_i,E_i))_{i\in [n]}$ be graphs. We call a graph $G=(V,E)$ a \emph{partial join} of $(G_i)_{i\in [n]}$ if there exist a bijection $\sigma\colon \dot{\bigcup}_{i\in [n]} V_i\leftrightarrow V$ with the following properties:
		\begin{enumerate}
			\item For every $i\in [n]$, $\sigma$ induces a graph isomorphism between $G_i$ and $G[\sigma(V_i)]$.
			\item For every $1\le i < j\le n$, the set of non-edges $M_{i,j}\coloneqq \{\{v,w\} \notin E\colon v\in \sigma(V_i), w\in \sigma(V_j)\}$ is a matching.
		\end{enumerate}
	\end{definition}
	\begin{remark}
		In fact, it suffices to require that $G_i$ is isomorphic to $G[\sigma(V_i)]$, without the isomorphism being induced by $\sigma$.
	\end{remark}
	
	\begin{theorem}\label{theorem:partial_join}
		Let $n\in\mathbb{N}$, let $(G_i=(V_i,E_i))_{i\in [n]}$ be graphs, and let $G=(V,E)$ be a partial join of $(G_i)_{i\in [n]}$. Then $\thdim(G)\ge \sum_{i\in [n]} \thdim(G_i)$.
	\end{theorem}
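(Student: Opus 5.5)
Let $d\coloneqq \thdim(G)$ and fix threshold graphs $H_1,\ldots,H_d$ on $V$ with $E(G)=\bigcap_{j\in[d]} E(H_j)$. The idea is to partition the index set $[d]$ into disjoint sets $J_1,\ldots,J_n$ so that, for each $i$, the threshold graphs $(H_j)_{j\in J_i}$ restricted to $\sigma(V_i)$ already realize $G[\sigma(V_i)]\cong G_i$. Since induced subgraphs of threshold graphs are threshold graphs, this gives $\thdim(G_i)\le |J_i|$, and summing yields $\sum_i \thdim(G_i)\le d$.

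For each $i$, let $J_i$ be the set of indices $j$ such that $H_j$ deletes at least one non-edge of $G$ inside $\sigma(V_i)$. Every non-edge of $G[\sigma(V_i)]$ is deleted by some $H_j$, and each $H_j[\sigma(V_i)]$ is a supergraph of $G[\sigma(V_i)]$. Hence $\bigcap_{j\in J_i} E(H_j[\sigma(V_i)]) = E(G[\sigma(V_i)])$. If $G_i$ is complete, then $J_i=\emptyset$ and $\thdim(G_i)=0$, which is consistent. So everything reduces to showing that the $J_i$ are pairwise disjoint.

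That disjointness is the key step, and it is where the matching condition comes in. Suppose some $H=H_j$ deletes a non-edge $ab$ with $a,b\in\sigma(V_i)$ and a non-edge $cd$ with $c,d\in\sigma(V_k)$, where $i\ne k$. Then $a,b,c,d$ are four distinct vertices and $ab,cd\notin E(H)$. Consider the four cross pairs $ac,ad,bc,bd$. The cross non-edges of $G$ between $\sigma(V_i)$ and $\sigma(V_k)$ form a matching, so at most two of these pairs are non-edges of $G$, and those two (if there are two) are disjoint, i.e.\ $\{ac,bd\}$ or $\{ad,bc\}$. Any cross pair that is an edge of $G$ is also an edge of $H$. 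By symmetry, we may assume $ad,bc\in E(H)$, since we may choose the matching pairs to be among $ac,bd$. The induced subgraph $H[\{a,b,c,d\}]$ then contains the edges $ad,bc$, does not contain $ab,cd$, and possibly contains $ac$ and/or $bd$:
\begin{itemize}
\item if neither $ac$ nor $bd$ is present, we get $2K_2$;
\item if exactly one is present, we get $P_4$;
\item if both are present, we get $C_4$ (namely $a\,d\,b\,c\,a$; note that $ab,cd$ are the missing diagonals).
\end{itemize}
Each case is forbidden in a threshold graph by \cref{theorem:forbidden_subgraphs_threshold}, a contradiction. Hence the $J_i$ are pairwise disjoint.

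I expect the only care needed to be in this case analysis: confirming that the matching property guarantees two cross pairs forming a perfect matching on $\{a,b\}\times\{c,d\}$ that are both $G$-edges. If one cross pair is a $G$-non-edge, say $ac$, then $bd$ may or may not be one, but $ad$ and $bc$ cannot be non-edges, since each shares an endpoint with $ac$. So $ad,bc\in E(G)\subseteq E(H)$. Combining disjointness with the restriction argument above gives $d\ge\sum_i|J_i|\ge\sum_i\thdim(G_i)$.
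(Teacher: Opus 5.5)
Your proof is correct and follows the paper's argument: the paper also takes, for each part, the set of threshold graphs that delete a non-edge inside $\sigma(V_j)$, bounds $\thdim(G_j)$ by its size, and proves pairwise disjointness by showing that the four vertices induce $C_4$, $P_4$ or $2K_2$ in the shared threshold graph. The only cosmetic difference is the case split: the paper branches on the number $|M|\in\{0,1,2\}$ of cross non-edges of $H_i$, whereas you fix the two guaranteed cross edges $ad,bc$ and branch on whether $ac$ and $bd$ are present, which covers the same cases.
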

	\begin{proof}
		Let $\thdim(G)\coloneqq d$ and let $(H_i=(V,F_i))_{i\in [d]}$ be threshold graphs with $\bigcap_{i\in [d]} F_i = E$. For $j\in [n]$, let $I_j\coloneqq \{i\in [d]\colon \exists v\ne w\in \sigma(V_j)\colon \{v,w\}\notin F_i\}$ be the set of indices $i$ such that $H_i$ contains a `non-edge' of $G[\sigma(V_j)]\cong G_j$. Then
		$E[\sigma(V_j)]=\bigcap_{i\in I_j} F_i[\sigma(V_j)]$, so $\thdim(G_j)=\thdim(G[\sigma(V_j)])\le |I_j|$. Hence, it suffices to show that the sets $(I_j)_{j\in [n]}$ are pairwise disjoint to prove the desired statement.
		
		Assume towards a contradiction that there were $j\ne k \in [n]$ with $I_j\cap I_k\ne \emptyset$ and let $i\in I_j\cap I_k$.
		Let $v\ne w\in \sigma(V_j)$ and $x\ne y\in \sigma(V_k)$ such that $\{v,w\}\notin F_i$ and $\{x,y\}\notin F_i$.
		As $E \subseteq F_i$, every non-edge in $H_i$ is a non-edge in $G$.
		Thus, by definition of a partial join, $M\coloneqq \{\{a,b\}\colon a\in \{v,w\},b\in\{x,y\},\{a,b\}\notin F_i\}$ forms a matching.
		It is easy to observe that: if $|M|=0$, then $H_i[\{v,w,x,y\}]\cong C_4$; if $|M|=1$, then $H_i[\{v,w,x,y\}]\cong P_4$; and if $|M|=2$, then $H_i[\{v,w,x,y\}]\cong 2K_2$.
		By \cref{theorem:forbidden_subgraphs_threshold}, the graph $H_i[\{v,w,x,y\}]$ is not a threshold graph in any of the three cases.
		As threshold graphs are closed under taking induced subgraphs, this contradicts the assumption that $H_i$ is a threshold graph.
	\end{proof}
	
	\subsection{Reduced graphs}
	In this section we study the relationship between the boxicity of a graph and that of its reduced graph, which is obtained by identifying vertices with the same neighborhood structure.

	\begin{definition}
		Given a graph $G=(V,E)$, we define an equivalence relation $\sim$ on $V$ by setting $u\sim v$ if $N(u)=N(v)$.
		For $v\in V$, denote by $[v]$ the equivalence class of $v$ with respect to $\sim$. 
		The reduced graph $G_E$ has as vertex set $\{[v]\colon v\in V\}$ the equivalence classes with respect to $\sim$, and edge set $\{\{[u],[v]\}\colon \{u,v\}\in E\}$.
	\end{definition}
In order to study the boxicity of reduced graphs, the following relation will be useful.
	\begin{definition}
		Given a graph $G=(V,E)$, we define the relation $\sim'$ on $V$ by letting $u\sim' v$ if $N(u)\setminus \{u,v\}=N(v)\setminus\{u,v\}$.
	\end{definition}
	Once established that $\sim'$ is also an equivalence relation, we can define a graph analogous to the reduced graph $G_E$.
	\begin{proposition}
		$\sim'$ is an equivalence relation.
	\end{proposition}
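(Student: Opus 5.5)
Reflexivity and symmetry are immediate from the definition: $N(u)\setminus\{u,u\}=N(u)\setminus\{u,u\}$ trivially, and the defining condition $N(u)\setminus\{u,v\}=N(v)\setminus\{u,v\}$ is symmetric in $u$ and $v$. All of the work is in transitivity. We assume $u\sim' v$ and $v\sim' w$ and show $u\sim' w$. If two of $u,v,w$ coincide, the claim is trivial, so we assume they are pairwise distinct.

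We show $N(u)\setminus\{u,w\}\subseteq N(w)\setminus\{u,w\}$; the reverse inclusion follows by the symmetric argument, exchanging the roles of $u$ and $w$. Take $x\in N(u)$ with $x\notin\{u,w\}$. There are two cases.

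\emph{Case $x\neq v$.} Then $x\in N(u)\setminus\{u,v\}=N(v)\setminus\{u,v\}$, so $x\in N(v)$. Since $x\notin\{v,w\}$, we get $x\in N(v)\setminus\{v,w\}=N(w)\setminus\{v,w\}$, hence $x\in N(w)$.

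\emph{Case $x=v$.} This is the main obstacle, since the hypotheses say nothing directly about $v$ itself. Here $v\in N(u)$, so $u\in N(v)$. As $u\notin\{v,w\}$, we have $u\in N(v)\setminus\{v,w\}=N(w)\setminus\{v,w\}$, so $u\in N(w)$, i.e.\ $w\in N(u)$. Now $w\notin\{u,v\}$, so $w\in N(u)\setminus\{u,v\}=N(v)\setminus\{u,v\}$. Hence $w\in N(v)$, i.e.\ $v\in N(w)$, which is exactly $x\in N(w)$.

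In both cases $x\in N(w)\setminus\{u,w\}$, which proves the inclusion. The symmetric argument gives the reverse inclusion, so $u\sim' w$, and $\sim'$ is an equivalence relation.
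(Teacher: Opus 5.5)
Your proof is correct and follows essentially the same route as the paper. In both, agreement of the neighbourhoods away from $v$ comes from chaining the two hypotheses, and the only real step, $v\in N(u)\Rightarrow v\in N(w)$, is handled by the same argument: first $u\in N(w)$, then $w\in N(v)$.
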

	\begin{proof}
		Clearly $v\sim' v$ for every $v\in V$ and $\sim'$ is symmetric. Let $u,v,w\in V$ be three distinct vertices with $u\sim ' v$ and $v\sim' w$. Then $N(u)\setminus \{u,v,w\}=N(v)\setminus \{u,v,w\}=N(w)\setminus \{u,v,w\}$. Hence, it remains to show that $v\in N(u)$ if and only if $v\in N(w)$. By symmetry, it suffices to show that $v\in N(u)$ implies $v\in N(v)$. Suppose $v\in N(u)$. Then $u\in N(v)\setminus \{v,w\}=N(w)\setminus \{v,w\}$, so $u\in N(w)$, i.e., $w\in N(u)\setminus \{u,v\}=N(v)\setminus \{u,v\}$. Hence, $v\in N(w)$.
	\end{proof}
	
	\begin{definition}
		For $v\in V$, we let $[v]'$ denote the equivalence class of $v$ with respect to $\sim'$.
		The graph $G_{E'}$ is given by $V(G_{E'})=\{[v]'\colon v\in V\}$ and $E(G_{E'})=\{\{[u]',[v]'\}\colon [u]'\ne [v]' \text{ and } \{u,v\}\in E\}$.
	\end{definition}
	
	Note that for a graph $G$, its reduced graph $G_E$ is isomorphic to an induced subgraph of $G$.
	Hence, $\boxicity(G) \geq \boxicity(G_E)$ by \cref{lemma:induced_subgraph}.
	In particular, the cycle on four vertices has boxicity $2$, while its reduced graph, which is a clique on two vertices, has boxicity $0$.
	This shows that the above inequality can be strict.
	We now show that going from $G_E$ to $G_{E'}$ the boxicity does not change.
	To achieve this, we give a closer look at the equivalence classes in $V/\sim'$.
	\begin{proposition}\label{prop:clique_or_stable}
		Every equivalence class of $\sim'$ induces a clique or a stable set of $G$.
	\end{proposition}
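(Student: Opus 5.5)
We need to show that each $\sim'$-class $C$ is either a clique or a stable set in $G$. The plan is to show that adjacency is constant on pairs of distinct vertices in $C$, by propagating it along triples.

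Classes with at most two elements are trivially a clique or a stable set, so assume $|C|\ge 3$. It then suffices to prove the following claim: for any three distinct vertices $u,v,w\in C$, if $uv\in E$ then $uw\in E$. Indeed, given two pairs $\{a,b\}$ and $\{c,d\}$ of distinct vertices in $C$, we can pass from one to the other by a chain of pairs, each sharing one vertex with the next. If the pairs share a vertex, one step suffices. If they are disjoint, we go $\{a,b\}\to\{a,c\}\to\{c,d\}$. The claim, applied together with the symmetry of $E$, shows that adjacency is preserved along each step. Hence either all pairs in $C$ are edges or none are.

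To prove the claim, let $u,v,w\in C$ be distinct with $uv\in E$. The key step is to use $v\sim' w$, which gives $N(v)\setminus\{v,w\}=N(w)\setminus\{v,w\}$. Since $u\in N(v)$ and $u\notin\{v,w\}$, we get $u\in N(w)$, that is, $uw\in E$. This is essentially the same manipulation as in the transitivity proof for $\sim'$.

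There is no real obstacle here. The only care needed is to make the propagation argument explicit, namely that the constancy of adjacency on triples extends to all pairs in the class.
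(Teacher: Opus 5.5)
Your proof is correct and takes essentially the same approach as the paper. The key step, that $v\sim' w$ together with $u\in N(v)\setminus\{v,w\}$ forces $u\in N(w)$, is the paper's argument, and your chain $\{a,b\}\to\{a,c\}\to\{c,d\}$ for disjoint pairs matches its two-step treatment of the case $\{v,w\}\cap\{x,y\}=\emptyset$.
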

	\begin{proof}
		This is clear for equivalence classes of size $1$ and $2$.
		Consider an equivalence class $U\subseteq V$ of size $|U|\ge 3$.
		If $U$ is a stable set in $G$, we are done.
		Hence, let $v,w\in U$ with $\{v,w\}\in E$.
		Let $x,y\in U$ be distinct vertices.
		We need to show that $\{x,y\}\in E$.
		This is clear if $\{x,y\}=\{v,w\}$.
		If $|\{x,y\}\cap \{v,w\}|=1$, let w.l.o.g.\ $v=x$ and $w\ne y$.
		As $w\sim' y$ and $v\in N(w)\setminus \{w,y\}=N(y)\setminus \{w,y\}$, we have $\{x,y\}=\{v,y\}\in E$.
		Finally, assume $\{v,w\}\cap \{x,y\}=\emptyset$.
		By the previously considered case, we know that $\{v,x\}\in E$.
		Now, again applying this case yields $\{x,y\}\in E$.
	\end{proof}
	
	\begin{proposition}\label{prop:stable_equivalence_class}
		Every equivalence class of $\sim'$ that is a stable set of $G$ is an equivalence class of $\sim$.
	\end{proposition}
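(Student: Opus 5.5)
Let $U$ be an equivalence class of $\sim'$ that is a stable set of $G$. The plan is to show two inclusions: first that any two vertices of $U$ are $\sim$-equivalent, and then that no vertex outside $U$ is $\sim$-equivalent to a vertex of $U$.

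For the first inclusion, take $u,v\in U$. By definition of $\sim'$, we have $N(u)\setminus\{u,v\}=N(v)\setminus\{u,v\}$. Since $G$ is simple, $u\notin N(u)$ and $v\notin N(v)$. Since $U$ is stable, $\{u,v\}\notin E$, so $v\notin N(u)$ and $u\notin N(v)$. Hence $N(u)=N(u)\setminus\{u,v\}$ and $N(v)=N(v)\setminus\{u,v\}$, which gives $N(u)=N(v)$, i.e.\ $u\sim v$.

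For the second inclusion, suppose $u\in U$ and $w\in V$ satisfy $u\sim w$, i.e.\ $N(u)=N(w)$. Then trivially $N(u)\setminus\{u,w\}=N(w)\setminus\{u,w\}$, so $u\sim' w$ and therefore $w\in U$. Thus the $\sim$-class of $u$ is contained in $U$.

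Combining the two inclusions, $U$ is exactly the $\sim$-equivalence class of any of its elements. There is no real obstacle here; the only point needing care is the use of stability to remove $u$ and $v$ from each other's neighbourhoods, together with simplicity of $G$ to remove them from their own.
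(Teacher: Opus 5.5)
Your proof is correct and follows the paper's argument: the paper likewise notes that $\sim$ refines $\sim'$ (so $\sim$-classes sit inside $\sim'$-classes) and then uses stability to get $N(u)=N(u)\setminus\{u,v\}=N(v)\setminus\{u,v\}=N(v)$ for $u,v$ in a stable $\sim'$-class. You make the use of simplicity ($u\notin N(u)$) explicit, which the paper leaves implicit, but the route is the same.
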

	\begin{proof}
		As $u\sim v$ implies $u\sim' v$, the equivalence classes of $\sim$ are subsets of equivalence classes of $\sim'$. On the other hand, for two vertices $u$ and $v$ contained in an equivalence class of $\sim'$ that is a stable set, we have $N(u)=N(u)\setminus \{u,v\}=N(v)\setminus \{u,v\}=N(v)$, so $u\sim v$.
	\end{proof}
	
	\begin{proposition}\label{prop:clique_equivalence_class}
		For every equivalence class $U$ of $\sim'$ that forms a clique in $G$ and every $u\in U$, we have $[u]=\{u\}$.
	\end{proposition}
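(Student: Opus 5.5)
The plan is to argue by contradiction, using that $\sim$-classes are contained in $\sim'$-classes. Fix an equivalence class $U$ of $\sim'$ that forms a clique in $G$, and a vertex $u\in U$. Since $u\sim v$ implies $u\sim' v$ (as already noted in the proof of \cref{prop:stable_equivalence_class}), we have $[u]\subseteq [u]'=U$. It therefore suffices to show that no vertex $v\in U\setminus\{u\}$ satisfies $v\sim u$.

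So suppose $v\in U$ with $v\neq u$ and $N(u)=N(v)$. Because $U$ is a clique, $\{u,v\}\in E$, which gives $v\in N(u)$. From $N(u)=N(v)$ we then get $v\in N(v)$. But $G$ is simple, so no vertex is its own neighbour, and this is a contradiction. Hence $[u]=\{u\}$.

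Two points need checking. First, $|U|=1$ poses no difficulty, since then $[u]\subseteq U=\{u\}$ immediately. Second, the underlying fact is that adjacent vertices in a simple graph can never be $\sim$-equivalent. This matches the remark in the paper's definition that each $\sim$-class is an independent set, so a $\sim$-class meets the clique $U$ in at most one vertex. I expect no real obstacle here beyond stating the containment $[u]\subseteq[u]'$ cleanly.
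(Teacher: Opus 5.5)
Your proof is correct and is essentially the paper's argument: use $[u]\subseteq U$ because $\sim$ refines $\sim'$, then for $v\in U\setminus\{u\}$ the clique edge $\{u,v\}$ separates $N(u)$ from $N(v)$, so $u\not\sim v$. The only difference is cosmetic: you use $v\in N(u)\setminus N(v)$ where the paper uses the symmetric witness $u\in N(v)\setminus N(u)$.
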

	\begin{proof}
		As $u\sim v$ implies $u\sim' v$, we have $[u]\subseteq U$. For every $v\in U\setminus \{u\}$, we have $u\in N(v)\setminus N(u)$, so $u\not\sim v$. Hence, $[u]=\{u\}$.
	\end{proof}

	\begin{theorem}\label{thm:boxicity_two_equiv_relations}
		We have $\boxicity(G_E)=\boxicity(G_{E'})$.
	\end{theorem}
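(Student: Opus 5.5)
The plan is to show that $G_E$ arises from $G_{E'}$ by replacing some vertices with cliques of \emph{true twins}, meaning vertices with identical closed neighbourhoods. Boxicity is unchanged by this operation: one inequality comes from \cref{lemma:induced_subgraph}, and the other from copying boxes.

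\textbf{Step 1 (adjacency is well defined).} Let $U\ne W$ be two $\sim'$-classes, and let $u,u_2\in U$ and $w\in W$. Since $w\notin\{u,u_2\}$ and $N(u)\setminus\{u,u_2\}=N(u_2)\setminus\{u,u_2\}$, we get $w\in N(u)$ if and only if $w\in N(u_2)$. Hence either all pairs between $U$ and $W$ are edges of $G$, or none is. So $[u]'[w]'\in E(G_{E'})$ holds exactly when $uw\in E$ for any (equivalently, every) choice of representatives. The same argument, applied to $\sim$-classes, shows that adjacency in $G_E$ is determined by representatives.

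\textbf{Step 2 (comparing the two quotients).} By \cref{prop:clique_or_stable}, every $\sim'$-class $U$ is a clique or a stable set.
\begin{itemize}
\item If $U$ is stable, \cref{prop:stable_equivalence_class} shows that $U$ is a single $\sim$-class, so it contributes one vertex to both $G_E$ and $G_{E'}$.
\item If $U$ is a clique, \cref{prop:clique_equivalence_class} shows that every $u\in U$ forms its own $\sim$-class. So $U$ contributes one vertex $[U]'$ to $G_{E'}$ and $|U|$ vertices $\{u\}$ to $G_E$. These $|U|$ vertices are pairwise adjacent in $G_E$, because $U$ is a clique in $G$. By Step 1, each of them is adjacent in $G_E$ to exactly the classes $[w]$ with $[w]'$ adjacent to $U$ in $G_{E'}$.
\end{itemize}
Therefore $G_E$ is isomorphic to the graph obtained from $G_{E'}$ by blowing up each clique-class vertex $U$ into a clique $K_{|U|}$ of true twins. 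In particular, choosing one representative per $\sim'$-class exhibits $G_{E'}$ as an induced subgraph of $G_E$. Then \cref{lemma:induced_subgraph} gives $\boxicity(G_{E'})\le\boxicity(G_E)$.

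\textbf{Step 3 (reverse inequality).} Take a box representation of $G_{E'}$ in $\mathbb{R}^d$ with $d=\boxicity(G_{E'})$. Give every copy of a blown-up vertex the same box as the original vertex. Two copies of the same vertex receive identical boxes, so they intersect, which matches their adjacency. Copies of distinct vertices intersect exactly when the originals are adjacent in $G_{E'}$, which matches adjacency in $G_E$ by Step 2. Equivalently, in the language of \cref{lemma:boxicity_interval_graphs}, one duplicates the intervals in each $I_j$. This yields $\boxicity(G_E)\le\boxicity(G_{E'})$.

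The main obstacle is the bookkeeping in Step 2, namely checking carefully that the isomorphism respects edges in both quotients. The essential input is the well-definedness argument of Step 1, which uses that the two representatives lie in the same class while the third vertex lies outside it.
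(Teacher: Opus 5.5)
Your proof is correct and follows the paper's argument. The paper also uses \cref{prop:clique_or_stable,prop:stable_equivalence_class,prop:clique_equivalence_class} to see that $G_{E'}$ arises from $G_E$ by collapsing each clique class $U$, takes one representative per class to embed $G_{E'}$ as an induced subgraph of $G_E$, and duplicates the interval of each clique class $|U|$ times for the reverse inequality. Your Step 1 only makes explicit that adjacency between distinct $\sim'$-classes does not depend on the chosen representatives, which the paper asserts without proof.
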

	\begin{proof}
		By \cref{prop:stable_equivalence_class,prop:clique_equivalence_class}, $G_{E'}$ arises from $G_{E}$ by, for every equivalence class $U$ of $\sim'$ that forms a clique in $G$, contracting $\{[u]\colon u\in U\}$ into a single vertex.
		Note that $\{[u]\colon u\in U\}$ forms a clique in $G_{E}$ and that for every $u,u'\in U$ and $v\in V\setminus U$, $\{[u],[v]\}\in E(G_{E})$ if and only if $\{[u'],[v]\}\in E(G_{E'})$.
		In particular, $G_{E'}$ is isomorphic to the induced subgraph of $G_{E}$ we obtain by choosing exactly one of the vertices $[u]$ with $u\in U$ for each equivalence class $U$ of $\sim'$ that forms a clique.
		This implies $\boxicity(G_E)\ge\boxicity(G_{E'})$ by \cref{lemma:induced_subgraph}.
		For the other inequality, let $\boxicity(G_{E'})\eqqcolon d$ and let $H'_1,\dots,H'_d$ be interval graphs whose intersection is $G_{E'}$.
		We obtain interval graphs $H_1,\dots,H_d$, by, for replacing an interval $I_U$ in the interval representation of $H_i$ corresponding to an equivalence class $U$ of $\sim'$ that forms a clique by $|U|$ copies $I_{[u]}$ for $u\in U$.
		The graphs $H_1,\dots,H_d$ yield an interval representation of $G_{E}$.
	\end{proof}

	\section{Integral covering graphs}
	In this section, we introduce the notion of \emph{integral covering graphs}. While being defined in a completely combinatorial way, integral covering graphs capture the structure that lies at the heart of the zero divisor graphs we study in this paper: We will observe that the reduced graphs of the zero divisor graphs we investigate are isomorphic to integral covering graphs. 
	
	In the following, we will use bold letters, e.g.\ $\bm{m}$, to denote vectors, and we will write $m_i$ to denote the $i$-th coordinate of $\bm{m}$. We further use $\bm{0}_n$, $\bm{1}_n$ and $\bm{2}_n$ to refer to the $n$-dimensional all-zero, all-one  and all-two vector, respectively. If the dimension $n$ is clear from the context, we may simply write, $\bm{0}$, $\bm{1}$ and $\bm{2}$, respectively.
	\begin{definition}\label{def:integral_covering_graph}
		Given $\bm{m}\in\mathbb{N}_{\ge 1}^n$, the \emph{integral covering graph} $D(\bm{m})$ is defined as follows: The vertex set is given by $V(D(\bm{m}))=\prod_{i=1}^n [m_i]_0\setminus \{\bm{0},\bm{m}\}$.
		Two distinct vertices $\bm{v}$ and $\bm{w}$ are connected by an edge if $v_i+w_i\ge m_i$ for all $i\in [n]$.
	\end{definition}
	We remark that for $\bm{m}\in\mathbb{N}_{\ge 1}^n$, the induced subgraph $D(\bm{m})\left[\prod_{i=1}^n \{0,m_i\}\setminus \{\bm{0},\bm{m}\}\right]$ is isomorphic to $D(\bm{1}_n)$.
	This implies \cref{prop:lower_bound_via_D_n}.
	\begin{proposition}\label{prop:lower_bound_via_D_n}
		For every $n\in\mathbb{N}_{\ge 1}$ and every $\bm{m}\in\mathbb{N}_{\ge 1}^n$, we have $\boxicity(D(\bm{m}))\ge \boxicity(D(\bm{1}_n))$ and $\thdim(D(\bm{m}))\ge \thdim(D(\bm{1}_n))$.
	\end{proposition}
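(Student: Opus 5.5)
The plan is to exhibit $D(\bm{1}_n)$ as an induced subgraph of $D(\bm{m})$ and then use monotonicity of both parameters under induced subgraphs.

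Consider the subset $U\coloneqq\prod_{i=1}^n \{0,m_i\}\setminus\{\bm{0},\bm{m}\}$ of $V(D(\bm{m}))$, and the map $\varphi\colon V(D(\bm{1}_n))\to U$ given by $\varphi(\bm{x})_i = m_i x_i$. This map is well defined and bijective, because $\bm{x}\mapsto (m_ix_i)_i$ is a bijection $\{0,1\}^n\to\prod_i\{0,m_i\}$ that sends $\bm{0}$ to $\bm{0}$ and $\bm{1}$ to $\bm{m}$.

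Next we check that $\varphi$ preserves adjacency and non-adjacency. For distinct $\bm{x},\bm{y}\in V(D(\bm{1}_n))$ and each coordinate $i$:
\begin{itemize}
\item $m_ix_i+m_iy_i\ge m_i$ holds iff $x_i+y_i\ge 1$, since $m_i\ge 1$ and $x_i,y_i\in\{0,1\}$;
\item the left side lies in $\{0,m_i,2m_i\}$, so it is at least $m_i$ exactly when not both $x_i,y_i$ are $0$.
\end{itemize}
Taking this over all $i$, $\{\varphi(\bm{x}),\varphi(\bm{y})\}\in E(D(\bm{m}))$ iff $\{\bm{x},\bm{y}\}\in E(D(\bm{1}_n))$. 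Hence $D(\bm{m})[U]\cong D(\bm{1}_n)$.

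The boxicity inequality $\boxicity(D(\bm{m}))\ge\boxicity(D(\bm{1}_n))$ now follows directly from \cref{lemma:induced_subgraph}.

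For threshold dimension we need the analogous monotonicity. Suppose $E(D(\bm{m}))=\bigcap_j E(H_j)$ with each $H_j$ a threshold graph. Restricting to $U$ gives $E(D(\bm{m})[U])=\bigcap_j E(H_j[U])$. Each $H_j[U]$ is again a threshold graph: if $H_j=T(\alpha,S)$, then $H_j[U]=T(\alpha|_U,S)$. This gives $\thdim(D(\bm{m}))\ge\thdim(D(\bm{1}_n))$.

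The only step needing care is the coordinatewise equivalence in the adjacency check, which uses $m_i\ge1$. The rest is routine, so I expect no real obstacle.
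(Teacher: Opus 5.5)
Your proposal is correct and follows the paper's own argument: the paper notes that $D(\bm{m})\bigl[\prod_{i=1}^n\{0,m_i\}\setminus\{\bm{0},\bm{m}\}\bigr]\cong D(\bm{1}_n)$ and concludes by monotonicity under induced subgraphs. You additionally verify the coordinatewise adjacency equivalence and that restrictions of threshold graphs remain threshold graphs. The paper leaves both implicit.
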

	On the other hand, the definition of $D(\bm{m})$ implies that it arises as the intersection of $n$ threshold graphs. 
	\begin{proposition}\label{prop:generalized_disjointness_graph_upper_bound}
		For every $n\in\mathbb{N}_{\ge 1}$ and every $\bm{m}\in\mathbb{N}_{\ge 1}^n$, we have $\boxicity(D(\bm{m}))\le \thdim(D(\bm{m}))\le n$.
	\end{proposition}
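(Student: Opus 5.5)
The plan is to write $E(D(\bm{m}))$ explicitly as the intersection of $n$ threshold graphs, one per coordinate. Then $\thdim(D(\bm{m}))\le n$ follows, and $\boxicity(D(\bm{m}))\le \thdim(D(\bm{m}))$ is the general inequality already noted in the introduction: every threshold graph is an interval graph, so \cref{lemma:boxicity_interval_graphs} applies.

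For each $i\in[n]$, take the vertex set $V\coloneqq V(D(\bm{m}))$ and set $\alpha_i(\bm{v})\coloneqq v_i$ and $S_i\coloneqq m_i$. Let $H_i\coloneqq T(\alpha_i,S_i)$. By definition, two distinct vertices $\bm{v},\bm{w}$ are adjacent in $H_i$ if and only if $v_i+w_i\ge m_i$. Hence
\[
\bigcap_{i\in[n]} E(H_i)=\Bigl\{\{\bm{v},\bm{w}\}\in\tbinom{V}{2}\colon v_i+w_i\ge m_i \text{ for all } i\in[n]\Bigr\}=E(D(\bm{m})),
\]
which is exactly \cref{def:integral_covering_graph}. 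This exhibits $n$ threshold graphs whose edge-wise intersection is $D(\bm{m})$, so $\thdim(D(\bm{m}))\le n$.

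No step here is a real obstacle. The only points to check are that the threshold-graph definition is applied to unordered pairs of distinct vertices, matching the simple-graph convention of $D(\bm{m})$, and the degenerate case where $D(\bm{m})$ has very few vertices. That case is harmless, since the construction works on any vertex set.
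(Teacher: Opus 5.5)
Your proposal is correct and matches the paper's argument: the paper likewise observes that $D(\bm{m})$ is by definition the intersection of the $n$ threshold graphs $T(\bm{v}\mapsto v_i,\,m_i)$, $i\in[n]$, and that $\boxicity\le\thdim$ because threshold graphs are interval graphs. Your checks on distinct pairs and the possibly empty vertex set are fine.
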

	We remark that the notion of integral covering graphs generalizes the notion of the disjointness graph $D_n$ on the vertex set $2^{[n]}\setminus \{\emptyset, [n]\}$, in which two sets are adjacent if and only if they are disjoint. In particular, \cref{theorem:boxicity_disjointness_graph} follows from the results in this section.
	\begin{proposition}\label{prop:D_n_iso}
	The disjointness graph $D_n$ is isomorphic to $D(\bm{1}_n)$.
	\end{proposition}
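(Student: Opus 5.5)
The isomorphism is the characteristic-vector map. I would define $\varphi\colon 2^{[n]}\setminus\{\emptyset,[n]\}\to V(D(\bm{1}_n))$ by sending a subset $A\subseteq[n]$ to the vector $\bm{v}^A\in\{0,1\}^n$ with $v^A_i=1$ if $i\notin A$ and $v^A_i=0$ if $i\in A$. In other words, $\bm{v}^A$ is the characteristic vector of the complement $[n]\setminus A$. The complement is needed because adjacency in $D(\bm{1}_n)$ means covering: $v_i+w_i\ge 1$ for all $i$, i.e.\ the supports together cover $[n]$. Disjointness of $A$ and $B$ is the same as the complements covering $[n]$.

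First I check that $\varphi$ is a bijection onto the vertex set. We have $\prod_{i=1}^n[1]_0=\{0,1\}^n$, and complementation followed by taking characteristic vectors is a bijection from $2^{[n]}$ onto $\{0,1\}^n$. Under this map $\emptyset\mapsto\bm{1}=\bm{m}$ and $[n]\mapsto\bm{0}$. Hence the non-empty proper subsets are sent exactly onto $\{0,1\}^n\setminus\{\bm{0},\bm{m}\}=V(D(\bm{1}_n))$.

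Next I check that $\varphi$ preserves adjacency. Let $A\neq B$ be vertices of $D_n$; their images are distinct because $\varphi$ is injective. Then $\varphi(A)$ and $\varphi(B)$ are adjacent if and only if for every $i\in[n]$ we have $v^A_i+v^B_i\ge 1$. This holds if and only if every $i$ lies outside $A$ or outside $B$. That is exactly the statement that no $i$ lies in $A\cap B$, i.e.\ $A\cap B=\emptyset$, which is adjacency in $D_n$.

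There is no real obstacle here. The only care needed is to use the complement rather than the identity map, since with $A\mapsto\bm{1}_A$ adjacency would correspond to $A\cup B=[n]$ instead of disjointness. One should also confirm that the two excluded vectors $\bm{0}$ and $\bm{m}$ correspond exactly to the two excluded subsets $[n]$ and $\emptyset$.
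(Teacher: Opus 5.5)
Your proposal is correct and uses exactly the paper's map: a subset $s$ is sent to the $0/1$ vector that is $0$ precisely on $s$, i.e.\ the characteristic vector of the complement. You also spell out the two checks the paper leaves implicit, namely that $\emptyset$ and $[n]$ correspond to the excluded vectors $\bm{1}$ and $\bm{0}$, and that coordinatewise covering $v_i+w_i\ge 1$ for all $i$ is equivalent to $A\cap B=\emptyset$.
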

	\begin{proof}
	We identify the vertex set of $D(\bm{1}_n)$ with $2^{[n]}\setminus \{\emptyset, [n]\}$, the collection of non-empty proper subsets of $[n]$, by identifying $s\subseteq [n]$ with the vector $\bm{s}\in \{0,1\}^n$ with $\bm{s}_i=0$ if and only if $i\in s$. This yields an isomorphism between $D(\bm{1}_n)$ and $D_n$.
	\end{proof}
		
	In the following, we will determine the boxicity and threshold dimension of integral covering graphs. 
	We begin by handling two corner cases, namely the case $n=1$, as well as the case $n=2$ and $m_1=m_2=1$.
	\begin{lemma}\label{lemma:integral_covering_special_cases}
		We have $\boxicity(D(1,1))=\thdim(D(1,1))=0$. For $m\in\mathbb{N}_{\ge 1}$, we have 
		\[\boxicity(D(m))=\thdim(D(m))=\begin{cases}
			0 & m\le 3\\
			1 & m\ge 4
		\end{cases}.\]
		
	\end{lemma}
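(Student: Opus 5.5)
Boxicity $0$ means the graph is complete; under the usual convention this includes the graph with no vertices, since the intersection of zero interval graphs is the complete graph. The plan is therefore to decide, case by case, whether each graph is complete. When it is not, we exhibit a single threshold (hence interval) graph representation and use the bound $\boxicity \le \thdim$.

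For $D(1,1)$, the vertex set is $\{0,1\}^2\setminus\{\bm{0},\bm{1}\}=\{(0,1),(1,0)\}$. These two vertices are adjacent, since $0+1\ge 1$ in both coordinates. So $D(1,1)\cong K_2$ is complete, and both parameters equal $0$.

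For $n=1$, the vertex set is $\{1,\dots,m-1\}$, and distinct $v,w$ are adjacent iff $v+w\ge m$. By definition this is exactly the threshold graph $T(\mathrm{id},m)$, so $\thdim(D(m))\le 1$ in all cases (cf.\ \cref{prop:generalized_disjointness_graph_upper_bound}). It remains to check completeness.
\begin{itemize}
\item For $m\in\{1,2\}$ there are at most one vertex, so the graph is trivially complete.
\item For $m=3$ the vertices are $1,2$, and $1+2\ge 3$, so the graph is $K_2$, which is complete.
\item For $m\ge 4$, the vertices $1$ and $2$ are distinct and $1+2=3<m$, so they are non-adjacent. The graph is not complete, hence $\boxicity(D(m))\ge 1$.
\end{itemize}
Combining these with $\boxicity\le\thdim\le 1$ gives $\boxicity(D(m))=\thdim(D(m))=1$ for $m\ge4$, and $0$ for $m\le 3$.

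There is no real obstacle. The only point of care is the degenerate case $m=1$ (empty vertex set), where the convention that a complete or empty graph has boxicity and threshold dimension $0$ must be applied.
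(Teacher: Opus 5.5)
Your proposal is correct and follows the paper's proof essentially verbatim: you observe that $D(1,1)\cong K_2$, note that $D(m)=T(\mathrm{id},m)$ is a threshold graph, and decide completeness by counting the vertices for $m\le 3$ and using the non-adjacent pair $1,2$ for $m\ge 4$. Your explicit remark about the empty-graph convention for $m=1$ makes precise a point the paper handles only implicitly.
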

	\begin{proof} For the first statement, note that $D(1,1)$ consists of two adjacent vertices, namely $(1,0)$ and $(0,1)$.
		
		For the second statement, we recall that $D(m)$ is a threshold graph. So its boxicity and threshold dimension are equal to $0$, if $D(m)$ constitutes a clique, and $1$ otherwise. The number of vertices in $D(1)$ and $D(2)$ equals $0$ and $1$, respectively, and the graph $D(3)$ contains precisely the two vertices $1$ and $2$, which are adjacent. For $m\ge 4$, $1$ and $2$ are non-adjacent vertices of $D(m)$.	
	\end{proof}
	
	\cref{lemma:lower_bound_n_minus_1} tells us that in all remaining cases, the boxicity and threshold dimension of $D(\bm{m})$ are at least $n-1$. We prove it in \cref{sec:lower_bound_n_minus_1}.
	\begin{lemma}\label{lemma:lower_bound_n_minus_1}
		Let $n\ge 2$ and let $\bm{m}\in\mathbb{N}_{\ge 1}^n$ with $\bm{m}\ne \bm{1}_2$. Then $\thdim(D(\bm{m}))\ge \boxicity(D(\bm{m}))\ge n-1$.
	\end{lemma}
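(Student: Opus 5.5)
The plan is to exhibit $n-1$ non-edges of $D(\bm{m})$ such that no single interval supergraph of $D(\bm{m})$ can delete two of them. By \cref{lemma:boxicity_interval_graphs}, any representation of $D(\bm{m})$ as an intersection of $d$ interval graphs must delete every non-edge, so this forces $d\ge n-1$. The inequality $\thdim\ge\boxicity$ was already noted, so only the boxicity bound needs proof.

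The basic tool is the induced-$C_4$ obstruction. Suppose $\{\bm a,\bm b\}$ and $\{\bm c,\bm d\}$ are non-edges of $D(\bm{m})$ and all four cross pairs $\bm a\bm c,\bm a\bm d,\bm b\bm c,\bm b\bm d$ are edges. An interval supergraph deleting both non-edges keeps the four cross edges, so it contains an induced $C_4$ on $\bm a,\bm c,\bm b,\bm d$. Interval graphs are chordal, so this is impossible.

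\textbf{Case $m_i\ge 2$.} For such a coordinate $i$ I would take the pair $\bm a_i=\bm m-m_i\bm e_i$ and $\bm b_i=\bm m-(m_i-1)\bm e_i$.
\begin{itemize}
\item Both are vertices: they are distinct from $\bm m$, and distinct from $\bm 0$ because $n\ge 2$.
\item They are non-adjacent, since their $i$-th coordinates sum to $1<m_i$.
\item Any vector equal to $m_j$ in every coordinate $j\ne i$ is adjacent to any vector equal to $m_k$ off a different coordinate $k$. Indeed, in every coordinate at least one summand is already full.
\end{itemize}
Hence pairs for distinct coordinates with $m_i\ge 2$ pairwise form induced $C_4$'s. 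If at least $n-1$ coordinates satisfy $m_i\ge2$, we are done.

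\textbf{Coordinates with $m_i=1$.} Here only the single vertex $\bm m-\bm e_i$ is full off coordinate $i$, so the pair must spend some slack in another coordinate. I would fix one sacrificial coordinate $k$, preferably one with $m_k\ge 2$, and use $\bm a_i=\bm m-\bm e_i-\bm e_k$ and $\bm b_i=\bm m-\bm e_i$. These are non-adjacent because their $i$-th coordinates sum to $0$.
\begin{itemize}
\item Pair $i$ against a full-off-$j$ pair: in coordinate $k$ the other vector is full, so the cross pairs are adjacent.
\item Two such pairs $i,i'$: the $k$-coordinates of $\bm a_i,\bm a_{i'}$ sum to $2m_k-2\ge m_k$, so the cross pairs are adjacent.
\end{itemize}
This yields $n-1$ mutually obstructing non-edges, one for each coordinate other than $k$, whenever some $m_k\ge 2$.

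\textbf{Main obstacle: $\bm m=\bm 1_n$ with $n\ge3$.} This is the disjointness graph $D_n$ by \cref{prop:D_n_iso}, and here the $C_4$ certificate fails. The natural pairs $A_i=\{i\}$ and $B_i=\{i,n\}$ for $i\in[n-1]$ give, on $A_i,B_i,A_j,B_j$, an induced $P_4$ $B_i\,A_j\,A_i\,B_j$ rather than a $C_4$, since $B_iB_j$ is also a non-edge.

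I would try to strengthen the obstruction using interval orders. Consider an interval supergraph $I$ that deletes $A_iB_i$ and $A_jB_j$ but keeps the path $B_i\,A_j\,A_i\,B_j$. Then in $I$ the intervals must appear essentially in the order $B_i,A_j,A_i,B_j$ with $B_i$ and $B_j$ at opposite ends. A third deleted non-edge $A_lB_l$ in the same $I$ would need $B_l$ at a third extreme position, since $A_i,A_j,A_l$ are pairwise adjacent. I expect this to give a contradiction, possibly after adding the vertex $[n]\setminus\{n\}$ or the sets $\{i,j\}$ to pin down positions. That would show each interval graph deletes at most two of the $n-1$ non-edges, which is too weak as it stands.

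Since it is too weak, I would instead aim at a different certificate for $D_n$: the induced subgraph on the singletons and the co-singletons $[n]\setminus\{i\}$, which is a clique $K_n$ with a pendant matching. I would show that each interval supergraph can ``free'' at most one singleton from its non-neighbouring co-singletons, with the pigeonhole count set up to give $n-1$ rather than $n$. This counting step is where I expect the real work to lie.
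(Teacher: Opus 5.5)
There is a genuine gap: the case $\bm{m}=\bm{1}_n$ with $n\ge 3$ is left open.

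Your constructions for $\bm{m}\neq\bm{1}_n$ are correct. These are the pairs that are full off coordinate $i$, together with the pairs $\bm{m}-\bm{e}_i-\bm{e}_k,\ \bm{m}-\bm{e}_i$ for $m_i=1$, with a fixed $k$ having $m_k\ge 2$ and $i$ ranging over $[n]\setminus\{k\}$. They give $n-1$ non-edges that pairwise induce $C_4$'s, and this also covers $n=2$.

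The open case is not a corner case. $D(\bm{1}_n)$ is an induced subgraph of every $D(\bm{m})$ (\cref{prop:lower_bound_via_D_n}), so for $n\ge 3$ it carries the whole lemma, and your other constructions become superfluous once it is settled.

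Your $C_4$ certificate cannot be rescued there. Non-edges $\{A,B\}$, $\{C,D\}$ of $D_n$ induce a $C_4$ only if $A\cap B\neq\emptyset\neq C\cap D$ and $(A\cup B)\cap(C\cup D)=\emptyset$. So a pairwise obstructing family has pairwise disjoint unions of size at least $2$, hence at most $\lfloor n/2\rfloor$ members. That is just the old $n/2$ bound.

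The fallback via singletons and co-singletons also fails as stated. That induced subgraph ($K_n$ with a pendant matching) has boxicity $2$. It is realized by two ``staircase'' interval graphs in which singleton $\{i\}$ meets co-singleton $[n]\setminus\{j\}$ iff $i\le j$, respectively iff $i\ge j$. Each of these frees exactly one singleton, so a bound of one freed singleton per interval graph yields nothing by itself: a singleton's non-edges can be split among several interval graphs. You would need constraints coming from other vertices of $D_n$, which you have not identified.

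The missing idea is to replace the pairwise-conflict certificate with an averaging bound. The paper counts the $n(n-1)$ non-edges $\{\{a\},\{a,b\}\}$; write $\ell_S$ for the left endpoint of $I_S$.

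\begin{enumerate}
\item Suppose an interval supergraph deletes one of the two non-edges belonging to $\{a,b\}$, say $I_{\{a\}}\cap I_{\{a,b\}}=\emptyset$. Then for every pair $\{c,d\}$ disjoint from $\{a,b\}$, the intervals of $\{c\},\{d\},\{c,d\}$ meet both $I_{\{a\}}$ and $I_{\{a,b\}}$. So they all cover the gap between them and pairwise intersect. Hence the pairs touched in this way are pairwise intersecting (a star or a triangle), and there are at most $n-1$ of them for $n\ge 4$.
\item A pair whose two non-edges are both deleted has $I_{\{a,b\}}$ entirely to one side of $I_{\{a\}}\cup I_{\{b\}}$. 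At most one such pair lies on each side. Indeed, two such pairs intersect by the previous point; if $\{a,b\}$ and $\{a,c\}$ both lay on the left, then $I_{\{c\}}$ must reach $I_{\{a,b\}}$, forcing $\ell_{\{c\}}<\ell_{\{b\}}$. Then $I_{\{a,c\}}$ would end before $I_{\{b\}}$ starts, although $\{a,c\}\cap\{b\}=\emptyset$.
\end{enumerate}

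Thus each interval graph deletes at most $(n-1)+2=n+1$ of these $n(n-1)$ non-edges. This gives $d\ge n(n-1)/(n+1)>n-2$, i.e.\ $d\ge n-1$. The case $n=3$ is the net, of boxicity $2$.
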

	Hence, it remains to distinguish the cases in which the boxicity/threshold dimension are $n$, and those in which they are $n-1$. Together with \cref{lemma:integral_covering_special_cases}, \cref{theorem:boxicity_integral_covering,theorem:thdim_integral_covering} provide a full characterization of the boxicity and threshold dimension of integral covering graphs.
	\begin{theorem}\label{theorem:boxicity_integral_covering}
		Let $n\ge 2$ and let $\bm{m}\in \mathbb{N}_{\ge 1}^n$. Then \[\boxicity(D(\bm{m}))=\begin{cases}
			n & \exists i\in [n]\colon m_i \ge  4\\
			n & \exists i\in [n] \colon m_i=3 \text{ and } \exists j\in [n]\setminus \{i\}\colon m_j\ge 2\\
			n-1 & \exists i\in [n]\colon m_i=3 \text{ and } \forall j\in [n]\setminus \{i\}\colon m_j=1\\
			n & \bm{m}= \bm{2}_n\\
			n-1 & \bm{m}\in\{1,2\}^n \text{ and } \bm{m}\ne \bm{2}_n \text{ and } \bm{m}\ne \bm{1}_2\\
			0 & \bm{m}=\bm{1}_2
		\end{cases}.\]
	\end{theorem}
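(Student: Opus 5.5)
The plan is to combine the earlier bounds into matching upper and lower bounds case by case. Two general facts do most of the work. By \cref{prop:generalized_disjointness_graph_upper_bound} we always have $\boxicity(D(\bm{m}))\le n$, and by \cref{lemma:lower_bound_n_minus_1} we have $\boxicity(D(\bm{m}))\ge n-1$ whenever $\bm{m}\ne\bm{1}_2$. The case $\bm{m}=\bm{1}_2$ is already settled by \cref{lemma:integral_covering_special_cases}. So in each remaining case it suffices either to rule out $n-1$ interval graphs, or to construct $n-1$ of them.

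\textbf{Lower bound $n$ (first, second and fourth cases).} Suppose $\boxicity(D(\bm{m}))=n-1$, and fix interval graphs $I_1,\dots,I_{n-1}$ realising it. For each coordinate $j$ we choose a set of non-edges that ``belong'' to coordinate $j$, meaning they fail the covering condition only in coordinate $j$. We then show that no single interval graph can delete non-edges belonging to two different coordinates. The natural tool is to find, for $j\ne k$, four vertices whose induced subgraph in any interval supergraph deleting both non-edges would contain an induced $C_4$; this is the interval analogue of the argument in \cref{theorem:partial_join}. Pigeonhole over $n$ coordinates and $n-1$ interval graphs then gives a contradiction.

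We also need that each coordinate really does require a deletion. For $m_i\ge 4$, the non-adjacent pair $1,2$ of $D(m_i)$ lifts to vertices with $1$ and $2$ in coordinate $i$ and the top value in every other coordinate. For $m_i=3$ together with some $m_j\ge 2$, and for $\bm{m}=\bm{2}_n$, we instead use vertices with middle values $1$ in two coordinates, which produces extra non-edges beyond those in $D(\bm{1}_n)$. Concretely, I would take $\bm{m}=\bm{2}_n$ and look at vertices $\bm{v}$ with $v_i=1$ and all other entries in $\{0,2\}$; the pair $(1,\bm{2}-\bm{e})$ is a non-edge in coordinate $i$ only. I expect this to be the main obstacle: it means identifying, in each of the three situations, a suitable ``$n$-th'' family of obstructions and verifying the $C_4$ structure so that one interval graph cannot serve two coordinates.

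\textbf{Upper bound $n-1$ (third and fifth cases).} Here we give explicit constructions.
\begin{itemize}
\item For $\bm{m}\in\{1,2\}^n\setminus\{\bm{2}_n,\bm{1}_2\}$, pick a coordinate $k$ with $m_k=1$. We merge the thresholds for coordinate $k$ and one other coordinate $l$ into a single interval graph. Each vertex gets an interval whose left endpoint depends on $v_k$ and whose right endpoint depends on $v_l$, so that two intervals meet exactly when $v_k+w_k\ge 1$ and $v_l+w_l\ge m_l$. This should be possible because in coordinate $k$ the constraint only fails for pairs with $v_k=w_k=0$. The remaining $n-2$ coordinates each use their own threshold graph.
\item For $m_i=3$ with all other $m_j=1$, we do the same: merge coordinate $i$ with some coordinate $j$, exploiting that $D(3)$ is a clique so coordinate $i$ alone imposes few non-edges. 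If $n=2$, then $\bm{m}=(3,1)$, and we must show $D(3,1)$ is an interval graph; this can be checked directly.
\end{itemize}
The delicate part is correctness on vertices with extreme coordinates $0$ or $m$, so I would check the merged interval graph on the few vertex types separately.
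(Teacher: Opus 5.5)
Reducing the theorem to a choice between $n$ and $n-1$ is right, but both halves of that choice have a genuine gap.

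\textbf{Lower bound $n$.} Your pigeonhole needs, for each coordinate, a non-edge such that any two of them (from different coordinates) span an induced $K_{2,2}$ in $D(\bm m)$. Taking one non-edge per coordinate, this is exactly an induced $n$-fold join of $\overline{K_2}$. Such a join exists if some $m_i\ge 4$ or if all $m_i\ge 2$; this is what the paper uses in those cases. It does \emph{not} exist in the second case when no entry exceeds $3$, exactly one entry equals $3$, and some entry equals $1$, e.g.\ $\bm m=(1,2,3)$. First, two non-edges of the join cannot fail in a common coordinate $c$, since their two cross sums at $c$ would total less than $2m_c$. So the three non-edges fail in coordinates $1,2,3$, one each, and the values are forced as follows.
\begin{itemize}
\item The non-edge failing in coordinate $1$ has both endpoints $0$ there, so the other four vertices are $1$ there.
\item The non-edge failing in coordinate $2$ has an endpoint that is $0$ there, so the other four vertices are $2$ there.
\item The endpoints of the non-edge failing in coordinate $3$ therefore agree in coordinates $1,2$. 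They must differ in coordinate $3$ with sum at most $2$, so one is $0$ there, and all other vertices are $3$ there.
\end{itemize}
Now both endpoints of the first non-edge equal $(0,2,3)$, a contradiction. So no choice of ``$n$-th family'' makes your $C_4$ scheme work here. The paper (\cref{lemma:boxicity_3_2}) uses a propagation argument instead. An $(n-1)$-partite join with three-element parts fixes, via \cref{prop:C_4_and_d_partite_graph}, which interval graph deletes which part. Two further non-edges are then each forced into the last interval graph by $C_4$'s with \emph{different} non-edges inside the parts, and these two non-edges themselves span a $K_{2,2}$.

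\textbf{Upper bound $n-1$.} Merging two coordinates $k,l$ into one interval graph while keeping $T(\mu_c,m_c)$ for the other $n-2$ coordinates cannot work, in either of your bullets. Your stated rule is not interval even for $n=3$: in $D(\bm 1_3)$ with $k=1$, $l=2$, it makes $(1,0,0),(0,1,0),(1,0,1),(0,1,1)$ an induced $C_4$. For $n\ge 4$ no merged interval graph exists at all. Let $\bm e_t$ be the $t$-th unit vector and take distinct $c,c'\notin\{k,l\}$. The non-edge $\{\bm m-m_k\bm e_k-m_c\bm e_c,\ \bm m-m_k\bm e_k\}$ fails only at $k$, and $\{\bm m-m_l\bm e_l-m_{c'}\bm e_{c'},\ \bm m-m_l\bm e_l\}$ fails only at $l$. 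The merged graph must therefore delete both, yet together they span a $K_{2,2}$ in $D(\bm m)$. The missing idea is to give a coordinate $n$ with $m_n=1$ no interval graph of its own and spread its non-edges over all $n-1$ graphs. Each $H_i$ encodes coordinate $i$ refined by $v_n$ (\cref{lemma:box_th_1_2}; in \cref{lemma:box_D_3_1} it is additionally split by $v_1\le 1$ versus $v_1\ge 2$ for the coordinate with value $3$). A non-edge with $v_n=w_n=0$ is then deleted by some $H_i$, because $(m_1,\dots,m_{n-1},0)$ is the only vertex that is full in every coordinate except the last.
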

	\begin{theorem}\label{theorem:thdim_integral_covering}
		Let $n\ge 2$ and let $\bm{m}\in \mathbb{N}_{\ge 1}^n$. Then \[\thdim(D(\bm{m}))=\begin{cases}
			n & \bm{m}\ne \bm{1}_n\\
			0 & \bm{m}= \bm{1}_n \text{ and } n=2\\
			n-1 & \bm{m}= \bm{1}_n \text{ and } n\ge 3
		\end{cases}.\]
	\end{theorem}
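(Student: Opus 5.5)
For $\bm{m}\ne\bm{1}_n$ we will show $\thdim(D(\bm{m}))=n$ with an explicit partial join. For $\bm{m}=\bm{1}_n$ we will combine \cref{lemma:lower_bound_n_minus_1} with an explicit family of $n-1$ threshold graphs. The case $\bm{m}=\bm{1}_2$ is already covered by \cref{lemma:integral_covering_special_cases}: $D(1,1)$ is a single edge.

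\emph{Case $\bm{m}\ne \bm{1}_n$.} The upper bound $n$ is \cref{prop:generalized_disjointness_graph_upper_bound}. For the lower bound, permute coordinates so that $m_1\ge 2$; this is allowed because $D(\bm{m})$ is invariant under permuting coordinates. Write $\bm{e}_i$ for the unit vectors. For each $i\in[n]$ let $\bm{x}_i\coloneqq \bm{m}-m_i\bm{e}_i$. Let $\bm{y}_1\coloneqq \bm{m}-(m_1-1)\bm{e}_1$, and for $i\ge 2$ let $\bm{y}_i\coloneqq \bm{m}-m_i\bm{e}_i-\bm{e}_1$. Because $n\ge 2$ and $m_1\ge 2$, all these $2n$ vectors are distinct and lie in $V(D(\bm{m}))$, that is, none of them equals $\bm{0}$ or $\bm{m}$.

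Within group $i$, the pair $\bm{x}_i,\bm{y}_i$ is a non-edge. For $i\ge 2$ this holds because both vectors have $i$-th coordinate $0$. For $i=1$ it holds because the first coordinates sum to $0+1<m_1$. So each group induces $\overline{K_2}$, whose threshold dimension is $1$.

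Next we check the pairs between different groups by comparing coordinates.
\begin{itemize}
\item Two vectors $\bm{x}_i,\bm{x}_j$ are always adjacent.
\item For $i,j\ge 2$, the vectors $\bm{y}_i,\bm{y}_j$ are adjacent, since $2(m_1-1)\ge m_1$.
\item The vectors $\bm{y}_1,\bm{y}_j$ are adjacent, since $1+(m_1-1)=m_1$.
\item The vectors $\bm{x}_i,\bm{y}_j$ are adjacent unless $i=1$ and $j\ge 2$. In that case the first coordinates sum to $m_1-1$, so they are non-adjacent.
\end{itemize}
Hence the only non-edges between groups are the pairs $\{\bm{x}_1,\bm{y}_j\}$ for $j\ge 2$, one per pair of groups, and these trivially form matchings. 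The induced subgraph on these $2n$ vertices is therefore a partial join of $n$ copies of $\overline{K_2}$. By \cref{theorem:partial_join} its threshold dimension is at least $n$. Threshold dimension is monotone under induced subgraphs, since we can restrict the threshold graphs, so $\thdim(D(\bm{m}))\ge n$.

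\emph{Case $\bm{m}=\bm{1}_n$, $n\ge 3$.} The lower bound $n-1$ is \cref{lemma:lower_bound_n_minus_1}. For the upper bound we work with $D_n$ via \cref{prop:D_n_iso}, so vertices are non-empty proper subsets $A\subseteq[n]$. For $i\in[n-1]$ let $H_i\coloneqq T(\alpha_i,3)$ with
\[
\alpha_i(A)\coloneqq 2\cdot[i\notin A]+[n\notin A].
\]

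First, every edge of $D_n$ lies in every $H_i$. Let $A,B$ be disjoint. If neither contains $i$, then $\alpha_i(A)+\alpha_i(B)\ge 4$. Otherwise, say $i\in A$. Then $\alpha_i(B)\ge 2$. Either $n\notin A$, giving $\alpha_i(A)=1$, or $n\in A$, which forces $n\notin B$ and hence $\alpha_i(B)=3$. In both cases the sum is at least $3$.

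Second, every non-edge is deleted by some $H_i$. If $A\cap B$ contains some $i\le n-1$, then $\alpha_i(A)+\alpha_i(B)\le 2$, so $H_i$ deletes the pair. Otherwise $A\cap B=\{n\}$. Since $A\ne B$, one of the two sets, say $B$, contains some $i\le n-1$, and then $i\notin A$. This gives $\alpha_i(A)=2$ and $\alpha_i(B)=0$, so $H_i$ deletes the pair. Hence $D_n=\bigcap_{i\in[n-1]}H_i$ and $\thdim(D_n)\le n-1$.

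The main obstacle is finding suitable witnesses for the lower bound in the case $\bm{m}\ne\bm{1}_n$. The construction has to exploit $m_1\ge 2$, and all cross non-edges must be routed through $\bm{x}_1$ so that they form matchings.
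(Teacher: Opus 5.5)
Your proof is correct and follows essentially the same route as the paper. For $\bm{m}\ne\bm{1}_n$ you use a partial join of $n$ copies of $\overline{K_2}$, as in \cref{lemma:thdim_at_least_one_2}, and your $\bm{y}_1$ is a harmless variant of the paper's. For $\bm{m}=\bm{1}_n$ with $n\ge 3$, your weights $\alpha_i/3$ coincide with the paper's $\rho_i$ from \cref{lemma:box_th_1_2}, except that the value $1/6$ is replaced by $0$.
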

	We develop the corresponding upper bounds in \cref{sec:integral_covering_upper_bounds}, and the necessary lower bounds in \cref{sec:intergral_covering_lower_bounds}. We then prove \cref{theorem:boxicity_integral_covering} in \cref{sec:proof_boxicity_integral_covering} and \cref{theorem:thdim_integral_covering} in \cref{sec:proof_thdim_integral_covering}.
	
	\subsection{Proof of \cref{lemma:lower_bound_n_minus_1}}\label{sec:lower_bound_n_minus_1}
	We first handle the case $n=2$. As $\bm{m}\ne \bm{1}_2$, we must have $m_1\ge 2$ or $m_2\ge 2$. Hence, $(1,0)$ and $(0,1)$ form two non-adjacent vertices of $D(\bm{m})$, implying that $\thdim(D(\bm{m}))\ge \boxicity(D(\bm{m}))\ge 1$.
	
	By \cref{prop:lower_bound_via_D_n}, it remains to show that $\boxicity(D(\bm{1}_n))\ge n-1$ for $n\ge 3$. The graph $D(\bm{1}_3)$, which is depicted in \cref{figure:D_3}, is known to have boxicity $2$.
	
	\begin{figure}
		\centering
		\begin{tikzpicture}
			\node (A) at ({cos(-30)},{sin(-30)}) {$(0,1,1)$};
			\node (B) at ({cos(90)},{sin(90)}) {$(1,0,1)$};
			\node (C) at ({cos(210)},{sin(210)}) {$(1,1,0)$};
			\draw[thick] (A)--(B)--(C)--(A);
			\node (A1) at ({3*cos(-30)},{3*sin(-30)}) {$(1,0,0)$};
			\node (B1) at ({2.5*cos(90)},{2.5*sin(90)}) {$(0,1,0)$};
			\node (C1) at ({3*cos(210)},{3*sin(210)}) {$(0,0,1)$};
			\draw[thick] (A)--(A1);
			\draw[thick] (B)--(B1);
			\draw[thick] (C)--(C1);
		\end{tikzpicture}
		\caption{The graph $D(\bm{1}_3)$.}\label{figure:D_3}
	\end{figure}
	
	So we may assume that $n\ge 4$ in the following.
	By \cref{prop:lower_bound_via_D_n,prop:D_n_iso}, it suffices to prove that $\boxicity(D_n)\ge n-1$, where $D_n$ denotes the disjointness graph on the vertex set $2^{[n]}\setminus\{\emptyset,[n]\}$.

	In order to prove a lower bound on the boxicity of $D_n$, we will show that each interval graph cannot delete `too many' of the non-edges in $D_n$. In doing so, we will, in fact only focus on the non-edges between pairs $\{a,b\}\subseteq [n]$ and the contained singleton sets $\{a\}$ and $\{b\}$. 
	More precisely, let $H$ be an interval graph on the vertex set $V(D_n)$ that contains $D_n$ as a subgraph.
	We say that $H$ \emph{handles} a pair $\{a,b\}\in \binom{[n]}{2}$ if neither $\{\{a\},\{a,b\}\}$ nor $\{\{b\},\{a,b\}\}$ is contained in $E(H)$.
	We say that $H$ \emph{touches} the pair $\{a,b\}$ if at most one of $\{\{a\},\{a,b\}\}$ nor $\{\{b\},\{a,b\}\}$ is contained in $E(H)$.
	In the remainder of this section, when we say `an interval graph', we mean an interval graph on the vertex set $V(D_n)$ that forms a supergraph of $D_n$.
	
	\begin{lemma}\label{lemma:disjoint_pairs}
		An interval graph cannot touch two disjoint pairs in $\binom{[n]}{2}$.
	\end{lemma}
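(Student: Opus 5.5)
The plan is to argue by contradiction and exhibit an induced $C_4$ in $H$. This is impossible, since interval graphs are chordal: an interval graph has no induced cycle of length at least $4$.

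Suppose an interval graph $H$ (a supergraph of $D_n$ on $V(D_n)$, with $n\ge 4$) touches two disjoint pairs $\{a,b\}$ and $\{c,d\}$ in $\binom{[n]}{2}$. By the definition of touching, at least one of $\{\{a\},\{a,b\}\}$ and $\{\{b\},\{a,b\}\}$ is missing from $E(H)$. After possibly swapping the names $a$ and $b$, we may assume $\{\{a\},\{a,b\}\}\notin E(H)$. Similarly, after possibly swapping $c$ and $d$, we may assume $\{\{c\},\{c,d\}\}\notin E(H)$. All four sets $\{a\},\{c\},\{a,b\},\{c,d\}$ are non-empty proper subsets of $[n]$, since $n\ge 4$. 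So they are four distinct vertices of $D_n$.

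Next I check the edges. Since $a,b,c,d$ are pairwise distinct, the pairs
\[
\{a\},\{c\};\qquad \{c\},\{a,b\};\qquad \{a,b\},\{c,d\};\qquad \{c,d\},\{a\}
\]
are pairwise disjoint sets. Hence they are edges of $D_n$, and therefore edges of $H \supseteq D_n$. Together these four edges form the cycle $\{a\}\,\{c\}\,\{a,b\}\,\{c,d\}\,\{a\}$ in $H$. The only possible chords of this cycle are $\{\{a\},\{a,b\}\}$ and $\{\{c\},\{c,d\}\}$, and both are absent from $E(H)$ by assumption. So $H[\{\{a\},\{c\},\{a,b\},\{c,d\}\}]\cong C_4$, contradicting chordality of $H$.

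There is no real obstacle here. The only point needing care is the symmetry reduction, which lets us fix which of the two deleted non-edges is used for each pair. It is valid because the argument only uses that $a,b,c,d$ are distinct.
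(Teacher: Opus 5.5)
Your proof is correct: with the chosen non-edges, $\{a\},\{c\},\{a,b\},\{c,d\}$ induce a $C_4$ in $H$, which an interval graph cannot contain. This is essentially the paper's argument. The paper unfolds the same fact directly in the interval representation: $I_{\{a\}}$ and $I_{\{a,b\}}$ are disjoint, and each of $I_{\{c\}}$, $I_{\{d\}}$, $I_{\{c,d\}}$ meets both, so all three span the gap between them and pairwise intersect, which forces $\{\{c\},\{c,d\}\},\{\{d\},\{c,d\}\}\in E(H)$.
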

	\begin{proof}
		Let $\{a,b\}$ and $\{c,d\}$ be two disjoint pairs in $[n]$.
		Let $H$ be an interval graph that touches $\{a,b\}$.
		Let $\mathcal{I}$ be an interval representation of $H$, and assume without loss of generality that the intervals $I_{\{a\}}$ and $I_{\{a,b\}}$ are disjoint.
		As $H$ is a supergraph of $D_n$ and $\{a,b\} \cap \{c,d\} = \emptyset$, we know that each of the intervals $I_{\{c\}}$, $I_{\{d\}}$, and $I_{\{c,d\}}$, intersect each of the intervals $I_{\{a\}}$ and $I_{\{a,b\}}$.
		In particular, they all cover the space between  $I_{\{a\}}$ and $I_{\{a,b\}}$, so they are pairwise intersecting.
		Hence, $H$ cannot touch $\{c,d\}$.
	\end{proof}
	
	\begin{lemma}\label{lemma:pairwise_intersecting_pairs}
		An interval graph can touch at most $n-1$ pairs in $\binom{[n]}{2}$.
	\end{lemma}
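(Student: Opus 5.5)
By \cref{lemma:disjoint_pairs}, the family $\mathcal{T}\subseteq\binom{[n]}{2}$ of pairs touched by a fixed interval graph $H$ is pairwise intersecting. An intersecting family of $2$-subsets of $[n]$ is either a star (all pairs share a common element) or a subfamily of a triangle $\binom{\{a,b,c\}}{2}$. Indeed, if two members $\{a,b\},\{a,c\}$ share $a$ and some member avoids $a$, that member must meet both, so it is $\{b,c\}$. Any further pair must then meet $\{a,b\}$, $\{a,c\}$ and $\{b,c\}$, which forces it into $\{a,b,c\}$. A star has at most $n-1$ pairs, and a triangle has $3\le n-1$ pairs whenever $n\ge 4$, which is our standing assumption. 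So $|\mathcal{T}|\le n-1$ in both cases.

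The key steps are therefore:
\begin{enumerate}
\item invoke \cref{lemma:disjoint_pairs} to get that $\mathcal{T}$ is intersecting;
\item prove the star/triangle dichotomy for intersecting graphs on $[n]$, viewing $\mathcal{T}$ as the edge set of a graph with no two disjoint edges, which must be a star or a triangle;
\item bound both cases by $n-1$, using $n\ge 4$ in the triangle case.
\end{enumerate}

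There is no real obstacle here. The only point needing care is the dichotomy in step 2, together with remembering that the triangle case uses $n\ge 4$; for $n=3$ the bound $3\le 2$ would fail, which is why $n=3$ was treated separately. If a stronger statement were needed, e.g.\ for handled rather than touched pairs, a more geometric argument would be required, but the plain counting bound follows directly from the dichotomy.
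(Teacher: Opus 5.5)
Your proposal is correct and follows essentially the same route as the paper. Both arguments use \cref{lemma:disjoint_pairs} to see that the touched pairs form an intersecting family. Both then observe that this family is either contained in the triangle on $\{a,b,c\}$, giving size $3\le n-1$ under the standing assumption $n\ge 4$, or is a star, giving size at most $n-1$.
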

	\begin{proof}
		Let $\mathcal{F}\subseteq \binom{[n]}{2}$ be the set of pairs touched by an interval graph. 
		By Lemma~\ref{lemma:disjoint_pairs}, $\mathcal{F}$ consist of pairwise intersecting pairs.
		If $|\mathcal{F}|\le 1$, there is nothing to show, so suppose that $\mathcal{F}$ contains two intersecting pairs $\{a,b\}$ and $\{a,c\}$.
		If $\{b,c\}\in \mathcal{F}$, then $\mathcal{F}$ cannot contain any further pair, so $|\mathcal{F}|=3\le n-1$.
		If $\{b,c\}\notin\mathcal{F}$, then all pairs in $\mathcal{F}$ are of the form $\{a,d\}$ for some $d\in[n]\setminus \{a\}$ because the only pair not of this form intersecting both $\{a,b\}$ and $\{a,c\}$ is $\{b,c\}$.
		Again, $|\mathcal{F}|\le n-1$.
	\end{proof}
	
	By analyzing the ways intervals can be placed in order to handle pairs, we further prove the following result.
	\begin{lemma}\label{lemma:handle_at_most_2}
		An interval graph can handle at most $2$ pairs in $\binom{[n]}{2}$.
	\end{lemma}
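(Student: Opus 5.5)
The plan is to attach to each handled pair a side, left or right, and then show by pigeonhole that three handled pairs would force two incompatible orderings of interval endpoints. Let $H$ be an interval graph (a supergraph of $D_n$ on $V(D_n)$) with interval representation $\mathcal{I}$. Write $I_x=[l_x,r_x]$ for $I_{\{x\}}$ and $I_{xy}$ for $I_{\{x,y\}}$.

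\emph{Step 1: a common point of the singleton intervals.} For distinct $x,y\in[n]$, the sets $\{x\}$ and $\{y\}$ are disjoint. Hence they are adjacent in $D_n$, and so $I_x\cap I_y\neq\emptyset$. The singleton intervals are therefore pairwise intersecting. By the Helly property of intervals, there is a point $p$ lying in every $I_x$, $x\in[n]$.

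\emph{Step 2: each handled pair gets a side.} Suppose $H$ handles $\{x,y\}$. Then $I_{xy}$ is disjoint from both $I_x$ and $I_y$, and both of these contain $p$. Hence $I_{xy}$ lies entirely to the right of $p$ or entirely to the left of $p$; we call this the side of $\{x,y\}$.
\begin{itemize}
\item If the side is right, then $I_{xy}$ lies strictly to the right of $r_x$ and of $r_y$. For any $z\notin\{x,y\}$, the sets $\{z\}$ and $\{x,y\}$ are disjoint, so $I_z$ meets $I_{xy}$. This forces $r_z>\max(r_x,r_y)$.
\item Symmetrically, if the side is left, then $l_z<\min(l_x,l_y)$ for every $z\notin\{x,y\}$.
\end{itemize}

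\emph{Step 3: two pairs sharing an element cannot have the same side.} Let $\{x,y\}$ and $\{x,z\}$ be handled pairs with $y\ne z$, both of side right. Step 2 applied to $\{x,y\}$ with the outside element $z$ gives $r_z>r_y$. Applied to $\{x,z\}$ with the outside element $y$, it gives $r_y>r_z$. This is a contradiction. The left case is identical, using left endpoints.

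\emph{Step 4: conclusion.} Suppose $H$ handled three pairs. Each handled pair is in particular touched, so by \cref{lemma:disjoint_pairs} the three pairs pairwise intersect. Any two distinct intersecting pairs share exactly one element. By pigeonhole, two of the three pairs have the same side, which contradicts Step 3. Hence $H$ handles at most $2$ pairs.

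I expect Step 2 to be the main obstacle: seeing that loneliness of $I_{xy}$ relative to the common point $p$ yields a strict endpoint inequality for every outside singleton. Once that is in place, the rest is bookkeeping. The argument uses only $n\ge 3$, so it certainly covers the case $n\ge 4$ considered here.
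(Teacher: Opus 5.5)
Your proof is correct and follows essentially the same route as the paper. Both split handled pairs into left and right ones, use \cref{lemma:disjoint_pairs} to reduce to two same-side pairs $\{x,y\},\{x,z\}$ sharing an element, and get contradictory endpoint inequalities from the fact that the interval of an outside singleton must meet the pair's interval. Your Helly point $p$ makes explicit the left/right dichotomy that the paper asserts without justification. Also, your Step~2 inequality is strong enough on its own to rule out any two distinct same-side pairs: pick $u\in P\setminus P'$ and $v\in P'\setminus P$ to get $r_v>r_u>r_v$. So \cref{lemma:disjoint_pairs} is not actually needed.
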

	\begin{proof}
		Let $H$ be an interval graph and $\mathcal{I}$ be its interval representation.
		For each set $S$, we denote its interval by $I_S \eqqcolon [\ell_{S}, r_{S}]$.
		As $H$ is a supergraph of $D_n$, we have
		\begin{equation}\label{eq:disjoint_set_intersect}
			I_S \cap I_{S'} \neq \emptyset \text{ if } S \cap S' = \emptyset.
		\end{equation}
		
		We say that a pair $\{a,b\}$ is left-handled (respectively, right-handled) if $H$ handles $\{a,b\}$, and $I_{\{a,b\}}$ is entirely to the left (respectively, to the right) of $I_{\{a\}}$ and $I_{\{b\}}$
		We show that $\Iscr$ can left-handle at most one pair.
		An analogous argument applies to right-handled pairs, and since every handled pair is either left- or right-handled, this proves the lemma.
		
		Assume that for a  contradiction that $\Iscr$ left-handles two distinct pairs.
		By \cref{lemma:disjoint_pairs}, these two pairs must be of the form $\{a,b\}$ and $\{a,c\}$.
		As $\{a,b\}$ is left-handled, we have
		\begin{equation}\label{eq:int_a_int_b_right_int_ab}
			r_{\{a,b\}} < \min \{\ell_{\{a\}}, \ell_{\{b\}}\}.
		\end{equation}
		By \eqref{eq:disjoint_set_intersect}, the interval $I_{\{c\}}$ intersects each of the intervals $I_{\{a\}}$, $I_{\{b\}}$, and $I_{\{a,b\}}$.
		Together with \eqref{eq:int_a_int_b_right_int_ab}, this implies
		\begin{equation}\label{eq:int_c_in_between_pair_a_b}
			\ell_{\{c\}} \leq r_{\{a,b\}} < \min \{\ell_{\{a\}}, \ell_{\{b\}}\} \leq \ell_{\{b\}}.
		\end{equation}
		Now consider the second pair $\{a,c\}$.
		As it is also left-handled, we have
		\begin{equation}\label{eq:int_a_int_c_right_int_ac}
			r_{\{a,c\}} < \min \{\ell_{\{a\}}, \ell_{\{c\}}\}.
		\end{equation}
		Combining \eqref{eq:int_c_in_between_pair_a_b} and \eqref{eq:int_a_int_c_right_int_ac}, we obtain
		\begin{equation*}
			r_{\{a,c\}} < \ell_{\{c\}} \leq \ell_{\{b\}},
		\end{equation*}
		which shows that $I_{\{a,c\}}$ and $I_{\{b\}}$ are disjoint, contradicting \eqref{eq:disjoint_set_intersect}.
	\end{proof}
	
	We are now ready to complete the proof of \cref{lemma:lower_bound_n_minus_1} and show that the boxicity of $D_n$ is at least $n-1$ for $n\ge 4$.
	Let $d \coloneqq \boxicity(D_n)$, and let $H_1,\dots,H_d$ be interval graphs on the vertex set $V(D_n)$ that constitute supergraphs of $D_n$ and whose intersection is $D_n$.
	In total, there are $\binom{n}{2}\cdot 2=n\cdot(n-1)$ many non-edges of the form $\{\{a\},\{a,b\}\}$ that need to be deleted by the interval graphs $H_1,\dots,H_d$.
	For an interval graph $H_i$, the number of these non-edges deleted by $H_i$ is equal to the number of pairs $H_i$ touches plus the number of pairs that it handles because for every pair that is touched but not handled, exactly one non-edge is deleted, and for every pair that is handled (and touched), both of its non-edges are deleted.
	By \cref{lemma:disjoint_pairs,lemma:handle_at_most_2}, each $H_i$ deletes at most $n-1+2=n+1$ non-edges. This implies \[d\ge \frac{n\cdot(n-1)}{n+1}=\frac{(n+1)\cdot(n-2)+2}{n+1}=n-2+\frac{2}{n+1}>n-2.\]
	As $d$ is an integer, this implies $d\ge n-1$, so $D_n$ has boxicity at least $n-1$.
	\subsection{Upper bounds on boxicity and threshold dimension\label{sec:integral_covering_upper_bounds}}
	\begin{lemma}\label{lemma:box_th_1_2}
		Let $G=(V,E)$ be the intersection of $n\ge 2$ threshold graphs $T(\mu_i,m_i)_{i=1}^n$, where $m_i\in \{1,2\}$ for $i\in [n-1]$, $m_n=1$, and $\mu_i\colon V\rightarrow[m_i]_0$. Suppose further that there exists at most one $v\in V$ such that $\mu_i(v)=m_i$ for $i\in [n-1]$ and $\mu_n(v)=0$. Then we have $\boxicity(G)\le n-1$. If further $m_i=1$ for all $i\in [n]$, then also $\thdim(G)\le n-1$.
	\end{lemma}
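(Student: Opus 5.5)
The plan is to keep $n-2$ of the threshold graphs essentially unchanged and to absorb the last one, $T(\mu_n,1)$, into the remaining coordinates. Let $A\coloneqq\{v\in V\colon \mu_n(v)=0\}$. Since $m_n=1$, the non-edges of $T(\mu_n,1)$ are exactly the pairs inside $A$. Every edge of $G$ incident to a vertex of $A$ goes to a vertex outside $A$, and $A$ is independent in $G$.

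\textbf{Assigning $A$ to coordinates.} Let $v^*$ be the (at most one) vertex with $\mu_i(v^*)=m_i$ for all $i\in[n-1]$ and $\mu_n(v^*)=0$. Every other $v\in A$ has some index $i(v)\in[n-1]$ with $\mu_i(v)<m_i$, and we fix one such index. For each $i\in[n-1]$ we will build an interval graph $J_i$ with three properties:
\begin{enumerate}
\item $E(G)\subseteq E(J_i)\subseteq E(T(\mu_i,m_i))$.
\item $J_i$ deletes every pair $\{v,w\}\subseteq A\setminus\{v^*\}$ with $i(v)=i(w)=i$.
\item $J_i$ deletes every pair $\{v^*,w\}$ with $w\in A$ and $i(w)=i$.
\end{enumerate}
Let $v,w\in A\setminus\{v^*\}$ be distinct. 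Then either $i(v)=i(w)$, and some $J_i$ deletes the pair directly, or $i(v)\neq i(w)$. Is the second case covered? If $i(v)\neq i(w)$ we still need some coordinate to delete $\{v,w\}$. To arrange this, I would place \emph{all} vertices of $A$ with $\mu_i<m_i$ (not only those assigned to $i$) as pairwise distinct points in $J_i$. Since $w$ has $\mu_{i(w)}(w)<m_{i(w)}$, both $v$ and $w$ are then distinct points in $J_{i(w)}$, so that coordinate deletes $\{v,w\}$. The pairs $\{v^*,w\}$ are deleted in $J_{i(w)}$ by the third property. Together with the first property, $\bigcap_{i\in[n-1]}E(J_i)=E(G)$, which gives $\boxicity(G)\le n-1$.

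\textbf{Construction of $J_i$.} Fix $i$ and suppose $m_i=2$. Vertices outside $A$ receive the standard threshold representation:
\begin{itemize}
\item value $2$ gets $[-2,10]$;
\item value $1$ gets $[4,6]$;
\item value $0$ gets the point $1$.
\end{itemize}
Vertices of $A$ receive the following:
\begin{itemize}
\item value $0$ gets pairwise distinct points in $(-1,0)$;
\item value $1$ gets pairwise distinct points in $(4.5,6)$;
\item value $2$, other than $v^*$, gets $[-2,10]$;
\item $v^*$ gets $[0.5,4.5]$.
\end{itemize}

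We then check the required adjacencies. Every $G$-edge $\{v,w\}$ with $v\in A$ has $w\notin A$ and $\mu_i(v)+\mu_i(w)\ge 2$, and the corresponding intervals intersect. Edges between two vertices outside $A$ are unaffected, because there the representation is the standard one. Conversely, every intersecting pair either satisfies $\mu_i(v)+\mu_i(w)\ge 2$ or is a pair inside $A$, and both kinds are non-edges of $T(\mu_i,2)$ only in the allowed way, so the inclusion $E(J_i)\subseteq E(T(\mu_i,m_i))$ holds. Finally, $v^*$ is disjoint from all $A$-points of value at most $1$. The case $m_i=1$ is the same construction without the value-$1$ classes.

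\textbf{Threshold version.} If all $m_i=1$, the intervals take only the shapes "long interval" and "point", with $v^*$ being one further interval. I would verify directly that $J_i$ is a threshold graph by exhibiting weights, or by checking via \cref{theorem:forbidden_subgraphs_threshold} that no $C_4$, $P_4$ or $2K_2$ occurs. Here $v^*$ is adjacent exactly to the non-$A$ points and to the long intervals, so $J_i$ is a split graph whose neighbourhoods are nested. This gives $\thdim(G)\le n-1$.

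\textbf{Main obstacle.} The delicate step is the placement of $v^*$. In $G$ it must be adjacent to all non-$A$ vertices, whose $\mu_i$-values range over $\{0,1,2\}$, yet in $J_{i(w)}$ it must avoid every $A$-point $w$. The ordering of the point clusters above is chosen precisely so that one interval can do both. It is also exactly why the hypothesis allows at most one such $v^*$: two of them would both have to be long intervals hitting the value-$0$ point $1$ and the value-$1$ interval $[4,6]$, forcing them to intersect.
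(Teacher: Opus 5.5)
There is a genuine gap in the step that deletes the non-edges inside $A$. You write that for distinct $v,w\in A\setminus\{v^*\}$ with $i(v)\ne i(w)$, ``both $v$ and $w$ are then distinct points in $J_{i(w)}$''. Only $w$ is guaranteed to be a point there. The vertex $v$ may satisfy $\mu_{i(w)}(v)=m_{i(w)}$, and in your construction every full-value $A$-vertex other than $v^*$ receives $[-2,10]$, which meets every point. If symmetrically $\mu_{i(v)}(w)=m_{i(v)}$, then no $J_i$ deletes $\{v,w\}$.

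Concretely, take $G=D(\bm{1}_3)$ with $\mu_i(\bm{x})=x_i$. The hypothesis holds with $v^*=(1,1,0)$. The non-edge $\{(1,0,0),(0,1,0)\}$ survives in both $J_1$ and $J_2$, so $\bigcap_i E(J_i)\ne E(G)$. Your properties 1--3 are too weak. What is needed is that in $J_i$ every $A$-vertex with $\mu_i<m_i$ is separated from \emph{every} other $A$-vertex, including those with $\mu_i=m_i$.

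The repair is to stop treating $v^*$ specially. Give \emph{all} $A$-vertices with $\mu_i=m_i$ the $v^*$-type interval $[0.5,4.5]$, which meets every vertex outside $A$ but misses the low $A$-points. Now take distinct $v,w\in A$. The hypothesis says they cannot both be full in every coordinate of $[n-1]$, so one of them, say $w$, has $\mu_i(w)<m_i$ for some $i$. Then $J_i$ deletes $\{v,w\}$ regardless of $\mu_i(v)$. This is where the uniqueness assumption enters: two fully-valued vertices of $A$ would share an interval in every $J_i$. The repaired construction is exactly the paper's (the class $\mu_i=m_i$, $\mu_n=0$ gets $[2,4]$, respectively weight $\tfrac23$).

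There is also a smaller slip. Placing all value-$0$ vertices outside $A$ on the single point $1$ makes them pairwise adjacent in $J_i$, contradicting $E(J_i)\subseteq E(T(\mu_i,m_i))$. Two such vertices with equal $\mu$-vectors would then never be separated. They need pairwise distinct points, e.g.\ in $(0.5,4)$. With both corrections, your threshold argument for the case where all $m_i=1$ also goes through and reproduces the paper's weights $1,\tfrac23,\tfrac13,\tfrac16$.
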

	\begin{proof}
		We construct interval graphs $(H_i)_{i=1}^{n-1}$ with vertex set $V$ as follows:
		
		If $m_i=1$, then $H_i$ will, in fact, be a threshold graph; namely, we let $H_i\coloneqq T(\rho_i,1)$, where $\rho_i\colon V\rightarrow [0,1]$ is given by
		\[\rho_i(v)=\begin{cases}
			1 & \mu_i(v)=\mu_n(v)=1\\
			\frac{2}{3} & \mu_i(v)=1, \mu_n(v)=0\\
			\frac{1}{3} & \mu_i(v)=0, \mu_n(v)=1\\
			\frac{1}{6} & \mu_i(v)=\mu_n(v)=0
		\end{cases}.\]
		If $m_i=2$, then we define $H_i$ as follows:
		\begin{itemize}
			\item We assign all $v\in V$ with $\mu_i(v)=\mu_n(v)=0$ to pairwise disjoint intervals contained within the interval $(0,1)$.
			\item We assign all $v\in V$ with $\mu_i(v)=1$ and $\mu_n(v)=0$ to pairwise disjoint intervals contained within the interval $(1,2)$.
			\item We assign all $v\in V$ with $\mu_i(v)=2$ and $\mu_n(v)=0$ to the interval $[2,4]$.
			\item We assign all $v\in V$ with $\mu_i(v)=0$ and $\mu_n(v)=1$ to pairwise disjoint intervals contained within the interval $(3,4)$.
			\item We assign all $v\in V$ with $\mu_i(v)=1$ and $\mu_n(v)=1$ to the interval $[1,3]$.
			\item We assign all $v\in V$ with $\mu_i(v)=2$ and $\mu_n(v)=1$ to the interval $[0,4]$.
		\end{itemize}
		See \cref{fig:pid_1_2_box_n_1} for an illustration.
		\begin{figure}
			\centering
			\begin{tikzpicture}[decoration=brace, xscale = 0.8]
				\foreach \i in {1,2,4}
				{
					\draw[thick, |-|] (\i-0.5,2)--(\i,2);
					\draw[thick, |-|] (\i+4,2)--(\i+4.5,2);
					\draw[thick, |-|] (\i+13,0.5)--(\i+13.5,0.5);
				}
				\node at (2.75,2){$\dots$};
				\draw[thick, decorate] (0.3,2.2) to node[above, midway] {$\mu_i(v)=0$, $\mu_n(v)=0$} (4.2,2.2);
				\node at (7.25,2){$\dots$};
				\draw[thick, decorate] (4.8,2.2) to node[above, midway] {$\mu_i(v)=1$, $\mu_n(v)=0$} (8.7,2.2);
				\node at (16.25,0.5){$\dots$};
				\draw[thick, decorate] (13.8,0.7) to node[above, midway] {$\mu_i(v)=0$, $\mu_n(v)=1$} (17.7,0.7);
				\draw[thick, |-|] (9,2)to node[midway, above]{$\mu_i(v)=2$, $\mu_n(v)=0$}(18,2);
				\draw[thick, |-|] (4.5,0.5)to node[midway, above]{$\mu_i(v)=1$, $\mu_n(v)=1$}(13.5,0.5);
				\draw[thick, |-|] (0,-1)to node[midway, above]{$\mu_i(v)=2$, $\mu_n(v)=1$}(18,-1);
			\end{tikzpicture}
			\caption{Illustration of the construction of $H_i$ for $m_i=2$ in the proof of \cref{lemma:pid_1_2_box_n_1}.}\label{fig:pid_1_2_box_n_1}
		\end{figure}
		We begin by showing that $H_i$ forms a supergraph of $G$ for every $i\in[n-1]$.
		\begin{claim}
			For every $i\in [n-1]$ with $m_i=1$, $H_i$ constitutes a supergraph of $G$.
		\end{claim}
		\begin{proof}[Proof of claim]
			Let $v,w\in V$ with $\{v,w\}\in E$. Then $\mu_i(v)=1$ or $\mu_i(w)=1$ and we may assume w.l.o.g.\ that $\mu_i(v)=1$. If further $\mu_n(v)=1$, then $\rho_i(v)=1$ and clearly $\{v,w\}\in E(H_i)$. Otherwise, we must have $\mu_n(w)=1$, so $\rho_i(v)+\rho_i(w)\ge \frac{2}{3}+\frac{1}{3}=1$. Again, $\{v,w\}\in E(H_i)$. 
		\end{proof}
		\begin{claim}
			For every $i\in [n-1]$ with $m_i=2$, $H_i$ constitutes a supergraph of $G$.
		\end{claim}
		\begin{proof}[Proof of claim]
			Let $v,w\in V$ with $\{v,w\}\in E$. Then $\mu_n(v)=1$ or $\mu_n(w)=1$ and we may assume w.l.o.g.\ that $\mu_n(v)=1$. If further $\mu_i(v)=2$, then the interval assigned to $v$ overlaps with all other intervals, so $\{v,w\}\in E(H_i)$. If $\mu_i(v)=1$, then we must have $\mu_i(w)\ge 1$. Moreover, the interval $[2,3]$ assigned to $v$ overlaps with all intervals assigned to elements $x$ with $\mu_i(x)\ge 1$, implying again $\{v,w\}\in E(H_i)$. Finally, if $\mu_i(v)=0$, we must have $\mu_i(w)=2$.  But then, the interval assigned to $w$ contains the interval $[3,4]$ and, in particular, the interval assigned to $v$. Once more, we can conclude that $\{v,w\}\in E(H_i)$.
		\end{proof}
		In order to show that the intersection of the graphs $(H_i)_{i=1}^{n-1}$ equals $G$, it remains to prove that for $v,w\in V$ with $\{v,w\}\notin E$, there exists $i\in [n-1]$ such that $\{v,w\}\notin E(H_i)$.
		
		First, suppose that there exists $i\in [n-1]$ such that $\mu_i(v)+\mu_i(w)< m_i$. In the case where $m_i=1$, this implies that $\mu_i(v)=\mu_i(w)=0$. But in this case, $\rho_i(v)+\rho_i(w)\le \frac{2}{3}< 1$, so $\{v,w\}\notin E(H_i)$.
		If $m_i=2$, then $\mu_i(r)=\mu_i(s)=0$, or $\{\mu_i(r),\mu_i(s)\}=\{0,1\}$. By observing that all intervals corresponding to vertices $x\in V$ with $\mu_i(x)=0$ only intersect intervals corresponding to vertices $y\in V$ with $\mu_i(y)=2$, we can conclude $\{v,w\}\notin E(H_i)$.
		
		Hence, we may assume in the following that $\mu_i(v)+\mu_i(w)\ge m_i$ for all $i\in [n-1]$. Moreover, as $\{v,w\}\notin E$, we must have $\mu_n(v)=\mu_n(w)=0$. From this we can derive that there must be $i\in [n-1]$ such that $\mu_i(v)<m_i$ or $\mu_i(w)<m_i$: Otherwise, we get $\mu_i(v)=\mu_i(w)=m_i$ for $i\in [n-1]$ and $\mu_n(v)=\mu_n(w)=0$, implying $v=w$ by the assumption of the lemma.
		
		So let $i\in [n-1]$ such that, w.l.o.g.\, $\mu_i(w)< m_i$. If $m_i=1$, then we get that $\mu_i(w)=0$, so $\rho_i(v)+\rho_i(w)\le \frac{2}{3}+\frac{1}{6}<1$, so $\{v,w\}\notin E(H_i)$. Finally, if $m_i=2$, then observing that an  interval corresponding to a vertex $x\in V$ with $\mu_i(x)\le 1$ and $\mu_n(x)=0$ do not intersect any interval corresponding to a vertex $y\in V$ with $\mu_n(y)=0$ yields $\{v,w\}\notin E(H_i)$.
	\end{proof}
	
	We remark that the graphs that we construct in the proof of \cref{lemma:box_th_1_2} were already used in \cite{2025_Chandran_ZeroDivisor} (see Lemma 2.5) to prove an upper bound on the boxicity of $\Gamma(\mathbb{Z}/N\mathbb{Z})$ in a setting that is almost identical to the one of \cref{lemma:box_th_1_2}.
	\begin{corollary}
		Let $n\ge 3$ and let $\bm{m}\in \{1,2\}^n\setminus \{\bm{2}_n\}$. Then $\boxicity(D(\bm{m}))=n-1$. Moreover, $\thdim(D(\bm{1}_n))=n-1$.
	\end{corollary}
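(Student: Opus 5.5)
The plan is to get the lower bound from \cref{lemma:lower_bound_n_minus_1} and the upper bound from \cref{lemma:box_th_1_2}. Since $n\ge 3$, we have $\bm{m}\ne\bm{1}_2$, so \cref{lemma:lower_bound_n_minus_1} gives $\thdim(D(\bm{m}))\ge\boxicity(D(\bm{m}))\ge n-1$. In particular $\thdim(D(\bm{1}_n))\ge n-1$. It therefore suffices to show $\boxicity(D(\bm{m}))\le n-1$ for all such $\bm{m}$, and $\thdim(D(\bm{1}_n))\le n-1$.

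For the upper bound, write $D(\bm{m})$ as an intersection of threshold graphs matching the hypotheses of \cref{lemma:box_th_1_2}. Because $\bm{m}\ne\bm{2}_n$, some coordinate equals $1$. Since the graph is symmetric under permuting coordinates (together with $\bm{m}$), we may relabel so that $m_n=1$. For every $i\in[n]$ set $\mu_i(\bm{v})\coloneqq v_i\in[m_i]_0$. By \cref{def:integral_covering_graph}, two distinct vertices are adjacent exactly when $\mu_i(\bm v)+\mu_i(\bm w)\ge m_i$ for all $i$. Hence $E(D(\bm m))=\bigcap_{i} E(T(\mu_i,m_i))$, which is the representation the lemma requires, with $m_i\in\{1,2\}$ for $i\in[n-1]$ and $m_n=1$.

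It remains to check the uniqueness hypothesis. A vertex $\bm v$ with $\mu_i(\bm v)=m_i$ for all $i\in[n-1]$ and $\mu_n(\bm v)=0$ must equal $(m_1,\dots,m_{n-1},0)$, so there is at most one such vertex. (It is a genuine vertex, being neither $\bm 0$ nor $\bm m$, but only uniqueness matters.) \cref{lemma:box_th_1_2} then yields $\boxicity(D(\bm m))\le n-1$. When $\bm m=\bm 1_n$, every $m_i$ equals $1$, so the second part of the lemma also gives $\thdim(D(\bm 1_n))\le n-1$.

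No step here is a real obstacle. The only points needing care are the relabeling that places a coordinate equal to $1$ last, which uses that $\bm m\ne\bm 2_n$, and the verification that the uniqueness condition holds.
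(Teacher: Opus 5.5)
Your proof is correct and follows the paper's argument: the lower bound comes from \cref{lemma:lower_bound_n_minus_1}, and the upper bound from \cref{lemma:box_th_1_2}. The paper cites \cref{prop:lower_bound_via_D_n} for the lower bound, which reduces it to the same bound on $D(\bm{1}_n)$. You also make explicit the hypotheses of \cref{lemma:box_th_1_2}, which the paper leaves implicit: you relabel so that $m_n=1$, and you check that $(m_1,\dots,m_{n-1},0)$ is the only candidate vertex.
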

	\begin{proof}
		The lower bound follows by \cref{prop:lower_bound_via_D_n}.
		The upper bound follows from \cref{lemma:box_th_1_2}.
	\end{proof}
	
	\begin{lemma}\label{lemma:box_D_3_1}
		Let $n\ge 2$ and let $\bm{m}\in \mathbb{N}^n$ such that $m_j=3$ for some $j\in [n]$ and $m_i = 1$ for all $i\in[n]\setminus\{j\}$. Then $\boxicity(D(\bm{m}))= n -1$.
	\end{lemma}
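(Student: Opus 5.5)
The lower bound is already available. Since $m_j=3$, we have $\bm{m}\ne\bm{1}_2$, so \cref{lemma:lower_bound_n_minus_1} gives $\boxicity(D(\bm{m}))\ge n-1$. The work is the upper bound $\boxicity(D(\bm{m}))\le n-1$. I would get it by adapting the construction in the proof of \cref{lemma:box_th_1_2}, which cannot be cited directly because $m_j=3\notin\{1,2\}$.

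\textbf{Setup.} Since $n\ge 2$, pick an index $k\ne j$ (so $m_k=1$) and relabel so that $k=n$. Write $\mu_i(\bm{v})=v_i$, so that $D(\bm{m})$ is the intersection of the threshold graphs $T(\mu_i,m_i)$.
\begin{enumerate}
\item For every $i\in[n-1]\setminus\{j\}$ (all with $m_i=1$), take $H_i\coloneqq T(\rho_i,1)$ with the same weights $\rho_i$ as in \cref{lemma:box_th_1_2}.
\item For coordinate $j$, build one interval graph $H_j$. Its intervals depend only on the class $(\mu_j(\bm{v}),\mu_n(\bm{v}))\in[3]_0\times\{0,1\}$, mimicking the $m_i=2$ layout of \cref{fig:pid_1_2_box_n_1}.
\end{enumerate}

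\textbf{Requirements on $H_j$.} The correctness argument of \cref{lemma:box_th_1_2} goes through verbatim provided $H_j$ satisfies three conditions.
\begin{enumerate}
\item $H_j$ is a supergraph of $D(\bm{m})$. Concretely, if $\mu_n(\bm{v})=1$ then $\bm{v}$ is joined to every $\bm{w}$ with $\mu_j(\bm{v})+\mu_j(\bm{w})\ge 3$.
\item $H_j$ deletes every pair with $\mu_j(\bm{v})+\mu_j(\bm{w})\le 2$.
\item $H_j$ deletes every pair with $\mu_n(\bm{v})=\mu_n(\bm{w})=0$ in which at least one of $\mu_j(\bm{v}),\mu_j(\bm{w})$ is below $3$.
\end{enumerate}
The uniqueness hypothesis holds exactly as before: $\bm{m}-\bm{e}_n$ is the only vertex with $\mu_i=m_i$ for all $i<n$ and $\mu_n=0$. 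Given that, the final case analysis of \cref{lemma:box_th_1_2} reduces every remaining non-edge to conditions 2 and 3, or to the $\rho_i$.

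\textbf{Tentative layout.} Classes with $\mu_n=0$ and $\mu_j\le 2$ get pairwise disjoint small intervals in consecutive regions. The class $(3,0)$ gets one long interval. The classes with $\mu_n=1$ get nested intervals whose reach grows with $\mu_j$:
\begin{itemize}
\item $(0,1)$ meets only $\mu_j=3$;
\item $(1,1)$ meets $\mu_j\ge 2$;
\item $(2,1)$ meets $\mu_j\ge 1$;
\item $(3,1)$ meets everything.
\end{itemize}

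\textbf{Main obstacle.} The hard step is realising this layout as an interval representation. The difficulty is that classes such as $(1,1)$ contain many vertices that must be pairwise non-adjacent in $H_j$ (their $j$-sum is $2$), yet each must meet every vertex of class $(2,0)$.

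To get around this, I would use the slack provided by the other coordinates. Two distinct vertices in the same $(\mu_j,\mu_n)$-class differ in some coordinate $i\notin\{j,n\}$. If moreover $\mu_n=0$, then one of them has $\mu_i=0$, and $H_i$ already deletes the pair. So each class $(\mu_j,0)$ may share a single interval in $H_j$. This collapses the problem to a bounded configuration on at most eight ``class intervals'' (with the $\mu_n=1$ classes still possibly needing disjoint copies). I would then check that configuration by hand, treating $n=2$, where every class is a single vertex, separately.

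If the collapsed configuration still forces a contradiction, my fallback is to route the pairs with $\mu_n=0$, or the pairs from classes $(1,1)$ and $(2,1)$, through a modified $\rho_i$ for some $i\ne j,n$. This is possible only when $n\ge 3$. For $n=2$ the graph is small enough to give explicitly.
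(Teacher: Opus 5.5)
\textbf{There is a genuine gap: the upper bound, which is the only nontrivial part, is never established, and the construction you propose cannot exist for $n\ge 3$.}

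Your lower bound is fine. Reducing the upper bound to an interval graph $H_j$ with requirements 1--3, alongside the unchanged $T(\rho_i,1)$, is also fine as a reduction. The problem is that for $n\ge 3$ these requirements contradict each other. Take four vertices:
\begin{itemize}
\item $\bm v$ with $v_j=1$ and all other coordinates $1$;
\item $\bm w$ with $w_j=w_n=1$ and all other coordinates $0$;
\item $\bm x$ with $x_j=2$, $x_n=0$ and all other coordinates $1$;
\item $\bm y$ with $y_j=3$, $y_n=0$ and all other coordinates $1$.
\end{itemize}
The pairs $\bm v\bm x,\bm v\bm y,\bm w\bm x,\bm w\bm y$ are edges of $D(\bm m)$. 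The pair $\bm v\bm w$ is not, since its $j$-sum is $2$. The pair $\bm x\bm y$ is not, since both have $\mu_n=0$. So these four vertices induce a $C_4$.

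Requirement 2 forces $H_j$ to delete $\bm v\bm w$, requirement 3 forces it to delete $\bm x\bm y$, and requirement 1 keeps the four cross edges. Then $H_j$ contains an induced $C_4$. This is not an artefact of how you phrased the requirements. We have $\rho_i(\bm v)=1$ and $\rho_i(\bm x)=\rho_i(\bm y)=\tfrac23$, so no $T(\rho_i,1)$ deletes either non-edge. With the $\rho_i$ unchanged, \emph{any} $H_j$ would have to delete both, contradicting part (a) of \cref{prop:C_4_and_d_partite_graph}.

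Collapsing the classes $(\mu_j,0)$ does not help. $\bm x$ and $\bm y$ lie in the distinct classes $(2,0)$ and $(3,0)$, and $\bm v,\bm w$ lie in class $(1,1)$, which requirement 2 forbids you to collapse. So the by-hand check you defer must fail. (For $n=2$ each class is a single vertex and there is no problem; the issue is $n\ge 3$.) Everything therefore rests on your fallback, which is left unspecified.

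The missing idea is this. Some graph other than $H_j$ must delete $\bm v\bm w$ or $\bm x\bm y$, and to do so it must separate vertices that differ only in coordinate $j$. Indeed, $\bm x,\bm y$ differ only there, and $\bm v$ differs only in coordinate $j$ from $\bm v+\bm e_j$, which is adjacent to $\bm w$.

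The paper builds exactly this, in two changes.
\begin{itemize}
\item In $H_j$ it places the whole class $(v_j,v_n)=(1,1)$ on the single interval $[4,5.5]$. It also uses $(2,1)\mapsto[2,5.5]$, $(3,0)\mapsto[5.5,7]$, $(3,1)\mapsto[0,7]$, and pairwise disjoint small intervals in $(0,1),(2,3),(4,5),(6,7)$ for the classes $(0,0),(1,0),(2,0),(0,1)$. Thus $H_j$ still deletes $\bm x\bm y$ but no longer separates pairs inside class $(1,1)$.
\item It replaces each $T(\rho_i,1)$ by an interval graph that also splits by $v_j\le 1$ versus $v_j\ge 2$. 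For example, $(v_i,v_n)=(1,1)$ goes to $[2,5]$ if $v_j\le 1$ and to $[0,7]$ if $v_j\ge 2$, while $(0,1)$ with $v_j\le1$ goes into $(6,7)$. So two class-$(1,1)$ vertices differing in coordinate $i$ are separated in $H_i$.
\end{itemize}

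These new $H_i$ are not threshold graphs, and at least for $n=3$ this is unavoidable. If your ``modified $\rho_i$'' means a reweighted threshold graph, it cannot work: $D(3,1,1)$ is not the intersection of an interval graph $H$ and a threshold graph $T$. The argument runs as follows.
\begin{enumerate}
\item One of $H,T$ deletes $\{(2,0,1),(3,0,1)\}$ and the other deletes $\{(2,1,0),(3,1,0)\}$, since together they form an induced $C_4$. By the symmetry swapping the last two coordinates, assume $H$ deletes the first.
\item The induced $C_4$'s pairing $\{(2,0,1),(3,0,1)\}$ with $\{(2,1,0),(3,1,0)\}$ and with $\{(1,1,0),(1,1,1)\}$ then force $T$ to delete $\{(2,1,0),(3,1,0)\}$ and $\{(1,1,0),(1,1,1)\}$.
\item The $C_4/P_4$ obstruction from the proof of \cref{theorem:partial_join} then forbids $T$ from deleting $\{(1,1,1),(0,1,1)\}$ or $\{(3,0,0),(3,0,1)\}$. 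Each has exactly one cross non-edge with the respective pair above.
\item Hence $H$ must delete both non-edges of the induced $C_4$ on $\{(1,1,1),(0,1,1),(3,0,0),(3,0,1)\}$, which is impossible for an interval graph.
\end{enumerate}
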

	\begin{proof}
		We may assume without loss of generality that $m_1=1$. Let $D(\bm{m})\eqqcolon (V,E)$.
		We construct interval graphs $(H_i)_{i=1}^{n-1}$ with vertex set $V$ whose intersection is $D(\bm{m})$.
		$H_1$ is defined as follows:
		\begin{itemize}
			\item We assign all $\bm{v}\in V$ with $v_1=v_n=0$ to pairwise disjoint intervals contained within $(0,1)$.
			\item We assign all $\bm{v}\in V$ with $v_1=0$ and $v_n=1$ to pairwise disjoint intervals contained within $(6,7)$.
			\item We assign all $\bm{v}\in V$ with $v_1= 1$ and $v_n=0$ to pairwise disjoint intervals contained within $(2,3)$.
			\item We assign all $\bm{v}\in V$ with $v_1=v_n=1$ to the interval $[4,5.5]$.
			\item We assign all $\bm{v}\in V$ with $v_1=2$ and $v_n=0$ to pairwise disjoint intervals contained within $(4,5)$. 
			\item We assign all $\bm{v}\in V$ with $v_1=2$ and $v_n=1$ to the interval $[2,5.5]$.
			\item We assign all $\bm{v}\in V$ with $v_1=3$ and $v_n=0$ to the interval $[5.5,7]$.
			\item We assign all $\bm{v}\in V$ with $v_1=3$ and $v_n=1$ to the interval $[0,7]$.
		\end{itemize}
		See \cref{fig:intervals_1_D_3_1} for an illustration.
		\begin{figure}
			\centering
			\begin{tikzpicture}[decoration=brace, xscale = 0.8]
				\foreach \i in {1,2,4}
				{
					\draw[thick, |-|] (\i-0.5,3.5)--(\i,3.5);
					\draw[thick, |-|] (\i+4,3.5)--(\i+4.5,3.5);
				}
				\node at (2.75,3.5){$\dots$};
				\draw[thick, decorate] (0.3,3.7) to node[above, midway] {$v_1=0,\ v_n=0$} (4.2,3.7);
				\node at (7.25,3.5){$\dots$};
				\draw[thick, decorate] (4.8,3.7) to node[above, midway] {$v_1=1,\ v_n=0$} (8.7,3.7);
				\foreach \i in {1,2,4}
				{
					\draw[thick, |-|] (\i+8.5,3.5)--(\i+9,3.5);
				}
				\node at (12.25,3.5){$\dots$};
				\draw[thick, decorate] (9.3,3.7) to node[above, midway] {$v_1=2,\ v_n=0$} (13.2,3.7);
				\draw[thick, |-|] (13.5,3.5)to node[midway, above]{$v_1=3,\ v_n=0$}(18,3.5);
				\foreach \i in {1,2,4}
				{
					\draw[thick, |-|] (\i+13,2)--(\i+13.5,2);
				}
				\node at (16.25,2){$\dots$};
				\draw[thick, decorate] (13.8,2.2) to node[above, midway] {$v_1=0,\ v_n=1$} (17.7,2.2);
				\draw[thick, |-|] (9.5,2)to node[midway, above]{$v_1=1,\ v_n=1$}(13.5,2);
				\draw[thick, |-|] (4.5,0.5)to node[midway, above]{$v_1=2,\ v_n=1$}(13.5,0.5);
				\draw[thick, |-|] (0,-1)to node[midway, above]{$v_1=3,\ v_n=1$}(18,-1);
			\end{tikzpicture}
			\caption{Illustration of the construction of $H_1$ in the proof of \cref{lemma:box_D_3_1}.}\label{fig:intervals_1_D_3_1}
		\end{figure}
		
		\begin{claim}
			The interval graph $H_1$ constitutes a supergraph of $D(\bm{m})$.
		\end{claim}
		\begin{proof}[Proof of claim]
			Let $\bm{v},\bm{w}\in V$ with $\{\bm{v},\bm{w}\}\in E$. Then $v_n=1$ or $w_n=1$ and we may assume w.l.o.g.\ that $v_n=1$.
			If $v_1=3$, the interval assigned to $v$ is $[0,7]$, which overlaps with every other interval, so $\{\bm{v},\bm{w}\}\in E(H_1)$.
			If $v_1=2$, then $w_1\geq 1$.
			The interval $[2,5.5]$ assigned to $\bm{v}$ contains all intervals assigned to vertices $\bm{x}$ with $x_1\in\{1,2\}$, shares the point $5.5$ with those with $x_n=0$ and $x_1=3$, and it is contained in the interval $[0,7]$ assigned to those with $x_n=1$ and $x_1=3$.
			Thus, $\{\bm{v},\bm{w}\}\in E(H_1)$.
			If $v_1=1$, then $w_1\geq 2$.
			The interval $[4,5.5]$ assigned to $\bm{v}$ is contained within all intervals assigned to vertices $\bm{x}$ with $x_1\in\{2,3\}$  and $x_n = 1$, contains all intervals assigned to vertices $\bm{x}$ with $x_1=2$ and $x_n=0$, and shares the point $5.5$ with those with $x_1=3$ and $x_n=0$.
			Thus, $\{\bm{v},\bm{w}\}\in E(H_1)$.
			Finally, if $v_1=0$, we must have $w_1=3$.
			Then the interval assigned to $\bm{v}$ is a contained in $(6,7)$ and the interval assigned to $\bm{w}$ is either $[0,7]$ or $[5.5,7]$.
			Thus, $\{\bm{v},\bm{w}\}\in E(H_1)$.
		\end{proof}
		
		For $i\in [n-1] \setminus \{1\}$, we define $H_i$ as follows:
		\begin{itemize}
			\item We assign all $\bm{v}\in V$ with $v_i=v_n=0$ to pairwise disjoint intervals contained within $(0,1)$ if $v_1\leq 1$, and within $(2,3)$ if $v_1\geq 2$.
			\item We assign all $\bm{v}\in V$ with $v_i=0$ and $v_n=1$ to pairwise disjoint intervals contained within $(4,5)$ if $v_1\geq 2$, and within $(6,7)$ if $v_1\leq 1$.
			\item We assign all $\bm{v}\in V$ with $v_i=1$ and $v_n=0$ to the interval $[4,7]$.
			\item We assign all $\bm{v}\in V$ with $v_i=1$, $v_n=1$, and $v_1 \leq 1$ to the interval $[2,5]$.
			\item We assign all $\bm{v}\in V$ with $v_i=1$, $v_n=1$, and $v_1 \geq 2$ to the interval $[0,7]$.
		\end{itemize}
		See \cref{fig:intervals_2_n-1_D_3_1} for an illustration.
		\begin{figure}
			\centering
			\begin{tikzpicture}[decoration=brace, xscale = 0.8]
				\foreach \i in {1,2,4}
				{
					\draw[thick, |-|] (\i-0.5,3.5)--(\i,3.5);
					\draw[thick, |-|] (\i+4,3.5)--(\i+4.5,3.5);
					\draw[thick, |-|] (\i+8.5,3.5)--(\i+9,3.5);
					\draw[thick, |-|] (\i+13,3.5)--(\i+13.5,3.5);
				}
				\node at (2.75,3.5){$\dots$};
				\draw[thick, decorate] (0.3,3.7) to node[above, midway] {\shortstack{$v_1\leq 1$\\  $v_i=0,\ v_n=0$}} (4.2,3.7);
				\node at (7.25,3.5){$\dots$};
				\draw[thick, decorate] (4.8,3.7) to node[above, midway] {\shortstack{ $v_1\geq 2$\\ $v_i=0,\ v_n=0$}} (8.7,3.7);
				\node at (11.75,3.5){$\dots$};
				\draw[thick, decorate] (9.3,3.7) to node[above, midway] {\shortstack{$v_1\geq 2$\\ $v_i=0,\ v_n=1$}} (13.2,3.7);
				\node at (16.25,3.5){$\dots$};
				\draw[thick, decorate] (13.8,3.7) to node[above, midway] {\shortstack{$v_1\leq 1$\\ $v_i=0,\ v_n=1$}} (17.7,3.7);
				\draw[thick, |-|] (9,2)to node[midway, above]{$v_i=1$, $v_n=0$}(18,2);
				\draw[thick, |-|] (4.5,0.5)to node[midway, above]{$v_i=1$, $v_n=1, v_1\leq 1$}(13.5,0.5);
				\draw[thick, |-|] (0,-1)to node[midway, above]{$v_i=1,\ v_n=1$, $v_1\geq 2$}(18,-1);
			\end{tikzpicture}
			\caption{Illustration of the construction of $H_i$ for $i \in [n-1] \setminus \{1\}$ in the proof of \cref{lemma:box_D_3_1}.}\label{fig:intervals_2_n-1_D_3_1}
		\end{figure}
		\begin{claim}
			For every $i\in[n-1]\setminus\{1\}$, the interval graph $H_i$ constitutes a supergraph of $D(\bm{m})$.
		\end{claim}
		\begin{proof}[Proof of claim]
			Let $\bm{v},\bm{w}\in V$ with $\{\bm{v},\bm{w}\}\in E$. Then $v_n=1$ or $w_n=1$ and we may assume w.l.o.g.\ that $v_n=1$.
			If $v_i=1$ and $v_1\geq 2$, the interval assigned to $\bm{v}$ is $[0,7]$, 
			which contains every other interval, so $\{\bm{v},\bm{w}\}\in E(H_i)$.
			If $v_i=1$ and $v_1\leq 1$, then $w_1\geq 2$.
			The interval assigned to $\bm{v}$ is $[2,5]$ which overlaps with all intervals assigned to vertices $\bm{x}$ with $x_1\geq 2$, so $\{\bm{v},\bm{w}\}\in E(H_i)$.
			Finally, if $v_i=0$, then $w_i=1$.
			If, additionally, $v_1 \geq 2$, then the interval assigned to $\bm{v}$ is contained in $(4,5)$ and $\bm{w}$ is assigned to an interval containing $[4,5]$.
			Otherwise, $v_1 \le 1$ and so $w_1 \geq 2$.
			In this case, the interval assigned to $\bm{v}$ is contained in $(6,7)$ and $\bm{w}$ is assigned to an interval containing $[6,7]$.
			Thus, $\{\bm{v},\bm{w}\}\in E(H_i)$.
		\end{proof}
		In order to show that the intersection of the graphs $(H_i)_{i=1}^{n-1}$ equals $D(\bm{m})$, it remains to prove that for $\bm{v},\bm{w}\in V$ with $\{\bm{v},\bm{w}\}\notin E$, there exists $i\in [n-1]$ such that $\{\bm{v},\bm{w}\}\notin E(H_i)$.
		As $\{\bm{v},\bm{w}\}\notin E$, there exists $i\in [n]$ such that $v_i+w_i< m_i$.
		
		\vspace{2mm}
		\noindent
		\textbf{Case 1: $v_1 + w_1 < 3$.}

		\noindent
		We assume w.l.o.g.\ that $v_1 \leq w_1$.
		Then $3 > v_1 + w_1 \geq 2\cdot v_1$, so $v_1\leq 1$.
		If $v_1 = 0$, then $w_1\leq 2$.
		The vertices $\bm{x}$ with $x_1 = 0$ are mapped to pairwise disjoint intervals 
		contained in $(0,1) \cup (6,7)$ in the interval representation of $H_1$.
		These only overlap with intervals in $H_1$ assigned to vertices $\bm{x}$ with $x_1=3$.
		Hence,  $\{\bm{v},\bm{w}\}\notin E(H_1)$.
		If $v_1 = 1$, then $w_1 = 1$ because $v_1\le w_1< 3-v_1$.
		If, in addition, $v_n + w_n \leq 1$, then $\bm{v}$ and $\bm{w}$ are mapped to 
		disjoint intervals in $(2,3)$ if $v_n + w_n = 0$, or to a subset of 
		$(2,3)$ and to $[4,5.5]$ if $v_n + w_n = 1$.
		Either way, $\{\bm{v},\bm{w}\}\notin E(H_1)$.
		
		For the case $v_n = w_n = 1$, we rely on the interval graphs $H_i$ for 
		$i\in [n-1] \setminus \{1\}$ to delete $\{\bm{v},\bm{w}\}\notin E$.
		As $\bm{v} \neq \bm{w}$, there exists an index $j \in [n-1] \setminus \{1\}$ such that 
		$v_j\neq w_j$.
		As $m_j = 1$, we may assume w.l.o.g.\ that $v_j = 0$ and $w_j = 1$.
		Then, in the interval representation of $H_j$, $\bm{v}$ is mapped to a closed subinterval 
		of $(6,7)$ while $\bm{w}$ is mapped to the interval $[2,5]$.
		Thus, $\{\bm{v},\bm{w}\}\notin E(H_j)$.
		
		\vspace{2mm}
		\noindent
		\textbf{Case 2: $v_i = w_i = 0$ for $i \in [n-1] \setminus \{1\}$.}
		
		\noindent
		In the interval representation of $H_i$, the vertices $\bm{x}$ with $x_i = 0$ are mapped to pairwise disjoint intervals.
		Thus, $\{\bm{v},\bm{w}\}\notin E(H_i)$.
		
		\vspace{2mm}
		\noindent
		\textbf{Case 3: $v_n = w_n = 0$.}
		
		\noindent
		As $\bm{v} \neq \bm{w}$, there exists an index $j \in [n-1]$ such that $ v_j\neq w_j$.
		If $j = 1$, then $\{\bm{v},\bm{w}\} \notin E(H_1)$ because in the interval representation of $H_1$, every pair of vertices $\bm{x},\bm{y}$ with $x_n = y_n = 0$ and $x_1 \neq 3$ or $y_1 \neq 3$ are mapped to disjoint intervals.
		If $j \in [n-1] \setminus \{1\}$, we can assume w.l.o.g. that $v_j = 0$ and $w_j = 1$.
		Then, in the interval representation of $H_j$, $\bm{v}$ is mapped to a closed subinterval of $(0,3)$ while $\bm{w}$ is mapped to the interval $[4,7]$.
		Thus, $\{\bm{v},\bm{w}\}\notin E(H_j)$.
	\end{proof}
	\subsection{Lower bounds on boxicity and threshold dimension\label{sec:intergral_covering_lower_bounds}}
	\begin{lemma}\label{lemma:some_mi_at_least_4}
		Let $n\ge 2$ and let $\bm{m}\in\mathbb{N}^n_{\ge 1}$ such that there is $i\in [n]$ with $m_i\ge 4$. Then $\boxicity(D(\bm{m}))= \thdim(D(\bm{m}))= n$.
	\end{lemma}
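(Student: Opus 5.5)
The upper bound $\thdim(D(\bm{m}))\le n$ is \cref{prop:generalized_disjointness_graph_upper_bound}, and $\boxicity\le\thdim$. So it remains to show $\boxicity(D(\bm{m}))\ge n$. The plan is to find an induced subgraph of $D(\bm{m})$ that is a join, or at least something whose boxicity adds up, and then invoke \cref{lemma:disjoint_operation}.

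Assume without loss of generality that $m_1\ge 4$. For each coordinate $i\ge 2$, consider the two vertices that agree with $\bm{m}$ except in coordinate $i$, where they take the values $0$ and $m_i$ replaced appropriately. More concretely, I would take the vertices $\bm{a}^i=\bm{m}-m_i\bm{e}_i$ and $\bm{b}^i$ obtained similarly. The key observation is that two vertices which are "full" (equal to $m_j$) in all but one coordinate are adjacent to everything that is full in the coordinates where they are deficient. Hence gadgets living in different coordinates are joined to each other.

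For coordinate $1$, I would use the vertices $\bm{x}^{(t)}=\bm{m}-(m_1-t)\bm{e}_1$ for $t\in\{1,2\}$ together with suitable partners, and build a non-complete induced subgraph of boxicity $1$ inside "coordinate $1$". This is possible because $m_1\ge 4$ gives $1+2<m_1$, and it mirrors the $D(m)$ case of \cref{lemma:integral_covering_special_cases}. However, with such vertices each gadget only has boxicity about $1$. Moreover, the gadget for coordinate $i$ with $m_i=1$ is just $\bm{m}-\bm{e}_i$, a single vertex. So a pure join would give only $1$, and this approach is too weak on its own.

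I would therefore instead use a counting argument like the one in \cref{sec:lower_bound_n_minus_1}. The restriction to $\prod_i\{0,m_i\}$ gives $D_n$, which already forces $n-1$ dimensions. The extra vertices from coordinate $1$ (values $1,2$ with other coordinates $0$ or full) produce additional non-edges of the form $\{\bm{v},\bm{w}\}$ with $v_1+w_1<m_1$ that no interval graph handling the $D_n$-structure can delete for free.

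Concretely, I would take the induced subgraph on the sets $S\subseteq[n]$, read as $0/m_i$ vectors, plus the two vertices $\bm{u}=(1,\bm{m}_{-1})$ and $\bm{u}'=(2,\bm{m}_{-1})$. These two are non-adjacent to each other. They are adjacent to exactly those $0/m$-vectors with full first coordinate, and they behave like twins of the singleton-complement $[n]\setminus\{1\}$ that are mutually non-adjacent. The argument would go as follows. Suppose $d=n-1$ interval graphs suffice. The tight count $n(n-1)\le (n-1)(n+1)$ leaves only $n-1$ spare deletions, and the case analysis of \cref{lemma:handle_at_most_2} shows which interval graphs have slack. Next, show that deleting $\{\bm{u},\bm{u}'\}$, together with the non-edges of $\bm{u},\bm{u}'$ to the sets containing $1$, forces an interval graph to lose a handled or touched pair. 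This should yield a contradiction.

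The main obstacle is this step. The $(n+1)$-per-graph bound is not tight, so the counting may not close. If it fails, I would fall back to a structural argument: show that in any representation with $n-1$ interval graphs, some graph $H_j$ must delete $\{\bm{u},\bm{u}'\}$ while also containing a non-edge between the singletons' structure, and then derive an induced $2K_2$/$C_4$-type obstruction in the interval order, as in the proof of \cref{lemma:disjoint_pairs}.
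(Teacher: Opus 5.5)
\textbf{There is a genuine gap: the proposal never establishes the lower bound $n$.} It goes wrong where you drop the join approach. You note that among vertices that are full in every coordinate but one, a coordinate with $m_i=1$ contributes only the single vertex $\bm{m}-\bm{e}_i$. That is true, but it only means you restricted the vertex choice too much.

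The missing idea is to let the large coordinate supply the second twin. With $m_1\ge 4$, take the following pairs:
\begin{itemize}
\item for each $i\ge 2$, the vertices $\bm{x}^i=\bm{m}-m_i\bm{e}_i$ and $\bm{y}^i=\bm{m}-m_i\bm{e}_i-\bm{e}_1$;
\item for coordinate $1$, the vertices $\bm{x}^1,\bm{y}^1$ whose first coordinate is $1$ and $2$ respectively, with all other coordinates full.
\end{itemize}
The adjacency checks are:
\begin{itemize}
\item Within a pair $i\ge 2$, coordinate $i$ sums to $0<m_i$. Within pair $1$, coordinate $1$ sums to $3<m_1$; this is the only place $m_1\ge 4$ is used.
\item Between pairs $i\ne j$ with $i,j\ge 2$, coordinate $1$ sums to at least $2(m_1-1)\ge m_1$.
\item Between pair $i\ge 2$ and pair $1$, coordinate $1$ sums to at least $(m_1-1)+1=m_1$.
\item Every other coordinate is covered, because a vertex's deficient coordinate is full in the other vertex.
\end{itemize}
So these $2n$ vertices induce $\bigvee_{i=1}^n\overline{K_2}$. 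Its boxicity is $n$ by \cref{lemma:disjoint_operation}, and \cref{lemma:induced_subgraph} finishes. This is the paper's proof, with the roles of coordinates $1$ and $n$ swapped.

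\textbf{Your fallback argument cannot be repaired, not merely ``may not close'':} the subgraph you chose has boxicity at most $n-1$. Let $\mu(\bm{v})\in\{0,1\}^n$ record which coordinates of $\bm{v}$ are full. On the $0/m$-vectors together with $\bm{u}=(1,\bm{m}_{-1})$ and $\bm{u}'=(2,\bm{m}_{-1})$, adjacency is exactly $\mu(\bm{v})+\mu(\bm{w})\ge\bm{1}$, with $\mu(\bm{u})=\mu(\bm{u}')=(0,1,\dots,1)$.

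In other words, $\bm{u}$, $\bm{u}'$ and $\bm{m}-m_1\bm{e}_1$ are three pairwise non-adjacent twins, and the rest is $D(\bm{1}_n)$. The pattern $(1,\dots,1,0)$ is realized by a unique vertex, namely $\bm{m}-m_n\bm{e}_n$. Hence \cref{lemma:box_th_1_2}, with all $m_i=1$, gives a representation by $n-1$ threshold graphs, and so by $n-1$ interval graphs.

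This is the same phenomenon as in \cref{thm:box_thd_reduced}: making one index non-lonely does not raise the dimension while another index stays lonely. Any contradiction you derive from assuming $n-1$ interval graphs for this subgraph would therefore be false. To force dimension $n$ you need a non-adjacent twin pair attached to every coordinate at once, which is exactly what the join above provides.
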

	\begin{proof}
		Without loss of generality, we may assume that $m_n\ge 4$. By \cref{prop:generalized_disjointness_graph_upper_bound}, it suffices to prove that $\boxicity(D(\bm{m}))\ge n$. 
		To this end, we exhibit the $n$-fold join of $\overline{K_2}$ as an induced subgraph of $D(\bm{m})$.
		For $i\in [n-1]$, let $\bm{x^i}$ and $\bm{y^i}$ be given by
		\[x^i_j=\begin{cases}
			0 & j=i\\
			m_j & j\in [n]\setminus \{i\}
		\end{cases} \text{ and }y^i_j=\begin{cases}
			0 & j=i\\
			m_n-1 & j=n\\
			m_j & j\in [n-1]\setminus \{i\}
		\end{cases}. \]
		Let further $\bm{x^n}$ and $\bm{y^n}$ be given by
		\[x^n_j=\begin{cases}
			1 & j=n\\
			m_j & j\in [n-1]
		\end{cases} \text{ and }y^n_j=\begin{cases}
			2 & j=n\\
			m_j & j\in [n-1]
		\end{cases}. \]
		Note that the vertices $\bm{x^i},\bm{y^i},i\in [n]$ are valid vertices of $D(\bm{m})$ (in the sense that each of them has at least one non-zero coordinate and one coordinate that is less than the respective $m_j$). Moreover, they are pairwise distinct. For each $i\in [n]$, $\bm{x^i}$ and $\bm{y^i}$ are non-adjacent in $D(\bm{m})$. For $i\ne j\in [n]$, each of $\bm{x^i}$ and $\bm{y^i}$ is adjacent to each of $\bm{x^j}$ and $\bm{y^j}$. Hence, $D(\bm{m})[\{\bm{x^i},\bm{y^i}\colon i\in [n]\}]$ is isomorphic to the $n$-fold join of $\overline{K_2}$ as promised.
	\end{proof}
	\begin{lemma}\label{lemma:boxicity_all_at_least_2}
		Let $n\ge 2$ and $\bm{m}\in\mathbb{N}^n_{\ge 2}$. Then $\boxicity(D(\bm{m}))= \thdim(D(\bm{m}))= n$.
	\end{lemma}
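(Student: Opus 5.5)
The plan is to copy the strategy of \cref{lemma:some_mi_at_least_4}. By \cref{prop:generalized_disjointness_graph_upper_bound} we already have $\boxicity(D(\bm{m}))\le\thdim(D(\bm{m}))\le n$. Since $\boxicity\le\thdim$, it suffices to prove $\boxicity(D(\bm{m}))\ge n$. For this I will exhibit the $n$-fold join of $\overline{K_2}$ as an induced subgraph of $D(\bm{m})$. By \cref{lemma:disjoint_operation}, this join has boxicity $n\cdot\boxicity(\overline{K_2})=n$, because two non-adjacent vertices cannot be represented by a single interval each while intersecting. \cref{lemma:induced_subgraph} then gives the bound.

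The construction is as follows. For each $i\in[n]$ define $\bm{x^i}$ and $\bm{y^i}$ by
\[x^i_j=\begin{cases}0 & j=i\\ m_j & j\ne i\end{cases}\qquad\text{and}\qquad y^i_j=\begin{cases}1 & j=i\\ m_j & j\ne i\end{cases}.\]
Here we use $m_i\ge 2$ so that $\bm{x^i}\ne\bm{y^i}$ and $y^i_i=1<m_i$.

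These $2n$ vectors are valid vertices. Each has coordinate $i$ strictly below $m_i$, so none equals $\bm{m}$. Since $n\ge 2$, each has some coordinate $j\ne i$ equal to $m_j\ge 1$, so none equals $\bm{0}$. They are also pairwise distinct: the unique coordinate where $\bm{x^i}$ or $\bm{y^i}$ is below $\bm{m}$ is $i$, and within a pair the values $0$ and $1$ differ.

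Now the adjacencies. Within a pair, coordinate $i$ gives $0+1<m_i$, so $\bm{x^i}$ and $\bm{y^i}$ are non-adjacent. Across pairs $i\ne k$, take any $\bm{a}\in\{\bm{x^i},\bm{y^i}\}$ and $\bm{b}\in\{\bm{x^k},\bm{y^k}\}$, and check each coordinate:
\begin{itemize}
\item at coordinate $i$, $b_i=m_i$, so $a_i+b_i\ge m_i$;
\item at coordinate $k$, symmetrically $a_k=m_k$, so $a_k+b_k\ge m_k$;
\item at every other coordinate $j$, both entries equal $m_j$.
\end{itemize}
Hence $\bm{a}$ and $\bm{b}$ are adjacent, and the induced subgraph on these $2n$ vertices is exactly $\bigvee_{i=1}^n\overline{K_2}$.

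There is no real obstacle. The only points needing care are the validity and distinctness of the vertices, where both $n\ge 2$ and $m_i\ge 2$ are used, and the observation that $\boxicity(\overline{K_2})=1$.
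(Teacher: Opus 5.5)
Your proposal is correct and follows the paper's proof essentially verbatim: the same upper bound from \cref{prop:generalized_disjointness_graph_upper_bound}, and the same vertices $\bm{x^i},\bm{y^i}$ (with $0$, respectively $1$, in coordinate $i$ and $m_j$ elsewhere) inducing the $n$-fold join of $\overline{K_2}$. You are also slightly more explicit than the paper about why this join forces boxicity at least $n$, via \cref{lemma:disjoint_operation,lemma:induced_subgraph}.
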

	\begin{proof}
		By \cref{prop:generalized_disjointness_graph_upper_bound}, it suffices to prove that $\boxicity(D(\bm{m}))\ge n$.
		To this end, we exhibit the $n$-fold join of $\overline{K_2}$ as an induced subgraph of $D$.
		For $i\in [n]$, let $\bm{x^i}$ and $\bm{y^i}$ be given by 
		\[x^i_j=\begin{cases}
			0 & j=i\\
			m_j & j\in [n]\setminus \{i\}
		\end{cases} \text{ and }y^i_j=\begin{cases}
			1 & j=i\\
			m_j & j\in [n]\setminus \{i\}
		\end{cases}. \]
		Note that $n\ge 2$ and $m_i\ge 2$ for $i\in [n]$ ensures that each vertex has at least one coordinate which is non-zero and one coordinate  which is strictly smaller than the corresponding $m_i$; i.e., they are indeed vertices of $D(\bm{m})$.
		The vertices $\bm{x^i},\bm{y^i},i\in [n]$ are pairwise distinct. For each $i\in [n]$, $\bm{x^i}$ and $\bm{y^i}$ are non-adjacent in $D(\bm{m})$ because $x^i_i+y^i_i=1<m_i$. On the other hand, for $i\ne j\in [n]$, each of $\bm{x^i}$ and $\bm{y^i}$ is adjacent to each of $\bm{x^j}$ and $\bm{y^j}$. Hence, $D[\{\bm{x^i},\bm{y^i}\colon i\in [n]\}]$ is isomorphic to the $n$-fold join of $\overline{K_2}$ as promised.
	\end{proof}

	\begin{lemma}\label{lemma:thdim_at_least_one_2}
		Let $n\ge 2$ and let $\bm{m}\in\mathbb{N}^n_{\ge 1}\setminus\{\bm{1}_n\}$. Then $\thdim(D(\bm{m}))=n$.
	\end{lemma}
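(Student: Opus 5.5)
The plan is to match the trivial upper bound $\thdim(D(\bm{m}))\le n$ from \cref{prop:generalized_disjointness_graph_upper_bound} with a lower bound coming from \cref{theorem:partial_join}. Take each $G_i$ to be $\overline{K_2}$, the graph on two non-adjacent vertices. It has threshold dimension $1$, since it is not a clique and is itself a threshold graph.

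It therefore suffices to exhibit an induced subgraph of $D(\bm{m})$ that is a partial join of $n$ copies of $\overline{K_2}$. Threshold dimension is monotone under taking induced subgraphs: restricting threshold graphs $H_1,\dots,H_d$ to a vertex subset again gives threshold graphs, and their intersection is the induced subgraph. So \cref{theorem:partial_join} then yields $\thdim(D(\bm{m}))\ge n$.

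The construction mimics \cref{lemma:some_mi_at_least_4}, but the weaker hypothesis is paid for by allowing a matching of non-edges between blocks. Since $\bm{m}\ne\bm{1}_n$, we may assume $m_n\ge 2$. For $i\in[n-1]$, define
\begin{itemize}
\item $\bm{x^i}$ by $x^i_i=0$ and $x^i_j=m_j$ for $j\neq i$;
\item $\bm{y^i}$ by $y^i_i=0$, $y^i_n=m_n-1$, and $y^i_j=m_j$ otherwise;
\item $\bm{x^n}=(m_1,\dots,m_{n-1},0)$;
\item $\bm{y^n}=(m_1,\dots,m_{n-1},1)$.
\end{itemize}

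First I check that these are valid and pairwise distinct vertices of $D(\bm{m})$. Each $\bm{y^i}$ has a non-zero $n$-th coordinate because $m_n\ge 2$, and $\bm{y^n}\ne\bm{m}$ because $1<m_n$.

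Within each block the pair is non-adjacent. For $i<n$, coordinate $i$ sums to $0<m_i$. For block $n$, coordinate $n$ sums to $1<m_n$.

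Next come the cross pairs between blocks $i\ne j$ with $i,j<n$. Coordinates $i$ and $j$ are covered by the full entries. Every other coordinate $k<n$ is covered with room to spare. Coordinate $n$ sums to at least $2m_n-2\ge m_n$, which again uses $m_n\ge2$. So all four pairs are edges.

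Finally, between block $i<n$ and block $n$, coordinate $i$ is always covered, via $0+m_i$. Coordinate $n$ gives the following sums:
\begin{itemize}
\item $\bm{x^i},\bm{x^n}$: $m_n+0$, an edge;
\item $\bm{x^i},\bm{y^n}$: $m_n+1$, an edge;
\item $\bm{y^i},\bm{y^n}$: $(m_n-1)+1$, an edge;
\item $\bm{y^i},\bm{x^n}$: $m_n-1<m_n$, a non-edge.
\end{itemize}
Hence the set of non-edges between any two blocks is empty or the single pair $\{\bm{y^i},\bm{x^n}\}$, which is a matching. The induced subgraph on these $2n$ vertices is thus a partial join of $n$ copies of $\overline{K_2}$ in the sense of \cref{def:partial_join}, and the result follows.

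The only delicate step is choosing coordinates so that non-adjacency inside block $n$ coexists with at most a matching of non-edges to the other blocks when $m_n$ may be as small as $2$. The choice of $n$-th coordinates $m_n-1$ and $0,1$ above handles this, and the remaining verification is routine coordinate-by-coordinate checking.
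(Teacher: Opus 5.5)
Your proof is correct and follows essentially the same route as the paper. You take the upper bound from \cref{prop:generalized_disjointness_graph_upper_bound} and obtain the lower bound from \cref{theorem:partial_join}, applied to the same induced partial $n$-fold join of $\overline{K_2}$ with the same vertices $\bm{x^i},\bm{y^i}$ for $i\in[n-1]$ and $\bm{x^n}$; the only difference is that you set $y^n_n=1$ where the paper uses $y^n_n=m_n-1$, which is immaterial since the two choices coincide for $m_n=2$ and both leave $\{\bm{y^i},\bm{x^n}\}$ as the only non-edge between blocks.
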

	\begin{proof}
		The upper bound follows from \cref{prop:generalized_disjointness_graph_upper_bound}. For the lower bound, we will show that $D(\bm{m})$ contains a partial $n$-fold join of $\overline{K_2}$ as an induced subgraph. We may assume without loss of generality that $m_n\ge 2$. 
		For $i\in [n-1]$, let $\bm{x^i}$ and $\bm{y^i}$ be given by
		\[x^i_j=\begin{cases}
			0 & j=i\\
			m_j & j\in [n]\setminus \{i\}
		\end{cases} \text{ and }y^i_j=\begin{cases}
			0 & j=i\\
			m_n-1 & j=n\\
			m_j & j\in [n-1]\setminus \{i\}
		\end{cases}. \]
		Let further $\bm{x^n}$ and $\bm{y^n}$ be given by
		\[x^n_j=\begin{cases}
			0 & j=n\\
			m_j & j\in [n-1]
		\end{cases} \text{ and }y^n_j=\begin{cases}
			m_n-1 & j=n\\
			m_j & j\in [n-1]
		\end{cases}. \]
		Note that the vertices $\bm{x^i},\bm{y^i},i\in [n]$ are valid vertices of $D(\bm{m})$ (in the sense that each of them has at least one non-zero coordinate and one coordinate that is less than the respective $m_j$). Moreover, they are pairwise distinct. For each $i\in [n]$, $\bm{x^i}$ and $\bm{y^i}$ are non-adjacent in $D(\bm{m})$. For $i\ne j\in [n-1]$, each of $\bm{x^i}$ and $\bm{y^i}$ is adjacent to each of $\bm{x^j}$ and $\bm{y^j}$. Moreover, for $i\in [n-1]$, $\bm{x^i}$ is adjacent to both $\bm{x^n}$ and $\bm{y^n}$, and $\bm{y^i}$ is adjacent to $\bm{y^n}$. So the only missing edge between the vertex sets $\{\bm{x^i},\bm{y^i}\}$ and $\{\bm{x^n},\bm{y^n}\}$ is $\{\bm{y^i},\bm{x^n}\}$. Hence, $D(\bm{m})[\{\bm{x^i},\bm{y^i}\colon i\in [n]\}]$ is isomorphic to an $n$-fold partial join of $\overline{K_2}$ as promised.
	\end{proof}
	\begin{proposition}\label{prop:C_4_and_d_partite_graph}
		Let $G=(V,E)$ be a graph and let $(H_i)_{i=1}^d$ be interval graphs on the vertex set $V$ with $E=\bigcap_{i=1}^d E(H_i)$.
		\begin{enumerate}[(a)]
			\item \label{item:induced_C_4} Let $a,b,c,d\in V$ be four distinct vertices such that $G[\{a,b,c,d\}]$ forms a complete bipartite graph $K_{2,2}$ with bipartitions $\{a,b\}$ and $\{c,d\}$, i.e., $\{a,b\},\{c,d\}\notin E$, but $\{v,w\}\in E$ for each $v\in\{a,b\}$ and $w\in\{c,d\}$. Then there is no $i\in [d]$ with $\{a,b\},\{c,d\}\notin E(H_i)$.
			\item \label{item:induced_d_partite_graph} Let $(A_i)_{i=1}^d$ be $d$ pairwise stable sets of $G$ such that $|A_i|\ge 2$ for each $i\in [d]$ and for $i\ne j\in [d]$ every vertex in $A_i$ is adjacent to every vertex in $A_j$. Then there exists a permutation $\pi\colon [d]\leftrightarrow [d]$ such that for $a,a'\in A_i$, we have $\{a,a'\}\notin E(H_j)$ if and only if $j=\pi(i)$. 
		\end{enumerate}
	\end{proposition}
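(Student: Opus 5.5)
For part~\ref{item:induced_C_4}, I would argue by contradiction using the interval representation of $H_i$. Suppose $\{a,b\},\{c,d\}\notin E(H_i)$. Then $H_i[\{a,b,c,d\}]$ contains all four edges $\{v,w\}$ with $v\in\{a,b\}$, $w\in\{c,d\}$, because $E\subseteq E(H_i)$. It misses the two edges $\{a,b\}$ and $\{c,d\}$. Hence it is an induced $C_4$. Since interval graphs are chordal, this is impossible.

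I could also show this directly. Let $I_a,I_b$ be disjoint, with $I_a$ to the left of $I_b$. Then $I_c$ and $I_d$ both meet $I_a$ and $I_b$, so both contain the gap between $I_a$ and $I_b$. Hence $I_c\cap I_d\ne\emptyset$, a contradiction. This is the same reasoning as in the proof of \cref{lemma:disjoint_pairs}.

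For part~\ref{item:induced_d_partite_graph}, I would define, for each $i\in[d]$, the set $J_i\coloneqq\{j\in[d]\colon \exists\, a\ne a'\in A_i \text{ with } \{a,a'\}\notin E(H_j)\}$. The proof has three steps.

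First, each $J_i$ is non-empty. Indeed, $A_i$ is stable in $G$ and $|A_i|\ge 2$, so some non-edge inside $A_i$ must be deleted by some $H_j$.

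Second, the sets $J_i$ are pairwise disjoint. Take $i\ne i'$ and $j\in J_i\cap J_{i'}$, witnessed by $\{a,a'\}\subseteq A_i$ and $\{c,c'\}\subseteq A_{i'}$. These four vertices induce a $K_{2,2}$ in $G$ with bipartition $\{a,a'\}$, $\{c,c'\}$. So part~\ref{item:induced_C_4} is contradicted.

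Third, we have $d$ pairwise disjoint non-empty subsets of $[d]$. Therefore each $J_i$ is a singleton $\{\pi(i)\}$, and $\pi$ is a permutation.

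It remains to upgrade "some pair in $A_i$ is deleted by $H_{\pi(i)}$" to "every pair $a,a'\in A_i$ is deleted by $H_j$ if and only if $j=\pi(i)$". Fix distinct $a,a'\in A_i$. Since $\{a,a'\}\notin E$, some $H_j$ deletes it. By definition, such a $j$ lies in $J_i=\{\pi(i)\}$, so $H_{\pi(i)}$ deletes $\{a,a'\}$. Conversely, if $H_j$ deletes $\{a,a'\}$, then $j\in J_i$, so $j=\pi(i)$. This gives both directions.

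The only subtle point is making the witness pairs in the disjointness step distinct vertices forming a genuine induced $K_{2,2}$. This holds because the sets $A_i$ and $A_{i'}$ are different stable sets that are completely joined. In particular they are disjoint: a common vertex would have to be adjacent to itself.
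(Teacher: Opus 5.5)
Your proof is correct and follows essentially the same argument as the paper. In (a), $H_i$ would contain an induced $C_4\cong K_{2,2}$. In (b), (a) forces the deleting indices for different $A_i$ to be distinct, and pigeonholing over $[d]$ finishes the proof. The only cosmetic difference is that you work with the full sets $J_i$ of deleting indices, whereas the paper first fixes a representative pair $x_i,y_i$ in each $A_i$; your version makes the final ``every pair in $A_i$'' step slightly more direct.
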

	\begin{proof}
		\eqref{item:induced_C_4}: Suppose we had $\{a,b\},\{c,d\}\notin E(H_i)$ for some $i\in[d]$. As $E(G)\subseteq E(H_i)$, this implies that $H_i[\{a,b,c,d\}]\cong K_{2,2}\cong \overline{K_2}\vee \overline{K_2}$, contradicting the fact that $H_i$ is an interval graph.
		
		\eqref{item:induced_d_partite_graph}: For $i\in [d]$, let $x_i,y_i\in A_i$ be two distinct vertices. As $\{x_i,y_i\}\notin E(G)$, there is $\pi(i)\in [d]$ with $\{x_i,y_i\}\notin E(H_i)$. By \eqref{item:induced_C_4}, the indices $(\pi(i))_{i\in [d]}$ are pairwise distinct, i.e., $\pi$ is a permutation.
		Now, let $i\in [d]$ and let $a,a'\in A_i$ be two distinct vertices. Then $\{a,a'\}\notin E(G)$, so there is $k\in [d]$ with $\{a,a'\}\notin E(H_k)$. By \eqref{item:induced_C_4} and because $G[a,a',x_j,y_j]\cong K_{2,2}$ for every $j\ne i$, $k=\pi(i)$ is the only feasible option.
	\end{proof}
	\begin{lemma}\label{lemma:boxicity_3_2}
		Let $n\ge 3$ and let $\bm{m}\in\mathbb{N}^n_{\ge 1}$ such that there are distinct indices $i,j\in [n]$ with $m_i=3$ and $m_j\ge 2$. Then $\boxicity(D(\bm{m}))= \thdim(D(\bm{m}))= n$.
	\end{lemma}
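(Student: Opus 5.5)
The upper bound $\thdim(D(\bm{m}))\le n$ is \cref{prop:generalized_disjointness_graph_upper_bound}, so only $\boxicity(D(\bm{m}))\ge n$ needs proof. After relabelling we may assume $m_n=3$ and $m_1\ge 2$. The approach of \cref{lemma:some_mi_at_least_4} does not transfer directly. There the pair $\bm{x^n},\bm{y^n}$ had $n$-th coordinates $1$ and $2$, and with $m_n=3$ these two vertices become adjacent. So I would not look for an induced $n$-fold join of $\overline{K_2}$. Instead I would suppose $d=n-1$ interval graphs $H_1,\dots,H_{n-1}$ represent $D(\bm{m})$, pin down their roles using \cref{prop:C_4_and_d_partite_graph}\eqref{item:induced_d_partite_graph}, and derive a contradiction inside one $H_k$.

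\textbf{Step 1 (fixing the roles).} For $i\in[n-1]$ take $\bm{x^i},\bm{y^i}$ exactly as in \cref{lemma:some_mi_at_least_4}: coordinate $i$ equal to $0$, all other coordinates maximal, except that $y^i_n=m_n-1=2$. Then:
\begin{itemize}
\item each pair $\{\bm{x^i},\bm{y^i}\}$ is a non-edge, since the $i$-th coordinates sum to $0<m_i$;
\item pairs with different indices are completely joined, using $2+2\ge 3$ in coordinate $n$.
\end{itemize}
With $A_i=\{\bm{x^i},\bm{y^i}\}$, \cref{prop:C_4_and_d_partite_graph}\eqref{item:induced_d_partite_graph} gives a permutation $\pi$ such that the non-edges inside $A_i$ are deleted exactly by $H_{\pi(i)}$. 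Write $H\coloneqq H_{\pi(1)}$.

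\textbf{Step 2 (forcing an extra non-edge into $H$).} Let
\[
\bm{x^n}=(m_1,m_2,\dots,m_{n-1},1),\qquad \bm{y^n}=(m_1-1,m_2,\dots,m_{n-1},1).
\]
Since $1+1<3$ in coordinate $n$, they are non-adjacent. A direct check shows both are adjacent to $\bm{x^i}$ and $\bm{y^i}$ for every $i\in\{2,\dots,n-1\}$; the tight case is $y^n_n+y^i_n=1+2=3$. By \cref{prop:C_4_and_d_partite_graph}\eqref{item:induced_C_4}, the non-edge $\{\bm{x^n},\bm{y^n}\}$ can therefore only be deleted in $H$. In $D(\bm{m})$, hence in $H$, $\bm{x^n}$ is adjacent to both $\bm{x^1}$ and $\bm{y^1}$. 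The vertex $\bm{y^n}$ is non-adjacent to both, because $(m_1-1)+0<m_1$.

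\textbf{Step 3 (the contradiction, the main obstacle).} I would add further vertices whose non-edges are forced into $H$ by the same $C_4$-argument. Natural candidates are vertices that are maximal in all coordinates $2,\dots,n-1$ and vary in coordinates $1$ and $n$, for instance
\[
(1,m_2,\dots,m_{n-1},3),\qquad (m_1-1,m_2,\dots,m_{n-1},2),\qquad (0,m_2,\dots,m_{n-1},1).
\]
Every such vertex is automatically joined to $A_2\cup\dots\cup A_{n-1}$ as soon as its $n$-th coordinate is at least $1$. So all non-edges among these vertices and $A_1\cup\{\bm{x^n},\bm{y^n}\}$ must be deleted in $H$. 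On the other hand, $H$ must keep every edge of $D(\bm{m})$ among them.

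The goal is to pick a handful of such vertices whose forced pattern of edges and non-edges in $H$ is not an interval graph. I would try two targets:
\begin{itemize}
\item an induced $C_4$ in $H$, which I would try first, using $\bm{x^n}$ together with $\bm{x^1},\bm{y^1}$ and a vertex with first coordinate $m_1-1$ and $n$-th coordinate $2$;
\item otherwise an asteroidal triple.
\end{itemize}
In effect this is a two-dimensional sub-problem: the graph induced on coordinates $(1,n)$ with $(m_1,m_n)=(2,3)$, or more generally $m_1\ge2$, must have boxicity $2$. Verifying that exhibited configuration is where I expect the real work. If no single small obstruction is found, the fallback is a direct interval-endpoint argument in $H$, in the style of \cref{lemma:handle_at_most_2}. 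It would compare the positions of $I_{\bm{y^n}}$, $I_{\bm{x^1}}$ and $I_{\bm{y^1}}$, which must all be disjoint, with the interval of $\bm{x^n}$, which meets both $I_{\bm{x^1}}$ and $I_{\bm{y^1}}$ but not $I_{\bm{y^n}}$.
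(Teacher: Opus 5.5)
\textbf{There is a genuine gap: the contradiction step is missing, and the search you describe for it cannot succeed.} Steps 1 and 2 are correct. Step 3, however, only describes a search. The search space it describes contains no obstruction when $m_1=2$, for example $\bm{m}=(2,1,3)$ or $(2,1,\dots,1,3)$, which are not covered by \cref{lemma:some_mi_at_least_4} or \cref{lemma:boxicity_all_at_least_2}.

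Your mechanism (a $C_4$ against the pairs $\{\bm{x^i},\bm{y^i}\}$, $2\le i\le n-1$) forces a non-edge $\{\bm{p},\bm{q}\}$ into $H$ only if both endpoints are adjacent to $\bm{x^i}$ and $\bm{y^i}$ for all such $i$. That means $p_i=q_i=m_i$ for $2\le i\le n-1$ and $p_n,q_n\ge 1$. On this slice $S$ you get $H[S]=D(\bm{m})[S]$. Write $(a,b)$ for $(a,m_2,\dots,m_{n-1},b)$. Two vertices of $S$ are adjacent if and only if $a+a'\ge m_1$ and $(b,b')\ne(1,1)$.

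For $m_1=2$ this graph is an interval graph, via the following intervals:
\begin{itemize}
\item $(2,2)\mapsto[0,10]$ and $(0,1)\mapsto[0,0.5]$;
\item $(0,2)\mapsto[1,1.5]$, $(0,3)\mapsto[2,2.5]$ and $(2,1)\mapsto[1,6]$;
\item $(1,2),(1,3)\mapsto[5,8]$ and $(1,1)\mapsto[7,9]$.
\end{itemize}
Hence no $C_4$ and no asteroidal triple exists there. Your endpoint fallback also stays inside $S$, since $\bm{x^1},\bm{y^1},\bm{x^n},\bm{y^n}\in S$, so it cannot give a contradiction either.

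The two-dimensional heuristic misleads you. $D(m_1,3)$ does have boxicity $2$, but the $C_4$ witnessing this uses a vertex with last coordinate $0$. Such a vertex is not adjacent to $\bm{y^i}$, so it lies outside $S$. For $m_1\ge 3$ your plan does work: $\{(1,2),(1,3)\}$ against $\{\bm{x^n},\bm{y^n}\}$ is an induced $C_4$ in $S$. That does not cover the lemma.

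\textbf{Missing idea: force into $H$ a non-edge with an endpoint whose last coordinate is $0$.} This requires stable sets of size three. The paper does exactly this; its coordinate $n-1$ plays the role of your coordinate $1$, and its $\{\bm{u},\bm{v}\}$ is your $\{\bm{y^n},\bm{x^n}\}$.
\begin{enumerate}
\item For $2\le i\le n-1$ take $A_i=\{\bm{x^i},\bm{y^i},\bm{z^i}\}$, where $\bm{z^i}$ is $\bm{x^i}$ with first coordinate lowered to $m_1-1$.
\item Use $\{\bm{x^n},\bm{y^n}\}$ instead of $A_1$ as the remaining stable set; $A_1$ is not joined to $\bm{z^i}$.
\item Since $m_1\ge 2$, these $n-1$ sets are pairwise completely joined, so \cref{prop:C_4_and_d_partite_graph}(b) applies. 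Let $H$ be the graph deleting $\{\bm{x^n},\bm{y^n}\}$.
\item Let $\bm{w}=(m_1,m_2,\dots,m_{n-1},0)$. Both $\bm{w}$ and $\bm{x^n}$ are adjacent to $\bm{x^i}$ and $\bm{z^i}$, so the non-edge $\{\bm{x^n},\bm{w}\}$ can be deleted only in $H$.
\item Let $\bm{t}=(1,m_2,\dots,m_{n-1},3)$, your first candidate. The non-edge $\{\bm{x^1},\bm{t}\}$ is forced into $H$ via $\{\bm{x^i},\bm{y^i}\}$.
\item The pairs $\{\bm{x^n},\bm{w}\}$ and $\{\bm{x^1},\bm{t}\}$ are completely joined in $D(\bm{m})$. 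They span an induced $C_4$ whose two non-edges are both deleted by $H$, contradicting \cref{prop:C_4_and_d_partite_graph}(a).
\end{enumerate}
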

	\begin{proof}
		By \cref{prop:generalized_disjointness_graph_upper_bound,prop:lower_bound_via_D_n,lemma:lower_bound_n_minus_1}, we know that $n-1\le \boxicity(D(\bm{m}))\le\thdim(D(\bm{m}))\le n$. As such, establishing that $\boxicity(D(\bm{m}))\ne n-1$ will prove the lemma.
		We may assume without loss of generality that $m_n=3$ and $m_{n-1}\ge 2$.
		Assume towards a contradiction that $\boxicity(D(\bm{m}))= n-1$. Then there exist $n-1$ interval graphs $(H_i)_{i=1}^{n-1}$ whose intersection is $D(\bm{m})\eqqcolon (V,E)$. For $i\in [n-2]$, let $\bm{a^i},\bm{b^i},\bm{c^i}\in V$ be given by \begin{equation}
			a^i_j=\begin{cases}
				0 & j = i\\
				m_j & j\in [n-2] \setminus \{i\}\\
				m_j & j= n-1\\
				3 & j=n
			\end{cases},
			b^i_j=\begin{cases}
				0 & j=i\\
				m_j & j\in [n-2]\setminus\{i\}\\
				m_j-1 & j=n-1\\
				3 & j=n\end{cases}
			\text{ and }
			c^i_j=\begin{cases}
				0 & j=i\\
				m_j & j\in[n-2]\setminus \{i\}\\
				m_j & j=n-1 \\
				2 & j=n\end{cases}.\label{eq:def_a_i_b_i_c_i}
		\end{equation}
		We further define $\bm{u},\bm{v}\in V$ by
		\begin{equation}
			u_j =\begin{cases}
				m_j & j\in [n-2]\\
				m_j-1 & j= n-1\\
				1 & j = n
			\end{cases} \text{ and }
			v_j=\begin{cases}
				m_j & j\in [n-2]\\
				m_j & j= n-1\\
				1 & j = n
			\end{cases}.
		\end{equation}
		Note that each of the vertices $\bm{u}$, $\bm{v}$, and $\bm{a^i},\bm{b^i},\bm{c^i}\in [n-2]$ has at least one non-zero coordinate and one coordinate that is strictly smaller than the corresponding $m_j$. As such, they are indeed valid vertices.
		By definition, they are pairwise distinct. For $i\in [n-2]$, let $A_i\coloneqq \{\bm{a^i},\bm{b^i},\bm{c^i}\}$ and let $A_{n-1}\coloneqq \{\bm{u},\bm{v}\}$. Then $(A_i)_{i\in [n-1]}$ are pairwise disjoint stable sets in $D(\bm{m})$. Moreover, for $i\ne j\in [n-1]$, every vertex in $A_i$ is adjacent to every vertex in $A_j$. By \cref{prop:C_4_and_d_partite_graph}~\eqref{item:induced_d_partite_graph}, there is $\pi\colon[d]\leftrightarrow[d]$ such that for distinct $a,a'\in A_i$, we have $\{a,a'\}\notin E(H_j)$ if and only if $j=\pi(i)$. By relabeling, we may assume without loss of generality that $\pi$ is the identity, that is,
		\begin{equation}
			\{\bm{a^i},\bm{b^i}\},\{\bm{a^i},\bm{c^i}\},\{\bm{b^i},\bm{c^i}\}\notin E(H_i)\text{ for every }i\in [n-2] \text{ and } \{\bm{u},\bm{v}\}\notin E(H_{n-1}). \label{eq:non_edges}
		\end{equation}
		Let $\bm{w},\bm{x},\bm{y}\in V$ be given by 
		\begin{equation}
			w_j =\begin{cases}
				m_j & j\in [n-2]\\
				m_j & j= n-1\\
				0 & j = n
			\end{cases}, x_j=\begin{cases}
				m_j & j\in[n-2]\\
				0 & j= n-1\\
				3 & j=n
			\end{cases} \text{ and } y_j=\begin{cases}
				m_j & j\in[n-2]\\
				1 & j= n-1\\
				3 & j=n
			\end{cases}.
		\end{equation}
		Observe that each of the vertices $\bm{w}$, $\bm{x}$ and $\bm{y}$ has at least one coordinate which is non-zero and at least one coordinate which is strictly smaller than the corresponding $m_j$. As such, they are indeed valid vertices of $D(\bm{m})$.
		Further note that $\bm{w}$, $\bm{x}$ and $\bm{y}$ are pairwise distinct and distinct from the previously defined vertices $\bm{u}$, $\bm{v}$ and $\bm{a^i},\bm{b^i},\bm{c^i},i\in [n-2]$.
		
		The vertices $\bm{v}$ and $\bm{w}$ are non-adjacent in $D(\bm{m})$, but each of them is adjacent to both $\bm{a^i}$ and $\bm{b^i}$ for $i\in [n-2]$. By \eqref{eq:non_edges} and \cref{prop:C_4_and_d_partite_graph}~\eqref{item:induced_C_4}, we must have
		\begin{equation}
			\{\bm{v},\bm{w}\}\notin E(H_{n-1}). \label{eq:non_edges_2}
		\end{equation} 
		Moreover, $\bm{x}$ and $\bm{y}$ are non-adjacent in $D(\bm{m})$, but each of them is adjacent to $\bm{a^i}$ and $\bm{c^i}$ for $i\in [n-2]$. By \eqref{eq:non_edges} and \cref{prop:C_4_and_d_partite_graph}~\eqref{item:induced_C_4}, we must have
		\begin{equation}
			\{\bm{x},\bm{y}\}\notin E(H_{n-1}). \label{eq:non_edges_3}
		\end{equation} 
		However, each of $\bm{v}$ and $\bm{w}$ is adjacent to each of $\bm{x}$ and $\bm{y}$, so \eqref{eq:non_edges_2} and \eqref{eq:non_edges_3} contradict \cref{prop:C_4_and_d_partite_graph}~\eqref{item:induced_C_4}.
	\end{proof}
	\subsection{Proof of \cref{theorem:boxicity_integral_covering}\label{sec:proof_boxicity_integral_covering}}
	For each of the cases, we list the auxiliary statements that give the respective bounds on the boxicity:
	\begin{itemize}
		\item $\exists i\in [n]\colon m_i \ge  4$: The lower bound follows from \cref{lemma:some_mi_at_least_4}, the upper bound from \cref{prop:generalized_disjointness_graph_upper_bound}.
		\item $\exists i\in[n] \colon m_i=3 \text{ and } \exists j\in [n]\setminus \{i\}\colon m_j\ge 2$: The upper bound follows from \cref{prop:generalized_disjointness_graph_upper_bound}. The lower bound follows from \cref{lemma:boxicity_3_2} for $n\ge 3$, and from \cref{lemma:boxicity_all_at_least_2} for $n=2$.
		\item $\exists i\in [n]\colon m_i=3 \text{ and } \forall j\in [n]\setminus \{i\}\colon m_j=1$: The lower bound follows from \cref{lemma:lower_bound_n_minus_1}. The upper bound follows from \cref{lemma:box_D_3_1}.
		\item $\bm{m}= \bm{2}_n$: The upper bound follows from \cref{prop:generalized_disjointness_graph_upper_bound}, the lower bound from \cref{lemma:boxicity_all_at_least_2}.
		\item $\bm{m}\in\{1,2\}^n \text{ and } \bm{m}\ne \bm{2}_n \text{ and } \bm{m}\ne \bm{1}_2$: The upper bound follows from \cref{lemma:box_th_1_2}. The lower bound follows from \cref{lemma:lower_bound_n_minus_1}.
		\item $\bm{m}=\bm{1}_2$: \cref{lemma:integral_covering_special_cases} yields the desired statement.
	\end{itemize}
	
	\subsection{Proof of \cref{theorem:thdim_integral_covering}\label{sec:proof_thdim_integral_covering}}
	For $\bm{m}=\bm{1}_2$, \cref{lemma:integral_covering_special_cases} yields the desired statement. For $n\ge 3$ and $\bm{m}=\bm{1}_n$, the upper bound follows from \cref{lemma:box_th_1_2}, while \cref{lemma:lower_bound_n_minus_1} yields the lower bound. For $m\ne \bm{1}_n$, the upper bound follows from \cref{prop:generalized_disjointness_graph_upper_bound}, and the lower bound follows from \cref{lemma:thdim_at_least_one_2}.

\subsection{Proof of \cref{theorem:boxicity_disjointness_graph}}\label{subsec:proof_box_disjoint}
\cref{theorem:boxicity_disjointness_graph} follows from \cref{prop:D_n_iso,lemma:integral_covering_special_cases,theorem:boxicity_integral_covering,theorem:thdim_integral_covering}.
	%
	\section{Zero divisor graphs of reduced rings}
	
	\subsection{From zero divisor graphs to disjointness graphs}
	In this section, we shed some light on the connection between zero divisor graphs and disjointness graphs, which will allow us to analyze the structure of zero divisor graphs from a combinatorial perspective.
	While this connection has already been pointed out in prior works, such as \cite{ANDERSON20121626,CHANDRAN2026127}, we give a self-contained derivation for the sake of completeness.
	
	Let $R$ be a reduced ring and let $\Min(R)=\{M_1,\dots,M_n\}$ denote its collection of minimal prime ideals.
	For $i\in [n]$, we define $\mu_i\colon Z(R)\setminus \{0\}\rightarrow \{0,1\}$ by setting $\mu_i(r)=1$ if $r\in M_i$, and $\mu_i(r)=0$ otherwise.
	That is, $\mu \colon Z(R)\setminus \{0\} \rightarrow \{0,1\}^n \setminus \{\mathbf{0},\mathbf{1}\}$ defined as $\mu(r) \coloneqq (\mu_i(r))_{i\in[n]}$ is the indicator vector of those minimal prime ideals containing $r$.
	\begin{lemma}\label{cor:zero_divisor_graph_reduced_threshold_graphs}
		Let $r,s\in Z(R)\setminus \{0\}$.
		It holds that $r\cdot s =0$ if and only if $\mu(r)+\mu(s)\ge \mathbf{1}$.
	\end{lemma}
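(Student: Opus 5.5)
We prove the two directions separately, using only the primality of the ideals $M_i$ and \cref{cor:reduced_intersection_min_prime_ideals}. Note that $\mu(r)+\mu(s)\ge \mathbf{1}$ means exactly that for every $i\in[n]$, at least one of $r,s$ lies in $M_i$.

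\emph{($\Rightarrow$)} Suppose $r\cdot s=0$ and fix $i\in[n]$. Since $M_i$ is an ideal, $0\in M_i$, so $r\cdot s\in M_i$. Because $M_i$ is prime, $r\in M_i$ or $s\in M_i$, i.e.\ $\mu_i(r)=1$ or $\mu_i(s)=1$. As $i$ was arbitrary, $\mu(r)+\mu(s)\ge\mathbf{1}$.

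\emph{($\Leftarrow$)} Suppose $\mu(r)+\mu(s)\ge \mathbf{1}$. For each $i\in[n]$, one of $r,s$ lies in $M_i$. Since $M_i$ is an ideal, it absorbs multiplication by arbitrary ring elements, so $r\cdot s\in M_i$. Hence $r\cdot s\in\bigcap_{i\in[n]}M_i=\bigcap\Min(R)$, which equals $\{0\}$ by \cref{cor:reduced_intersection_min_prime_ideals} because $R$ is reduced. Thus $r\cdot s=0$.

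Neither direction presents a real obstacle. The only point requiring care is the reverse direction, where reducedness is essential: without it, the intersection of the minimal primes is the nilradical (\cref{theorem:intersection_of_minimum_ideals}), and the conclusion would only be that $r\cdot s$ is nilpotent. We also check that the codomain $\{0,1\}^n\setminus\{\mathbf{0},\mathbf{1}\}$ of $\mu$ is justified, although the equivalence itself does not need this. First, $\mu(r)\neq\mathbf{1}$ since $r\neq 0$ and the minimal primes intersect in $\{0\}$. Second, $\mu(r)\neq\mathbf{0}$ by \cref{cor:union_of_minimal_ideals_reduced}.
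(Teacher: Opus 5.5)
Your proof is correct and matches the paper's argument: primality of each $M_i$ gives the forward direction, and ideal absorption plus $\bigcap \Min(R)=\{0\}$ (\cref{cor:reduced_intersection_min_prime_ideals}) gives the converse. Your side check that $\mu$ indeed lands in $\{0,1\}^n\setminus\{\mathbf{0},\mathbf{1}\}$ is also correct, and your remark on why reducedness is needed in the converse is accurate.
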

	\begin{proof}
		By the definition of $\mu_i$, the condition $\mu_i(r)+\mu_i(s)\ge 1$ is equivalent to $r\in M_i$ or $s\in M_i$.
		First, suppose that $r\cdot s=0$. As every ideal contains $0$, we have $r\cdot s\in M_i$ for every $i\in [n]$.
		As each $M_i$ is prime, this implies $r\in M_i$ or $s\in M_i$ for every $i\in [n]$, as desired.
		Next, suppose that $r\in M_i$ or $s\in M_i$ for every $i\in [n]$. Then $r\cdot s\in M_i$ for every $i\in [n]$.
		Hence, $r\cdot s \in \bigcap \Min(R)$, and \cref{cor:reduced_intersection_min_prime_ideals} implies $r\cdot s=0$.
	\end{proof}
	Our next goal is to characterize the reduced zero divisor graph $\Gamma_E(R)$.
	To this end, we require the following lemma.
	\begin{lemma}\label{lemma:mu_surjective}
		The function $\mu$ is surjective, i.e. for every $\mathbf{v}\in \{0,1\}^n\setminus \{\mathbf{0},\mathbf{1}\}$, there is $r\in Z(R)\setminus \{0\}$ with $\mu(r) =\mathbf{v}$. 
	\end{lemma}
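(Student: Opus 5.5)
The plan is to construct, for each index set, an element lying in exactly the prescribed minimal primes. Fix $\mathbf{v}\in\{0,1\}^n\setminus\{\mathbf{0},\mathbf{1}\}$ and let $S\coloneqq\{i\in[n]\colon v_i=1\}$ and $T\coloneqq [n]\setminus S$; both are non-empty. For each $j\in T$, I will produce an element $r_j\in\bigcap_{i\ne j} M_i\setminus M_j$. Then I will multiply these together and set $r\coloneqq \prod_{j\in T} r_j$, which I expect to have exactly the membership pattern $\mathbf{v}$.

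\textbf{Step 1: existence of $r_j$.} The minimal primes are pairwise incomparable, so $M_i\not\subseteq M_j$ for $i\ne j$. If $\bigcap_{i\ne j} M_i\subseteq M_j$ held, \cref{lemma:prime_ideal_containment} (applied with the prime $P=M_j$ and the ideals $(M_i)_{i\ne j}$) would give some $i\ne j$ with $M_i\subseteq M_j$, a contradiction. Hence there is $r_j\in\bigcap_{i\ne j}M_i\setminus M_j$. The degenerate case $n=1$ does not arise, since then $\{0,1\}^n\setminus\{\mathbf{0},\mathbf{1}\}$ is empty.

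\textbf{Step 2: $\mu(r)=\mathbf{v}$.} For $j\in T$, every factor $r_k$ with $k\in T$ avoids $M_j$: the factor $r_j$ does by construction, and for $k\ne j$ we would need $r_k\notin M_j$, which is false. This needs adjusting. Instead, for $i\in S$, each $r_j$ (with $j\in T$, so $j\ne i$) lies in $M_i$, so the product lies in $M_i$; we need only one factor for that. For $j\in T$, the product must avoid $M_j$, but $r_k\in M_j$ for $k\ne j$. So I will take a sum rather than a product: set $r\coloneqq\sum_{j\in T} r_j$. For $i\in S$, each $r_j\in M_i$, hence $r\in M_i$. For $j\in T$, all summands $r_k$ with $k\ne j$ lie in $M_j$ while $r_j\notin M_j$, so $r\notin M_j$. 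Thus $\mu_i(r)=v_i$ for all $i$.

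\textbf{Step 3: $r\in Z(R)\setminus\{0\}$.} Since $T\ne\emptyset$, $r\notin M_j$ for some $j$, so $r\ne 0$. Since $S\ne\emptyset$, $r$ lies in some minimal prime, so \cref{lemma:minimal_prime_ideals_consist_of_zero_divisors} gives $r\in Z(R)$. This settles surjectivity. The only genuine obstacle is Step 1, the prime-avoidance argument. It also explains the paper's remark that each set $\bigcap_{i\ne j}M_i\setminus M_j$ is non-empty, and the loneliness notion then records when this set is a singleton.
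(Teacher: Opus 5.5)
Your argument is correct and is essentially the paper's proof: you obtain $r_j\in\bigcap_{i\ne j}M_i\setminus M_j$ from the prime-ideal containment lemma and the incomparability of minimal primes, then set $r=\sum_{j\in T}r_j$ and check membership coordinatewise (if $r\in M_j$, then $r_j=r-\sum_{k\ne j}r_k\in M_j$, a contradiction). The only flaw is presentational: delete the abandoned product attempt at the start of Step 2, including the sentence about needing $r_k\notin M_j$, since it is a leftover draft and not part of the proof.
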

	\begin{proof}
		We first show that for every $i\in [n]$, there is $r_i\in Z(R)\setminus \{0\}$ with $\mu_i(r_i)=0$ but $\mu_j(r_i)=1$ for every $j\in [n]\setminus \{i\}$.
		This is equivalent to showing that $\bigcap_{j\in [n]\setminus \{i\}} M_j\not\subseteq M_i$.
		The latter follows from \cref{lemma:prime_ideal_containment} and the fact that distinct minimal prime ideals, by definition, do not contain each other.
		
		Next, pick $\mathbf{v}\in \{0,1\}^n\setminus \{\mathbf{0},\mathbf{1}\}$ and let $I\coloneqq \{i\in [n]\colon v_i=0\}$.
		Define $r\coloneqq \sum_{i\in I} r_i$.
		We show that $r\in M_i$ if and only if $v_i=1$.
		As $\mathbf{v}\notin\{\mathbf{0},\mathbf{1}\}$, this will imply $r\in Z(R)\setminus \{0\}$ by \cref{cor:union_of_minimal_ideals_reduced,cor:reduced_intersection_min_prime_ideals} and complete the proof.
		
		First, let $j\in [n]$ with $v_j=1$. Then $j\notin I$, so $r_i\in M_j$ for all $i\in I$. As ideals are closed under addition, $r\in M_j$.
		Next, let $j\in [n]$ with $v_j=0$. Suppose towards a contradiction that $r\in M_j$. Then $r_j=r-\sum_{i\in I\setminus \{j\}} r_i\in M_j$ because $r_i\in M_j$ for $i\in I\setminus \{j\}$ and ideals are closed under addition and taking additive inverses. But this contradicts our choice of $r_j$.
	\end{proof}
	We can now recover the following characterization of the reduced graph $\Gamma_E(R)$ from \cite[Theorem 1.1]{ANDERSON20121626}.
	\begin{corollary}\label{cor:disjointness_graph_subgraph}
		The reduced graph $\Gamma_E(R)$ is isomorphic to $D(\bm{1}_n)$.
	\end{corollary}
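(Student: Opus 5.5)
The plan is to show that two vertices of $\Gamma(R)$ are equivalent under $\sim$ exactly when they have the same image under $\mu$. Then $\mu$ descends to a well-defined injective map on equivalence classes, and by \cref{lemma:mu_surjective} this map is surjective onto $\{0,1\}^n\setminus\{\mathbf{0},\mathbf{1}\}=V(D(\bm{1}_n))$. Finally, \cref{cor:zero_divisor_graph_reduced_threshold_graphs} shows that adjacency is preserved, since $D(\bm{1}_n)$ joins distinct $\bm{v},\bm{w}$ exactly when $\bm{v}+\bm{w}\ge\mathbf{1}$.

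\emph{Step 1: $\mu(r)=\mu(s)$ implies $r\sim s$.} For any $t$, \cref{cor:zero_divisor_graph_reduced_threshold_graphs} says that $t\in N(r)$ iff $t\ne r$ and $\mu(t)+\mu(r)\ge\mathbf{1}$. The same holds for $s$ with the same vector. It therefore remains to check that $r\notin N(r)$ and $s\notin N(r)$ when $r\neq s$.

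If $\mu(r)+\mu(r)\ge\mathbf{1}$, then $\mu(r)=\mathbf{1}$, which is impossible because $r\neq 0$ and \cref{cor:reduced_intersection_min_prime_ideals} holds. Hence $r^2\ne0$, which is consistent with $R$ being reduced.

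Thus $s$ is not adjacent to $r$, and vice versa. For $t\notin\{r,s\}$, adjacency to $r$ and to $s$ coincide. So $N(r)=N(s)$.

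\emph{Step 2: $r\sim s$ implies $\mu(r)=\mu(s)$.} Suppose $\mu_i(r)=1$ and $\mu_i(s)=0$ for some $i$. By \cref{lemma:mu_surjective}, pick $t$ with $\mu(t)=\mathbf{1}-e_i$; this is a valid vertex as long as $n\ge2$.

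Then $\mu(t)+\mu(r)\ge\mathbf{1}$, so $t\in N(r)$, unless $t=r$. Also $\mu(t)+\mu(s)$ has a $0$ in coordinate $i$, so $t\notin N(s)$. This contradicts $N(r)=N(s)$.

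The main obstacle is the degenerate case $t=r$, that is, $\mu(r)=\mathbf{1}-e_i$. In that case I would instead use $t'$ with $\mu(t')=e_i$. This gives $\mu(t')+\mu(r)=\mathbf{1}$, so $t'\in N(r)$; here $t'\ne r$ unless $n=2$. Meanwhile $\mu(t')+\mu(s)$ has a $0$ in coordinate $i$, so $t'\notin N(s)$.

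For $n=2$ both choices can coincide with $r$: then $\mu(r)=e_1$ and $\mu(s)=e_2$. In this case $r$ and $s$ are adjacent, while $r\notin N(r)$, so the neighbourhoods differ anyway ($r\in N(s)\setminus N(r)$). The case $n=1$ is trivial, since both graphs are empty. In all cases the classes correspond bijectively to $\mu$-values.

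\emph{Step 3: the map is an isomorphism.} The map $[r]\mapsto\mu(r)$ is a bijection onto $V(D(\bm{1}_n))$. Classes $[r]\ne[s]$ are adjacent iff $rs=0$, iff $\mu(r)+\mu(s)\ge\mathbf{1}$, which is adjacency in $D(\bm{1}_n)$. Here distinct classes have distinct images by Step 2, so no loops arise.
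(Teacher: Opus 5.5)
Your proof is correct and follows the paper's argument. You show $r\sim s$ if and only if $\mu(r)=\mu(s)$, using the witness $t$ with $\mu(t)=\mathbf{1}-e_i$ for the converse, and then transfer adjacency via $rs=0 \iff \mu(r)+\mu(s)\ge\mathbf{1}$. Two small corrections. The ``degenerate case'' $t=r$ in Step 2 cannot occur, since you chose $i$ with $\mu_i(r)=1$ whereas $(\mathbf{1}-e_i)_i=0$; that case analysis, including the $n=2$ discussion, is vacuous and can be dropped. Also, in Step 3, injectivity of $[r]\mapsto\mu(r)$ comes from Step 1, not Step 2.
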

	\begin{proof}
		We show that for $r_1,r_2\in Z(R)\setminus \{0\}$, we have $r_1\sim r_2$ if and only if $\mu(r_1)=\mu(r_2)$.
		If $\mu(r_1)=\mu(r_2)$, then as $\mu(r_1)=\mu(r_2)\ne \bm{1}$, there is $i$ with $\mu_i(r_1)=\mu_i(r_2)=0$.
		By \cref{cor:zero_divisor_graph_reduced_threshold_graphs}, $r_1$ and $r_2$ are not adjacent in $\Gamma(R)$. Moreover, every $r\in Z(R)\setminus \{0\}$ is adjacent to $r_1$ if and only if it is adjacent to $r_2$. Hence, $r_1\sim r_2$.
		
		Conversely, suppose that $\mu(r_1)\ne \mu(r_2)$. W.l.o.g.\ assume there is $i$ with $\mu_i(r_1)=0$ but $\mu_i(r_2)=1$. Let $\bm{v}\in \{0,1\}^n$ with $v_i=0$ and $v_j=1$ for $j\ne i$. By \cref{lemma:mu_surjective}, let $r\in Z(R)\setminus \{0\}$ with $\mu(r)=\bm{v}$. Then $r$ is adjacent to $r_2$ but not to $r_1$, so $r_1\not\sim r_2$.
		
		Hence, \cref{lemma:mu_surjective,cor:zero_divisor_graph_reduced_threshold_graphs} tell us that $\mu$ induces an isomorphism between $\Gamma_E(R)$ and $D(\bm{1}_n)$.
	\end{proof}

	\subsection{Boxicity and threshold dimension}
	Using the connection between zero divisor graphs, threshold graphs and integral covering graphs established in the previous section, we are now ready to exactly determine the boxicity and threshold dimension of zero divisor graphs. To state our result, we require the following definition.
	\begin{definition}
		We call an index $i\in [n]$ \emph{lonely} if there exists a unique $r\in R$ such that $\mu_i(r)=0$ and $\mu_j(r)=1$ for all $j\in [n]\setminus \{i\}$.
	\end{definition}
	Note that $i\in [n]$ is lonely if and only if $|\bigcap_{j\in [n]\setminus \{i\}} M_j\setminus M_i|=1$.
	We restate \cref{thm:box_thd_reduced} for convenience.
	\thmBoxThdReduced*
	\begin{proof}
		If $n=1$, then $Z(R)=\{0\}$, and, hence, $\Gamma(R)=\emptyset$ by \cref{cor:reduced_intersection_min_prime_ideals,cor:union_of_minimal_ideals_reduced}.
		
		Next, let $n\ge 2$. If there is no lonely index, then, for every $i\in [n]$, there exist $r_i\ne s_i\in Z(R)\setminus \{0\}$ such that $\mu_i(r_i)=\mu_i(s_i)=0$ and $\mu_j(r_i)=\mu_j(s_j)=1$ for all $j\in [n]\setminus \{i\}$. By \cref{cor:zero_divisor_graph_reduced_threshold_graphs}, $\Gamma(R)[\{r_i,s_i\colon i\in [n]\}]$ forms an $n$-fold join of $\overline{K_2}$. This implies $n\le \boxicity(\Gamma(R))\le \thdim(\Gamma(R))$. The matching upper bound follows from \cref{cor:zero_divisor_graph_reduced_threshold_graphs}.
		
		If there is a lonely index, then \cref{lemma:box_th_1_2} implies that $\thdim(\Gamma(R))\le n-1$. For $n\ge 3$, \cref{theorem:boxicity_integral_covering,cor:disjointness_graph_subgraph} yield the matching lower bound.
		
		Finally, for $n=2$, if there is an index that is not lonely, then the same arguments as before yield a subgraph of $\Gamma(R)$ isomorphic to $\overline{K_2}$, giving $1\le \boxicity(\Gamma(R))\le \thdim(\Gamma(R))\le 1$. On the other hand, if both $1$ and $2$ are lonely indices, then $\Gamma(R)\cong D(\bm{1}_2)$ has boxicity and threshold dimension $0$ by \cref{theorem:boxicity_integral_covering}.
	\end{proof}
	
	\section{Quotient rings of principal ideal domains}
	\subsection{Principal ideal domains}
For this section, the term `ring' refers to a commutative, non-zero ring with $1$.
	\begin{definition}
		A ring $R$ is called a \emph{domain} if $\{0\}$ is a prime ideal, i.e., if $r\cdot s=0$ implies $r=0$ or $s=0$.
	\end{definition}
	Note that a ring is a domain if and only if $Z(R)=\{0\}$.
	\begin{definition}
		A domain $R$ is called a \emph{principal ideal domain (pid)} if every ideal $I\subseteq R$ is of the form $I=a\cdot R=\{a\cdot r\colon r\in R\}$.
	\end{definition}
	Examples for pids include the ring of integers or polynomial rings of the form $K[x]$, where $K$ is a field. Like the integers, principal ideal domains permit a factorization into \emph{prime elements}, which is unique up to multiplication with \emph{units}.
	\begin{definition}
		Let $R$ be a ring. We call $p\in R$ a \emph{prime element} if $p\cdot R$ is a prime ideal of $R$. We call $u\in R$ a \emph{unit} if there exists $v\in R$ such that $u\cdot v=1$. We denote the set of units of $R$ by $R^\times$.
	\end{definition}
	Given a ring $R$, we can define an equivalence relation $\sim$ on $R$ by letting $r\sim s$ if and only if there exists a unit $u\in R^\times$ such that $r=u\cdot s$. Note that if $r\sim s$, then $r$ is a prime element if and only if $s$ is. Two prime elements $r$ and $s$ with $r\sim s$ are called \emph{associated}.
	\begin{definition}[prime factorization]
		Let $R$ be a pid and let $r\in R\setminus \{0\}$. A \emph{prime factorization of $r$} consists of a unit $u\in R^\times$, $n\in\mathbb{N}_0$, pairwise non-associated primes $(p_i)_{i=1}^n$ and exponents $(m_i)_{i=1}^n\in \mathbb{Z}_{>0}^n$ such that $r=u\cdot\prod_{i=1}^n p_i^{m_i}$, where $\prod \emptyset \coloneqq 1$.
	\end{definition}
	
	\begin{theorem}[unique prime factorization]\label{theorem:prime_factorization}
		Let $R$ be a pid and let $r\in R\setminus \{0\}$. Then $r$ admits a prime factorization, which is unique in the following sense:
		Suppose that $r=u\cdot\prod_{i=1}^n p_i^{m_i}$ and $r=u'\cdot \prod_{i=1}^{n'}{p'_i}^{m'_i}$ are two prime factorizations of $r$. Then $n=n'$ and there exists a bijection $\varphi\colon [n]\leftrightarrow [n]$ such that $p'_{\varphi(i)}\sim p_i$ and $m'_{\varphi(i)}=m_i$ for all $i\in [n]$.
	\end{theorem}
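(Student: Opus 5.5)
The plan is the standard proof of unique factorization in a pid, split into existence and uniqueness.

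\textbf{Existence.} First note that in a pid every irreducible element $p$ is prime. If $p\cdot R\subseteq a\cdot R$, then $p=a b$, so $a$ or $b$ is a unit, and hence $p\cdot R$ is maximal among proper principal ideals. Since every ideal is principal, $p\cdot R$ is a maximal ideal and therefore prime. Next, suppose some non-zero non-unit $r$ had no factorization into irreducibles. Write $r=r_1$; it is not irreducible, so $r_1=a b$ with neither factor a unit, and at least one factor, call it $r_2$, again has no factorization. This gives a strictly ascending chain $r_1R\subsetneq r_2R\subsetneq\cdots$. Its union is an ideal, hence principal, say $cR$. Then $c$ lies in some $r_kR$, so the chain stabilises at $k$, a contradiction. (The ascending chain condition is the key input.) Units have the empty factorization. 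Finally, collect associated primes together by absorbing units into $u$, which yields pairwise non-associated $p_i$ with exponents $m_i>0$.

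\textbf{Uniqueness.} Proceed by induction on $\sum_i m_i$. If this sum is $0$, then $r$ is a unit, and the other factorization must also be empty, since a prime divides no unit. Otherwise $p_1$ divides $u'\prod_i {p'_i}^{m'_i}$. Because $p_1R$ is prime and $p_1$ does not divide the unit $u'$, we get $p_1\mid p'_j$ for some $j$. A prime is irreducible in a domain: from $p=ab$ we get $p\mid a$ or $p\mid b$, say $a=pc$, so $p=pcb$, hence $cb=1$. Therefore $p'_j=p_1 v$ with $v$ a unit, i.e.\ $p'_j\sim p_1$. Set $\varphi(1)=j$ and cancel one factor of $p_1$, which is allowed since $R$ is a domain, absorbing $v$ into the unit. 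Then apply induction to obtain a matching of the remaining multiset of primes. Since the $p_i$ are pairwise non-associated, the matching respects the grouping, which gives $n=n'$ and $m'_{\varphi(i)}=m_i$.

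The only mildly delicate step is existence, specifically the termination argument via the ascending chain condition. Since this is a classical textbook theorem, I would in practice cite it, for example from \cite{bourbaki1972commutative}, rather than reprove it in full.
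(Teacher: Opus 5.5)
Your proposal is correct. It is the standard argument: the ascending chain condition on principal ideals together with ``irreducible $\Rightarrow$ prime'' gives existence, and primality plus cancellation in a domain gives uniqueness. The paper gives no proof of this theorem and simply states it as a classical fact, which matches your suggestion to cite it. The one convention your argument uses silently is that prime elements are non-units, i.e.\ that prime ideals are proper, which you need for ``a prime divides no unit''; the paper's definition of a prime ideal also leaves this implicit.
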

	\begin{definition}
		Let $r,s\in R$. We write $r\mid s$ if there exists $t\in R$ with $r\cdot t=s$.
	\end{definition}
	Note that $1\mid r$ and $r\mid 0$ for every $r\in R$.
	\begin{proposition}\label{prop:division}
		Let $r,s\in R\setminus \{0\}$. Let $r=u\cdot \prod_{i=1}^n p_i^{m_i}$ and $s=u'\cdot \prod_{i=1}^{n'} p_i^{m'_i}$ be prime factorizations of $r$ and $s$.
		Then $r\mid s$ if and only if for every $i\in [n]$, there is $j\in [n']$ with $p_i\sim p'_j$ and $m_i\le m'_j$. In particular, $r\mid s$ if and only if $p_i^{m_i}\mid s$ for every $i\in [n]$.
	\end{proposition}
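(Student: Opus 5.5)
The plan is to reduce everything to the defining property of prime elements together with unique factorization (\cref{theorem:prime_factorization}). I will first prove the forward direction. Suppose $r\mid s$, say $s=r\cdot t$ with $t\ne 0$, since $s\ne 0$. Write $t=u''\cdot\prod_k q_k^{e_k}$ as a prime factorization. Then
\[
s=(u u'')\prod_{i=1}^n p_i^{m_i}\prod_k q_k^{e_k}.
\]
After merging associated primes (absorbing the units into the leading unit), this becomes a prime factorization of $s$ in which each $p_i$ appears with exponent at least $m_i$. By the uniqueness part of \cref{theorem:prime_factorization}, this factorization agrees with $s=u'\prod_j {p'_j}^{m'_j}$ up to a bijection and associates. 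Hence for each $i$ there is a $j$ with $p'_j\sim p_i$ and $m'_j\ge m_i$. The bookkeeping in this merging step is the main (though routine) obstacle: I must ensure that combining associated primes yields a genuine prime factorization, with pairwise non-associated primes and positive exponents, so that uniqueness applies.

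For the converse, I will assume that for every $i$ there is a $j=\varphi(i)$ with $p'_j=v_i p_i$ for a unit $v_i$ and $m'_j\ge m_i$. The indices $\varphi(i)$ are pairwise distinct, because the $p_i$ are pairwise non-associated and associatedness is an equivalence relation. I can therefore explicitly set
\[
t\coloneqq u' u^{-1}\prod_{i} v_i^{m_i}\, {p'_{\varphi(i)}}^{m'_{\varphi(i)}-m_i}\prod_{j\notin\varphi([n])}{p'_j}^{m'_j}.
\]
Substituting $p'_{\varphi(i)}{}^{m_i}=v_i^{m_i}p_i^{m_i}$ then shows directly that $r\cdot t=s$.

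For the ``in particular'' statement, note that $p_i^{m_i}$ has the one-term factorization $1\cdot p_i^{m_i}$. The first part therefore gives that $p_i^{m_i}\mid s$ holds iff some $p'_j\sim p_i$ satisfies $m'_j\ge m_i$. Taking the conjunction over all $i\in[n]$ reproduces exactly the criterion for $r\mid s$, which completes the equivalence.
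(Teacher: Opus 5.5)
Your proof is correct. The paper states this proposition without proof, treating it as a standard consequence of unique factorization, so there is no paper argument to compare against; your writeup supplies the routine derivation it omits, and each step checks out.

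\begin{itemize}
\item \textbf{Forward direction.} Merging $t$'s factorization into $r$'s and then applying the uniqueness part of \cref{theorem:prime_factorization} works. The bookkeeping you flag is harmless. The primes within each of the factorizations of $r$ and $t$ are pairwise non-associated, and associatedness is an equivalence relation, so each $q_k$ is associated to at most one $p_i$ and each $p_i$ to at most one $q_k$. The merged product is therefore a genuine prime factorization in which each $p_i$ has exponent $m_i$ or $m_i+e_k$.
\item \textbf{Converse.} Injectivity of $\varphi$ follows from the $p_i$ being pairwise non-associated, and your explicit cofactor $t$ does satisfy $r\cdot t=s$.
\item \textbf{The ``in particular'' clause.} It follows by applying the first part to the one-prime factorization $1\cdot p_i^{m_i}$ for each $i$ and taking the conjunction over $i\in[n]$.
\end{itemize}
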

	\subsection{Quotient rings}
	Let $R$ be a ring and let $I\subsetneq R$ be a proper ideal of $R$. We define an equivalence relation $\sim_I$ on $R$ by setting $r\sim_I s$ if and only if $r-s\in I$. Let $R/I$ be the set of equivalent classes of $\sim_I$ and let $\pi\colon R\rightarrow R/I$ be the projection map sending $r\in R$ to its equivalence class. Then there is a unique ring structure on $R/I$ that turns $\pi$ into a ring homomorphism. We call $R/I$, equipped with this ring structure, a \emph{quotient ring} of $R$.
	\begin{proposition}
		Let $R$ be a ring and let $I\subseteq R$ be an ideal. Then $R/I$ is a domain if and only if $I$ is a prime ideal.
	\end{proposition}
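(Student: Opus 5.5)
We need to prove: $R/I$ is a domain if and only if $I$ is a prime ideal. Here "domain" means $\{0\}$ is a prime ideal of $R/I$, i.e., $R/I$ is a non-zero ring in which $\bar r\cdot\bar s=0$ implies $\bar r=0$ or $\bar s=0$. The plan is to translate both conditions through the projection $\pi\colon R\to R/I$. The key fact is that for $r\in R$ we have $\pi(r)=0$ if and only if $r\sim_I 0$, i.e., if and only if $r\in I$; in other words, $\ker\pi=I$. We also use that $\pi$ is a surjective ring homomorphism, so $\pi(r)\pi(s)=\pi(rs)$.

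First we check non-triviality. Since $I\subsetneq R$ is proper, $1\notin I$, so $\pi(1)\neq 0$ and $R/I$ is a non-zero ring. Hence $\{0\}$ is a proper ideal of $R/I$, and the only question is whether it is prime.

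($\Leftarrow$) Suppose $I$ is prime, and let $\bar r,\bar s\in R/I$ with $\bar r\bar s=0$. By surjectivity, write $\bar r=\pi(r)$ and $\bar s=\pi(s)$. Then $\pi(rs)=0$, so $rs\in I$. Primality of $I$ gives $r\in I$ or $s\in I$, hence $\bar r=0$ or $\bar s=0$.

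($\Rightarrow$) Suppose $R/I$ is a domain, and let $rs\in I$. Then $\pi(r)\pi(s)=\pi(rs)=0$, so $\pi(r)=0$ or $\pi(s)=0$. Therefore $r\in I$ or $s\in I$, and $I$ is prime.

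There is no real obstacle. The only point needing care is the convention for primality: a prime ideal must be proper. In the paper's definition this is implicit for $I$, since $I$ is assumed proper. On the quotient side, it is exactly what the non-triviality check above ensures.
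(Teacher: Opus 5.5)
Your argument is correct and complete. It is the standard proof via $\ker\pi=I$ and surjectivity of $\pi$, and the paper states this proposition as a well-known fact without proof. Your reliance on $I\subsetneq R$ matches the paper's standing assumption in its quotient-ring construction. The paper's definition of a prime ideal does not by itself exclude $I=R$, so it is this properness assumption, together with the convention that rings are non-zero, that makes the equivalence hold.
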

	\subsection{Zero divisor graphs of quotient rings of pids}
	In this section, we will analyze the structure of zero divisor graphs of finite quotient rings of pids. Note that these include rings of the form $\mathbb{Z}/N\mathbb{Z}$ for $N\in\mathbb{N}_{\ge 2}$, studied in prior works~\cite{2025_Chandran_ZeroDivisor,2026_Chandran_Compressed}.
	For the remainder of this section, let $R$ be a pid and let $I=g\cdot R\subsetneq R$ be a proper ideal such that $Q\coloneqq R/I$ is finite. We will assume that $I$ is not prime; otherwise $Q$ will not have any zero divisors except $0$. By \cref{theorem:prime_factorization}, there exist a unit $u$, pairwise non-equivalent primes $(p_i)_{i=1}^n$ and positive exponents $(m_i)_{i=1}^n$ such that $g=u\cdot \prod_{i=1}^n p_i^{m_i}$. The fact that $I$ is proper and not prime implies $\sum_{i=1}^n m_i\ge 2$.
	In order to analyze the zero divisor graph of $Q$, we need to introduce some notation.
	For $r\in R\setminus \{0\}$ and $i\in [n]$, we define $a_i(r)\coloneqq \max\{a\in\mathbb{N}_0\colon p_i^a \mid r\}$. Note that this maximum exists by \cref{prop:division}.
	
We remark that this function has already been considered in~\cite{2025_Chandran_ZeroDivisor}.
	
	\begin{proposition}\label{prop:exponents_multiplication}
		Let $r,s\in R\setminus \{0\}$ and $i\in [n]$. Then $a_i(r\cdot s)=a_i(r)+a_i(s)$.
	\end{proposition}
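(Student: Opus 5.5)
The plan is to factor out the full powers of $p_i$ from $r$ and $s$ and use that $p_i$ is prime to control what remains. Set $a\coloneqq a_i(r)$ and $b\coloneqq a_i(s)$. By definition of $a_i$ we can write $r=p_i^{a}r'$ and $s=p_i^{b}s'$ with $r',s'\in R$. Then $p_i\nmid r'$, since otherwise $p_i^{a+1}\mid r$, contradicting maximality of $a$; likewise $p_i\nmid s'$. Note that $r',s'\ne 0$ because $r,s\ne 0$.

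The inequality $a_i(r\cdot s)\ge a+b$ is immediate from $r\cdot s=p_i^{a+b}\,(r's')$.

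For the reverse inequality, suppose towards a contradiction that $p_i^{a+b+1}\mid rs$. Then $p_i^{a+b}r's'=p_i^{a+b+1}t$ for some $t\in R$. Since $R$ is a domain and $p_i\ne 0$, we may cancel $p_i^{a+b}$ to get $r's'=p_i t$, so $r's'\in p_iR$. Because $p_iR$ is a prime ideal, this gives $r'\in p_iR$ or $s'\in p_iR$, contradicting the choice of $r'$ and $s'$. Hence $a_i(rs)=a+b$.

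The only step needing care is the cancellation, which uses that $R$ is a domain; it is valid because the computation takes place in $R$, not in $Q$. Alternatively, one could argue via the uniqueness of prime factorizations (\cref{theorem:prime_factorization}) together with \cref{prop:division}, by multiplying the two factorizations and reading off the exponent of $p_i$. The direct argument above is shorter, so I would use it.
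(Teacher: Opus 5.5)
Your proof is correct. The paper gives no proof of this proposition. It states it as a standard fact, and the surrounding text (the remark that the maximum defining $a_i$ exists by \cref{prop:division}) indicates it is meant to be read off from unique factorization, \cref{theorem:prime_factorization}: identify $a_i(r)$ with the exponent of (an associate of) $p_i$ in a factorization of $r$, multiply the factorizations of $r$ and $s$, and compare exponents.

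Your route is more elementary. Apart from the existence of $a_i(r)$ and $a_i(s)$, you use only that $p_iR$ is a prime ideal and that nonzero factors cancel in a domain. So the argument holds for any nonzero prime element of any domain in which these maxima exist, and it never needs to match $a_i$ with factorization exponents. The factorization route is a one-liner once uniqueness and \cref{prop:division} are available, which is presumably why the paper omits it.

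Three small facts you use tacitly are worth stating:
\begin{itemize}
\item $rs\neq 0$ because $R$ is a domain, so $a_i(rs)$ is defined at all.
\item $p_i\neq 0$, which holds because $p_i^{m_i}\mid g$ and $g\neq 0$ (if $g=0$, then $I=\{0\}$ would be prime).
\item Ruling out $p_i^{a+b+1}\mid rs$ suffices, since divisibility by any higher power of $p_i$ would imply divisibility by this one.
\end{itemize}
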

	
	\begin{proposition}\label{prop:zero_product_in_quotient}
		Let $r,s\in R\setminus \{0\}$. Then $\pi(r\cdot s)=\pi(r)\cdot \pi(s)=0$ if and only if for every $i\in [n]$, $a_i(r)+a_i(s)\ge m_i$.
	\end{proposition}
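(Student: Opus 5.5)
The plan is to reduce the statement to divisibility in $R$. Since $\pi$ is a ring homomorphism, $\pi(r\cdot s)=\pi(r)\cdot\pi(s)$ holds automatically. Moreover, $\pi(r\cdot s)=0$ if and only if $r\cdot s\in I=gR$, that is, if and only if $g\mid r\cdot s$. So it suffices to show that
\[
g\mid r\cdot s \iff a_i(r)+a_i(s)\ge m_i \text{ for every } i\in[n].
\]
Note that $r\cdot s\neq 0$, because $R$ is a domain and $r,s\neq 0$. Hence $a_i(r\cdot s)$ is defined, and \cref{prop:exponents_multiplication} gives $a_i(r\cdot s)=a_i(r)+a_i(s)$.

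Next we apply \cref{prop:division} to $g=u\cdot\prod_{i=1}^n p_i^{m_i}$ and to $r\cdot s$. Its ``in particular'' clause says that $g\mid r\cdot s$ if and only if $p_i^{m_i}\mid r\cdot s$ for every $i\in[n]$. It remains to check that $p_i^{m_i}\mid t$ is equivalent to $m_i\le a_i(t)$ for nonzero $t$.

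\textbf{Forward direction.} If $p_i^{m_i}\mid t$, then $m_i\le a_i(t)$ by the definition of $a_i(t)$ as a maximum.

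\textbf{Backward direction.} Suppose $m_i\le a_i(t)$. Then $p_i^{a_i(t)}\mid t$, and $p_i^{m_i}\mid p_i^{a_i(t)}$, so $p_i^{m_i}\mid t$ by transitivity of divisibility.

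Combining these, $g\mid r\cdot s$ holds if and only if $a_i(r)+a_i(s)=a_i(r\cdot s)\ge m_i$ for all $i$, which is the claim.

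I expect no serious obstacle. The only point needing care is the use of \cref{prop:division}: the ``in particular'' clause must apply to $g$ with its unit factor $u$. This is fine, because units divide everything and the criterion is stated only in terms of prime powers. If one prefers to avoid relying on that clause, a direct argument works: the elements $p_i^{m_i}$ are pairwise coprime, so their product divides $r\cdot s$ once each of them does. This coprimality can be obtained from unique factorization (\cref{theorem:prime_factorization}), by comparing the prime factorization of $r\cdot s$ with the exponents of $g$.
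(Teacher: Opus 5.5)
Your proof is correct and takes the same route as the paper: reduce $\pi(r\cdot s)=0$ to $g\mid r\cdot s$, then combine \cref{prop:division} with \cref{prop:exponents_multiplication}. You also make explicit two details the paper leaves implicit, namely that $r\cdot s\neq 0$ (so $a_i(r\cdot s)$ is defined) and that $p_i^{m_i}\mid t$ holds exactly when $m_i\le a_i(t)$.
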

	\begin{proof}
		We have $\pi(r\cdot s)=\pi(r)\cdot \pi(s)=0$ if and only if $r\cdot s\in I$, which is equivalent to $g\mid r\cdot s$. Hence, \cref{prop:division,prop:exponents_multiplication} yield the desired statement.
	\end{proof}
	Motivated by \cref{prop:zero_product_in_quotient}, for $q=\pi(r)\in Q\setminus \{0\}$, we define $\alpha(q)=(\alpha_i(q))_{i=1}^n\in \prod_{i=1}^n \{0,\dots,m_i\}$ by setting $\alpha_i(q)=\min\{a_i(r),m_i\}$. \cref{prop:exponents_well_defined} tells us that $\alpha(q)$ is indeed well-defined, meaning that it does not depend on the choice of the representative $r$.
	\begin{proposition}\label{prop:exponents_well_defined}
		Let $r,s\in R\setminus\{0\}$ with $\pi(r)=\pi(s)$. Then for every $i\in [n]$, $\min\{a_i(r),m_i\}=\min\{a_i(s),m_i\}$.
	\end{proposition}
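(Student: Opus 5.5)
We prove \cref{prop:exponents_well_defined} by exploiting that $r-s$ is divisible by $g$, and hence by $p_i^{m_i}$. Since $\pi(r)=\pi(s)$, we have $r-s\in I=g\cdot R$, so $g\mid r-s$. By \cref{prop:division}, this gives $p_i^{m_i}\mid r-s$ for every $i\in[n]$. (If $r-s=0$ the claim is trivial, so we may assume $r\ne s$; divisibility $p_i^{m_i}\mid r-s$ holds in either case directly from $g\mid r-s$.)

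The main step is the following observation. For any $k\le m_i$, we have $p_i^k\mid r$ if and only if $p_i^k\mid s$. Indeed, if $p_i^k\mid r$, then $p_i^k\mid p_i^{m_i}$ and $p_i^{m_i}\mid r-s$, so $p_i^k$ divides $r-(r-s)=s$, using that divisibility is closed under differences. The converse follows by symmetry.

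It then follows that $\min\{a_i(r),m_i\}$, which equals the largest $k\le m_i$ with $p_i^k\mid r$, coincides with the largest $k\le m_i$ with $p_i^k\mid s$, namely $\min\{a_i(s),m_i\}$. For this identification we also use that the set $\{a : p_i^a\mid r\}$ is downward closed, which holds since $p_i^a\mid p_i^{a+1}$.

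There is no real obstacle; the only care needed is to phrase the characterization of $\min\{a_i(\cdot),m_i\}$ correctly as the largest exponent $k\le m_i$ with $p_i^k$ dividing the element, and to note that the maximum defining $a_i$ exists because $r,s\ne 0$.
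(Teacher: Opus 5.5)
Your proposal is correct and follows the paper's proof: from $r-s\in gR$ you get $p_i^{m_i}\mid r-s$, so $p_i^t\mid r$ if and only if $p_i^t\mid s$ for every $t\le m_i$, and the truncated exponents coincide. Your remarks on downward closure and on the case $r=s$ only make explicit details the paper leaves implicit.
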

	\begin{proof}
		As $r-s\in I=g\cdot R$, we have $p_i^{m_i}\mid r-s$. Hence, for every $t\le m_i$, $p_i^t\mid r$ if and only if $p_i^t\mid s$.
	\end{proof}
	\cref{prop:zero_product_in_quotient,prop:exponents_well_defined} tell us that $\Gamma(Q)$ can be expressed as the intersection of $n$ threshold graphs.
	\begin{corollary}\label{cor:zero_divisor_quotient_threshold}
		$\Gamma(Q)$ is the intersection of the threshold graphs $T(\alpha_i\upharpoonright_{Z(Q)\setminus \{0\}},m_i)_{i\in [n]}$, where $\upharpoonright_{Z(Q)\setminus \{0\}}$ denotes the restriction of a function to the ground set $Z(Q)\setminus \{0\}$.
	\end{corollary}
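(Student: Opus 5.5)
We need to show that for distinct $q,q'\in Z(Q)\setminus\{0\}$, we have $q q'=0$ if and only if $\alpha_i(q)+\alpha_i(q')\ge m_i$ for every $i\in[n]$. The plan is to lift to representatives in $R$ and invoke \cref{prop:zero_product_in_quotient}, using \cref{prop:exponents_well_defined} so that $\alpha$ is a genuine function on $Z(Q)\setminus\{0\}$. Both threshold graphs and $\Gamma(Q)$ live on the vertex set $Z(Q)\setminus\{0\}$, so only the edge sets need comparing. The intersection of the $T(\alpha_i\upharpoonright_{Z(Q)\setminus\{0\}},m_i)$ consists exactly of the pairs satisfying $\alpha_i(q)+\alpha_i(q')\ge m_i$ for all $i$, so the equivalence above is precisely the claim.

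Concretely, pick $r,s\in R$ with $\pi(r)=q$ and $\pi(s)=q'$. Since $q,q'\neq 0$, we have $r,s\notin I$, and in particular $r,s\neq 0$, so $a_i(r)$ and $a_i(s)$ are defined. By \cref{prop:zero_product_in_quotient}, $qq'=0$ holds if and only if $a_i(r)+a_i(s)\ge m_i$ for all $i\in[n]$. It remains to replace $a_i$ by its truncation $\alpha_i=\min\{a_i,m_i\}$, which is the only real step, though an easy one. We claim that for nonnegative integers $x,y$ and $m\ge 1$,
\[
x+y\ge m \iff \min\{x,m\}+\min\{y,m\}\ge m.
\]
For the forward direction: if either $x\ge m$ or $y\ge m$, the corresponding truncated term already equals $m$; otherwise neither term is truncated and the sums coincide. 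The backward direction holds because $\min\{x,m\}\le x$ and $\min\{y,m\}\le y$. Applying this coordinatewise gives the desired equivalence.

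Finally, $\alpha_i$ takes values in $\{0,\dots,m_i\}\subseteq\mathbb{R}$, so each $T(\alpha_i\upharpoonright_{Z(Q)\setminus\{0\}},m_i)$ is a threshold graph in the sense of the definition. I expect no genuine obstacle. The points to be careful about are nonzero representatives, which follow from $q\neq0$, and well-definedness, which is handled by \cref{prop:exponents_well_defined}.
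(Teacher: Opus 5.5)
Your proposal is correct and takes the same route as the paper, which derives the corollary directly from \cref{prop:zero_product_in_quotient} and \cref{prop:exponents_well_defined}. You also spell out the truncation step $x+y\ge m \iff \min\{x,m\}+\min\{y,m\}\ge m$, which the paper leaves implicit.
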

	We further observe that the function $\alpha$ is surjective. 
	\begin{proposition}\label{prop:alpha_surjective}
		For every $\alpha\in \prod_{i=1}^n [m_i]_0\setminus\{0,(m_i)_{i=1}^n\}$, there exists $q\in Z(Q)\setminus \{0\}$ with $\alpha(q)=\alpha$.
	\end{proposition}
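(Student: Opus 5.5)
The natural candidate is $r\coloneqq\prod_{i=1}^n p_i^{\alpha_i}$ with $q\coloneqq\pi(r)$. Fix a target vector $\alpha\in\prod_{i=1}^n[m_i]_0\setminus\{\bm 0,\bm m\}$ and proceed as follows.

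First, compute the exponents of $r$. By \cref{theorem:prime_factorization}, the product $\prod_i p_i^{\alpha_i}$ is already a prime factorization, after dropping the indices with $\alpha_i=0$. Since the $p_i$ are pairwise non-associated, \cref{prop:division} shows that $p_i^a\mid r$ holds exactly when $a\le\alpha_i$. Hence $a_i(r)=\alpha_i\le m_i$, so $\alpha_i(q)=\min\{a_i(r),m_i\}=\alpha_i$ for every $i\in[n]$. By \cref{prop:exponents_well_defined}, this value does not depend on the chosen representative.

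Second, check that $q\neq 0$. If $q=0$, then $g\mid r$, so by \cref{prop:division} we would need $m_i\le a_i(r)=\alpha_i$ for all $i$. That forces $\alpha=\bm m$, which is excluded.

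Third, check that $q$ is a zero divisor. Because $\alpha\neq\bm 0$, the complementary vector $\bm m-\alpha$ differs from $\bm m$. Set $s\coloneqq\prod_i p_i^{m_i-\alpha_i}$. By the same argument as above, $a_i(s)=m_i-\alpha_i$, and $\pi(s)\neq 0$ because some coordinate satisfies $m_i-\alpha_i<m_i$. Moreover $a_i(r)+a_i(s)=m_i$ for every $i$, so \cref{prop:zero_product_in_quotient} gives $\pi(r)\pi(s)=0$. Therefore $q\in Z(Q)\setminus\{0\}$ with $\alpha(q)=\alpha$.

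The only delicate point is the claim $a_i(r)=\alpha_i$ exactly, in particular that $p_j$ with $j\ne i$ contributes no factor of $p_i$. This follows from the uniqueness part of \cref{theorem:prime_factorization} together with \cref{prop:division}, because the primes are pairwise non-associated. Everything else is routine.
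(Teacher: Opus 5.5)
Your proof is correct and uses the same approach as the paper, which also takes $r=\prod_{i=1}^n p_i^{\alpha_i}$ and $q=\pi(r)$ and concludes via \cref{prop:division,prop:zero_product_in_quotient}. You make explicit what the paper leaves implicit: $a_i(r)=\alpha_i$, $q\neq 0$ because $\alpha\neq\bm m$, and the zero-divisor witness $\pi\bigl(\prod_i p_i^{m_i-\alpha_i}\bigr)\neq 0$ because $\alpha\neq\bm 0$.
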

	\begin{proof}
		Let $r\coloneqq \prod_{i=1}^n p_i^{\alpha_i}$ and $q\coloneqq \pi(r)$. The desired statement follows from \cref{prop:division,prop:zero_product_in_quotient}.
	\end{proof}
	
	\begin{proposition}\label{prop:zero_divisors_quotient_ring}
	For $q\in Q\setminus \{0\}$, we have $\alpha(q)\ne (m_i)_{i=1}^n$. In particular,
	\[Z(Q)\setminus \{0\}\coloneqq \{q\in Q\setminus \{0\}\colon \alpha(q)\neq \bm{0}_n\}=\{q\in Q\setminus \{0\}\colon \alpha(q)\notin \{\bm{0}_n,(m_i)_{i=1}^n\}\}.\]
	\end{proposition}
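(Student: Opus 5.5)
We prove the two claims in turn. First we show that no non-zero element of $Q$ has $\alpha(q)=(m_i)_{i=1}^n$. Then we show that the non-zero zero divisors are exactly the non-zero elements with $\alpha(q)\neq\bm{0}_n$. The displayed equality then follows by combining the two.

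For the first claim, let $q=\pi(r)\neq 0$ with $r\in R\setminus\{0\}$. Suppose $\alpha(q)=(m_i)_{i=1}^n$, so $a_i(r)\ge m_i$ for every $i\in[n]$, i.e.\ $p_i^{m_i}\mid r$ for all $i$. By \cref{prop:division}, applied to $g=u\cdot\prod_i p_i^{m_i}$ (units do not affect divisibility), we get $g\mid r$. Hence $r\in I$ and $q=0$, a contradiction.

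For the second claim, take $q=\pi(r)\in Q\setminus\{0\}$. Suppose first that $\alpha(q)\neq\bm{0}_n$, say $\alpha_j(q)\ge 1$. Define $s\coloneqq p_j^{m_j-1}\prod_{i\ne j}p_i^{m_i}$. Then $g\nmid s$ by \cref{prop:division}, since $a_j(s)=m_j-1<m_j$, so $\pi(s)\neq 0$. On the other hand, $a_i(r)+a_i(s)\ge m_i$ for every $i$, so \cref{prop:zero_product_in_quotient} gives $q\cdot\pi(s)=0$. Thus $q\in Z(Q)$.

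Conversely, suppose $\alpha(q)=\bm{0}_n$ and $q\cdot\pi(s)=0$ for some $s\in R\setminus\{0\}$. Then $a_i(r)=0$ for all $i$, so \cref{prop:zero_product_in_quotient} forces $a_i(s)\ge m_i$ for every $i$. By \cref{prop:division} this gives $g\mid s$, i.e.\ $\pi(s)=0$. So $q$ is not a zero divisor.

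The one point needing care is that every element of $Q$ has a representative $r\neq 0$. This holds because $0\in I$ is represented by $0$ only as the zero class, and $q\ne0$ means any representative lies outside $I$ and so is non-zero. Well-definedness of $\alpha$ is supplied by \cref{prop:exponents_well_defined}. Everything else is routine bookkeeping with \cref{prop:division}.
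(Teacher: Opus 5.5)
Your proof is correct and follows the paper's argument. The first part is the same observation, that $p_i^{m_i}\mid r$ for all $i$ forces $g\mid r$; the second part likewise combines \cref{prop:zero_product_in_quotient} with an element of prescribed exponent vector, except that you write the annihilator $s=p_j^{m_j-1}\prod_{i\ne j}p_i^{m_i}$ down explicitly. The paper instead obtains it from \cref{prop:alpha_surjective}, whose proof builds exactly such a product of prime powers.
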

	\begin{proof}
	For the first part, we observe that if $q=\pi(r)$ with $a_i(r)\ge m_i$ for every $i\in [n]$, then $r\in I$ and $q=0$. The second part then follows from \cref{prop:zero_product_in_quotient,prop:alpha_surjective}.
	\end{proof}

	We conclude this section by relating the boxicity of the reduced zero divisor graph of $Q$ to the boxicity of an integral covering graph (cf. \cref{thm:boxicity_two_equiv_relations}). Let $\bm{m}\coloneqq (m_i)_{i\in [n]}$.
	\begin{corollary}\label{cor:reduced_zero_divisor_graph_D_m}
		For $n\ge 3$, the graph $\Gamma_{E'}(Q)$ is isomorphic to $D(\bm{m})$.
	\end{corollary}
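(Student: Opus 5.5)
We will show that, for $n\ge 3$, two vertices $q,q'\in Z(Q)\setminus\{0\}$ satisfy $q\sim' q'$ in $\Gamma(Q)$ if and only if $\alpha(q)=\alpha(q')$. Once this holds, $\alpha$ induces a well-defined injective map from $V(\Gamma_{E'}(Q))$ to $V(D(\bm{m}))$. This map lands in $\prod_i [m_i]_0\setminus\{\bm 0,\bm m\}$ by \cref{prop:zero_divisors_quotient_ring}, and it is surjective by \cref{prop:alpha_surjective}. Adjacency is preserved in both directions by \cref{prop:zero_product_in_quotient}. Indeed, for distinct classes $[q]'\ne[q']'$ we have $\alpha(q)\ne\alpha(q')$, so the corresponding vertices of $D(\bm m)$ are distinct. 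They are adjacent exactly when $\alpha_i(q)+\alpha_i(q')\ge m_i$ for all $i$, which holds exactly when $qq'=0$. The latter does not depend on the choice of representatives, because $\alpha$ is constant on classes.

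For the direction ``$\alpha(q)=\alpha(q')\Rightarrow q\sim' q'$'': by \cref{prop:zero_product_in_quotient,prop:exponents_well_defined}, adjacency to any third vertex $s$ depends only on $\alpha(s)$ and on $\alpha(q)=\alpha(q')$. Hence $N(q)\setminus\{q,q'\}=N(q')\setminus\{q,q'\}$.

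For the converse, suppose $\alpha(q)\ne\alpha(q')$, say $\alpha_i(q)<\alpha_i(q')$ for some $i$. We want a witness $s\notin\{q,q'\}$ adjacent to exactly one of them. Define $\bm v$ by $v_i=m_i-\alpha_i(q')$ and $v_j=m_j$ for $j\ne i$. Any $s$ with $\alpha(s)=\bm v$ is adjacent to $q'$ but not to $q$, since $\alpha_i(q)+v_i<m_i$. We have $\bm v\ne\bm m$ as long as $\alpha_i(q')\ge 1$, which holds. The remaining issues are that $\bm v$ might equal $\bm 0$, and that $s$ might coincide with $q$ or $q'$.

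This is where $n\ge 3$ is needed, and it is the main obstacle. Since $n\ge 2$ and $v_j=m_j\ge 1$ for every $j\ne i$, we get $\bm v\ne\bm 0$, so \cref{prop:alpha_surjective} supplies such an $s$. Next, $s\ne q$ because $s$ is adjacent to $q'$ while $q$ is not. It remains to rule out $s=q'$, i.e.\ $\alpha(q')=\bm v$. In that case $\alpha_j(q')=m_j$ for all $j\ne i$ and $\alpha_i(q')=m_i-\alpha_i(q')$. If this happens, we switch witnesses and exploit a second coordinate $j\ne i$, where $\alpha_j(q')=m_j$. There are two cases:
\begin{itemize}
\item If $\alpha_j(q)<m_j$, apply the same construction at index $j$, which gives $v'_j=0$. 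Then $\alpha(q')$ has $j$-th coordinate $m_j\ne 0$, so the witness differs from $q'$.
\item If $\alpha(q)$ agrees with $\bm m$ off coordinate $i$, then $q$ and $q'$ differ only in coordinate $i$. We look instead for a vertex adjacent to $q'$ but not to $q$ whose $\alpha$ vanishes in some third coordinate $k\notin\{i,j\}$. Taking $\bm w$ with $w_k=0$, $w_i=m_i-\alpha_i(q')$ and $w_\ell=m_\ell$ otherwise does the job. Such a $\bm w$ exists only because $n\ge 3$, and it is nonzero and different from both $\alpha(q)$ and $\alpha(q')$, since it has a zero in coordinate $k$ where they take the value $m_k$.
\end{itemize}
Carefully checking that these witnesses are valid vertices and cover all cases is the step I expect to require the most care. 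The failure for $n=2$, for instance $\bm m=\bm 1_2$ or when $q$ and $q'$ are twins forming a clique class, explains why that case is treated separately in later corollaries.
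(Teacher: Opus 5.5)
Your overall plan matches the paper's: show $q\sim' q'$ iff $\alpha(q)=\alpha(q')$, then transport along $\alpha$. Your treatment of the subcase $\alpha(q')=\bm{v}$ is correct. The gap is the step ``$s\ne q$ because $s$ is adjacent to $q'$ while $q$ is not'', which tacitly assumes $qq'\ne 0$. That need not hold. Suppose $\alpha(q)=\bm{v}$, i.e.\ $\alpha_j(q)=m_j$ for all $j\ne i$ and $\alpha_i(q)+\alpha_i(q')=m_i$. Then $\alpha(q)+\alpha(q')\ge\bm{m}$, so $q$ and $q'$ are adjacent. Moreover the fibre $\alpha^{-1}(\bm{v})$ can be exactly $\{q\}$, in which case your only candidate witness is $q$ itself. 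For instance, in $\mathbb{Z}/30\mathbb{Z}$ take $q=15$, $q'=10$ and $i=1$; you get $\bm{v}=(0,1,1)$, whose only preimage is $15$. A cleverer choice of coordinate does not save you in general. In $\mathbb{F}_2[x]/(x(x+1)(x^2+x+1))\cong\mathbb{F}_2\times\mathbb{F}_2\times\mathbb{F}_4$, let $q,q'$ be the unique elements with $\alpha(q)=(0,1,1)$ and $\alpha(q')=(1,0,1)$; they are adjacent. For $i=1$ your $\bm{v}$ equals $\alpha(q)$, and for $i=2$ (roles swapped) it equals $\alpha(q')$. Your fallback bullets are only entered when $\alpha(q')=\bm{v}$, so this configuration is never handled.

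This case is exactly where $n\ge 3$ is needed. In $\mathbb{F}_2[x]/(x(x+1))$, the two non-zero zero divisors have $\alpha$-values $(0,1)$ and $(1,0)$, sit in this configuration, and really are $\sim'$-equivalent. By contrast, the third coordinate in your second bullet is dispensable, since $k=j$ works there. The paper closes the case as follows, in your labelling.

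If also $\alpha_j(q')=m_j$ for all $j\ne i$, use $\bm{m}-\alpha(q')$. It is non-zero because $\alpha_i(q')<m_i$. It vanishes off coordinate $i$, so it differs from both $\alpha(q)$ and $\alpha(q')$. It is adjacent to $q'$ but not to $q$.

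Otherwise, pick $j\ne i$ with $\alpha_j(q')<m_j$ and a third index $k\notin\{i,j\}$. Let $\bm{x},\bm{y}$ have $x_i=y_i=m_i$, $y_k=m_k$, and all other entries $0$. These are valid vertices, since their $i$-th entry is at least $1$ and their $j$-th entry is $0$. Both vanish at $j$, so they differ from $\alpha(q)$ and are non-adjacent to $q'$. Both are adjacent to $q$, because $\alpha_\ell(q)=m_\ell$ for $\ell\ne i$. They differ at $k$, so at most one equals $\alpha(q')$, and the other gives the required witness.
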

	\begin{proof}
		By \cref{prop:alpha_surjective}, for every $\bm{v}\in V(D(\bm{m}))$, there is $q\in Z(Q)\setminus \{0\}$ with $\alpha(q)=\bm{v}$. Moreover, \cref{prop:zero_product_in_quotient,prop:exponents_well_defined} tell us that $q,q'\in Z(Q)\setminus \{0\}$ with $\alpha(q)=\alpha(q')$ have the same neighborhood in $Z(Q)\setminus\{0,q,q'\}$. On the other hand, if $\alpha(q)\ne \alpha(q')$, then let $i\in [n]$ with $\alpha_i(q)\ne \alpha_i(q')$. We may assume w.l.o.g.\ that $\alpha_i(q)>\alpha_i(q')$. Let $\bm{v}\in V(D(\bm{m}))$ be given by $v_i=m_i-\alpha_i(q)$ and $v_j=m_j$ for $j\in [n]\setminus \{i\}$ and let $\bm{w}=\bm{m}-\alpha(q)\in V(D(\bm{m}))$. If $\bm{v}\notin\{\alpha(q),\alpha(q')\}$, then $r\in Z(Q)\setminus \{q,q',0\}$ with $\alpha(r)=\bm{v}$ is adjacent to $q$, but not to $q'$. Next, if $\bm{v}=\alpha(q)$, then $\bm{w}\ne \alpha(q)$ because $\alpha_j(q)=m_j\ne 0=w_j$ for $j\in [n]\setminus \{i\}$. Moreover, $\bm{w}\ne \alpha(q')$ because $\alpha_i(q')<\alpha_i(q)=v_i=w_i$. Hence, $r\in Z(Q)\setminus\{0,q,q'\}$ with $\alpha(r)=\bm{w}$ is adjacent to $q$, but not to $q'$.
		Finally, assume that $\alpha(q')=\bm{v}$. If there is $j\in [n]\setminus \{i\}$ with $\alpha_j(q)<m_j$, then let $k\in [n]\setminus \{i,j\}$ and let $\bm{x},\bm{y}\in V(D(\bm{m}))$ with $x_i=y_i=m_i$, $x_\ell=y_\ell=0$ for $\ell\in [n]\setminus \{i,k\}$, $x_k=0$ and $y_k=m_k$. Then $\alpha(q')\notin \{\bm{x},\bm{y}\}$ because $\alpha_j(q')=v_j=m_j$. Let $\bm{z}\in \{\bm{x},\bm{y}\}\setminus\{\alpha(q)\}$ and let $r\in Z(Q)\setminus \{0,q,q'\}$ with $\alpha(r)=\bm{z}$. Then $r$ is adjacent to $q'$, but not to $q$ in $\Gamma(Q)$.
		On the other hand, if $\alpha_j(q)=\alpha_j(q')=m_j$ for every $j\in [n]\setminus \{i\}$, then $\bm{w}\notin \{\alpha(q),\alpha(q')\}$ and $r\in Z(Q)\setminus \{0,q,q'\}$ with $\alpha(r)=\bm{w}$ is adjacent to $q$, but not to $q'$ in $\Gamma(Q)$.
	\end{proof}
	\begin{corollary}\label{cor:reduced_zero_divisor_graph_D_m_n_2}
	For $n=2$ and $\bm{m}\ne \bm{1}_2$, the graph $\Gamma_{E'}(Q)$ is isomorphic to $D(\bm{m})$.
	\end{corollary}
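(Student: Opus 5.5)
The plan is to follow the proof of \cref{cor:reduced_zero_divisor_graph_D_m} closely: show that for $q,q'\in Z(Q)\setminus\{0\}$ we have $q\sim' q'$ if and only if $\alpha(q)=\alpha(q')$. Given this, $\alpha$ induces a well-defined injective map $V(\Gamma_{E'}(Q))\to V(D(\bm{m}))$, which is surjective by \cref{prop:alpha_surjective,prop:zero_divisors_quotient_ring}. By \cref{prop:zero_product_in_quotient}, for distinct classes $[q]'\ne[q']'$ we get $\{[q]',[q']'\}\in E(\Gamma_{E'}(Q))$ if and only if $\alpha(q)+\alpha(q')\ge\bm{m}$, which is adjacency in $D(\bm{m})$. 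So the map is an isomorphism.

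\textbf{Easy direction.} The implication ``$\alpha(q)=\alpha(q')\Rightarrow q\sim' q'$'' is verbatim as before. By \cref{prop:zero_product_in_quotient,prop:exponents_well_defined}, adjacency to any third vertex depends only on $\alpha$, so $N(q)\setminus\{q,q'\}=N(q')\setminus\{q,q'\}$.

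\textbf{Converse.} Suppose $\alpha(q)\ne\alpha(q')$. We need a vertex $r\notin\{q,q'\}$ adjacent to exactly one of $q,q'$. It suffices to find a valid vertex $\bm{z}\in V(D(\bm{m}))\setminus\{\alpha(q),\alpha(q')\}$ with $\alpha(q)+\bm{z}\ge\bm{m}$ but $\alpha(q')+\bm{z}\not\ge\bm{m}$, or vice versa. We then take $r$ with $\alpha(r)=\bm{z}$ via \cref{prop:alpha_surjective}; since $\bm{z}$ differs from both $\alpha(q)$ and $\alpha(q')$, automatically $r\notin\{q,q'\}$. Write $\{i,j\}=[2]$ with $\alpha_i(q)>\alpha_i(q')$, and set $a\coloneqq\alpha_i(q)\ge 1$.
\begin{enumerate}
\item Try $\bm{v}$ with $v_i=m_i-a$ and $v_j=m_j$. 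It is valid, adjacent to $q$ in the above sense, and not to $q'$. This works unless $\bm{v}\in\{\alpha(q),\alpha(q')\}$.
\item If $\bm{v}=\alpha(q)$, use $\bm{w}=\bm{m}-\alpha(q)=(a,0)$, exactly as in the $n\ge3$ proof. Here $m_i=2a$, so $\alpha_i(q')+a<m_i$.
\item If $\bm{v}=\alpha(q')$ and $\alpha_j(q)=m_j$, again use $\bm{w}=(m_i-a,0)$ in coordinates $(i,j)$. Its $j$-coordinate $0$ separates it from both $\alpha(q)$ and $\alpha(q')$.
\end{enumerate}

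\textbf{Main obstacle.} The remaining subcase is $\alpha(q')=(m_i-a,m_j)$ and $\alpha_j(q)=b<m_j$. The $n\ge3$ argument used a third index $k$ here, which is unavailable. Instead I look for $\bm{z}$ adjacent to $q'$ but not to $q$. Any $\bm{z}$ with $z_i\ge a$ and $z_j=0$ works: it covers $q'$, but $b+0<m_j$. Both $(a,0)$ and $(m_i,0)$ are valid vertices and differ from $\alpha(q')$ (because $m_j\neq 0$). One of them differs from $\alpha(q)=(a,b)$, unless $b=0$ and $a=m_i$.

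In that last degenerate configuration, $\alpha(q)=(m_i,0)$ and $\alpha(q')=(0,m_j)$, and here the hypothesis $\bm{m}\ne\bm{1}_2$ enters.
\begin{itemize}
\item If $m_i\ge2$, take $\bm{z}=(1,m_j)$. It is adjacent to $q$ but not to $q'$, since $1<m_i$.
\item Otherwise $m_j\ge2$, and symmetrically $\bm{z}=(m_i,1)$ is adjacent to $q'$ but not to $q$.
\end{itemize}
In either case $\bm{z}$ is distinct from $\alpha(q)$ and $\alpha(q')$.

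I expect this last case analysis, and checking that every chosen $\bm{z}$ avoids $\bm{0}$, $\bm{m}$, $\alpha(q)$ and $\alpha(q')$, to be the only delicate point. It is also the only place where the exclusion of $\bm{1}_2$ is needed: for $\bm{m}=\bm{1}_2$ the two classes $(1,0)$ and $(0,1)$ form an adjacent pair that $\sim'$ merges.
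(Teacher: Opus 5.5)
Your proof is correct, but its case decomposition differs from the paper's. The shared strategy is to show that the classes of $\sim'$ are exactly the fibres of $\alpha$ by exhibiting a separating vertex.

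\textbf{The paper's route.} It breaks the symmetry at the outset by assuming $m_2\ge 2$. It then splits on whether $\alpha_1(q)\ne\alpha_1(q')$ (subcases $\alpha_2(q)\ge 1$ or $\alpha_2(q)=0$) or $\alpha_1(q)=\alpha_1(q')$ (subcases $\alpha_1(q)<m_1$ or $\alpha_1(q)=m_1$). The separators are $(m_1-\alpha_1(q),m_2)$, $(m_1-\alpha_1(q),m_2-1)$, $(m_1,m_2-1)$, $(m_1,m_2-\alpha_2(q))$ and $(0,m_2-\alpha_2(q))$. The hypothesis $m_2\ge 2$ is used inside the validity and distinctness checks for $(m_1-\alpha_1(q),m_2-1)$ and $(m_1,m_2-1)$.

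\textbf{Your route.} You keep the template of \cref{cor:reduced_zero_divisor_graph_D_m}: first $\bm{v}$, then $\bm{m}-\alpha(q)$. You replace only the subcase that used a third index $k$, first by $(a,0)$ or $(m_i,0)$. In the configuration $\alpha(q)=(m_i,0)$, $\alpha(q')=(0,m_j)$ you then use $(1,m_j)$ or $(m_i,1)$.

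\textbf{Comparison.} Your version makes clear exactly what is new when $n=2$. It also isolates the single configuration in which $\bm{m}\ne\bm{1}_2$ is needed. The cost is a deeper chain of fallbacks. The paper's version is a flat four-case analysis in which each case is checked on its own.

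Two small remarks.
\begin{itemize}
\item In your third step, validity of $(m_i-a,0)$ requires $a<m_i$. This holds because $\alpha(q)=(a,m_j)\ne\bm{m}$. More simply, $\bm{x}\mapsto\bm{m}-\bm{x}$ maps $V(D(\bm{m}))$ to itself, and your vector is $\bm{m}-\alpha(q)$.
\item Your closing remark about $\bm{m}=\bm{1}_2$ is accurate only when $|R/p_1R|=|R/p_2R|=2$. Otherwise $\Gamma(Q)$ is a complete bipartite graph with a side of size at least $2$. Then $\sim'$ keeps the two sides apart, and $\Gamma_{E'}(Q)\cong K_2\cong D(\bm{1}_2)$ after all. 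For $\bm{m}=\bm{1}_2$ it is only your method that breaks down, since $V(D(\bm{1}_2))$ contains no third vector to act as a separator.
\end{itemize}
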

	\begin{proof}
	By \cref{prop:alpha_surjective}, for every $\bm{v}\in V(D(\bm{m}))$, there is $q\in Z(Q)\setminus \{0\}$ with $\alpha(q)=\bm{v}$. Moreover, \cref{prop:zero_product_in_quotient,prop:exponents_well_defined} tell us that $q,q'\in Z(Q)\setminus \{0\}$ with $\alpha(q)=\alpha(q')$ have the same neighborhood in $Z(Q)\setminus\{0,q,q'\}$. 
	It remains to prove that $q,q'\in Z(Q)\setminus \{0\}$ with $\alpha(q)\ne\alpha(q')$ do not have the same neighborhood in $Z(Q)\setminus\{0,q,q'\}$. To this end, using $\bm{m}\ne \bm{1}_2$, we will assume without loss of generality that $m_2\ge 2$; the case $m_1\ge 2$ can be handled analogously.
	Let $q,q'\in Z(Q)\setminus \{0\}$ with $\alpha(q)\ne\alpha(q')$. We distinguish two cases:\\
	\textbf{Case 1:} $\alpha_1(q)\ne \alpha_1(q')$. We may assume without loss of generality that $\alpha_1(q')<\alpha_1(q)$. In particular, $\alpha_1(q)\ge 1$ and $\alpha_1(q')\le m_1-1$.
	We distinguish two subcases:\\
	\textbf{Case 1.1:} $\alpha_2(q)\ge 1$. Consider the two vectors $\bm{v}\coloneqq (m_1-\alpha_1(q),m_2)$ and $\bm{w}\coloneqq (m_1-\alpha_1(q),m_2-1)$. Then $\bm{v},\bm{w}\in V(D(\bm{m}))$ because the first component is strictly smaller than $m_1$, while the second component is strictly larger than $0$. We further observe that $\{\bm{v},\bm{w}\}\setminus \{\alpha(q),\alpha(q')\}\ne \emptyset$. Indeed, as $v_1=w_1$, but $\alpha_1(q)\ne \alpha_1(q')$, the shared first component of $\bm{v}$ and $\bm{w}$ can agree with at most one of $\alpha_1(q)$ and $\alpha_1(q')$. But then, as $v_2\ne w_2$, at most one of $\bm{v}$ and $\bm{w}$ can coincide with the respective vector in $\{\alpha(q),\alpha(q')\}$.
	
	Let $\bm{x}\in \{\bm{v},\bm{w}\}\setminus \{\alpha(q),\alpha(q')\}$. By \cref{prop:alpha_surjective}, let $q_x\in Z(Q)\setminus \{0\}$ with $\alpha(q_x)=\bm{x}$. Then $q_x\notin\{q,q'\}$. Moreover, by \cref{prop:zero_product_in_quotient}, $q_x$ is adjacent to $q$, but not to $q'$ in $\Gamma(Q)$. Hence, $q$ and $q'$ do not have the same neighborhood in $Z(Q)\setminus\{0,q,q'\}$.\\
	\textbf{Case 1.2:} $\alpha_2(q)=0$. If $\alpha_2(q')=0$ as well, then consider $\bm{v}\coloneqq (m_1-\alpha_1(q),m_2)$ and, by \cref{prop:alpha_surjective}, pick $q_v\in Z(Q)\setminus \{0\}$ with $\alpha(q_v)=\bm{v}$. Then $q_v\notin \{q,q'\}$ and $q_v$ is adjacent to $q$, but not to $q'$. If $\alpha_2(q')\ge 1$, then consider $\bm{u}=(m_1,m_2-1)$, and, by \cref{prop:alpha_surjective}, pick $q_u\in Z(Q)\setminus \{0\}$ with $\alpha(q_u)=\bm{u}$. Then $q_u\notin \{q,q'\}$ and $q_u$ is adjacent to $q'$, but not to $q$. Hence, $q$ and $q'$ do not have the same neighborhood in $Z(Q)\setminus\{0,q,q'\}$.\\
	\textbf{Case 2:} $\alpha_1(q)=\alpha_1(q')$. Then $\alpha_2(q)\ne \alpha_2(q')$, assume without loss of generality that $\alpha_2(q')<\alpha_2(q)$. In particular, $\alpha_2(q)\ge 1$.
	Again, we distinguish two subcases.\\
	\textbf{Case 2.1:} $\alpha_1(q)=\alpha_1(q')\le m_1-1$. Let $\bm{v}\coloneqq (m_1,m_2-\alpha_2(q))$, and, by \cref{prop:alpha_surjective}, pick $q_v\in Z(Q)\setminus \{0\}$ with $\alpha(q_v)=\bm{v}$. Then $q_v\notin \{q,q'\}$ and $q_v$ is adjacent to $q$, but not to $q'$. Hence, $q$ and $q'$ do not have the same neighborhood in $Z(Q)\setminus\{0,q,q'\}$.\\
	\textbf{Case 2.2:} $\alpha_1(q)=\alpha_1(q')=m_1$. By \cref{prop:zero_divisors_quotient_ring}, $\alpha_2(q)\le m_2-1$.
	Let $\bm{v}\coloneqq (0,m_2-\alpha_2(q))$, and, by \cref{prop:alpha_surjective}, pick $q_v\in Z(Q)\setminus \{0\}$ with $\alpha(q_v)=\bm{v}$. Again, $q_v\notin \{q,q'\}$ and $q_v$ is adjacent to $q$, but not to $q'$. Hence, $q$ and $q'$ do not have the same neighborhood in $Z(Q)\setminus\{0,q,q'\}$.
	\end{proof}
	\begin{corollary}\label{cor:reduced_zero_divisor_graph_D_m_1_1}
	Let $\bm{m}=\bm{1}_2$. Then $\Gamma_{E'}(Q)$ is a clique, and in particular, $\boxicity(\Gamma_{E'}(Q))=0$.
	\end{corollary}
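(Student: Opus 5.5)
The plan is to show directly that any two vertices of $\Gamma(Q)$ are $\sim'$-equivalent, or else that any two $\sim'$-classes are adjacent in $\Gamma_{E'}(Q)$. Either way $\Gamma_{E'}(Q)$ is a clique, and a clique has boxicity $0$.

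Since $\bm{m}=\bm{1}_2$, we have $g=u\,p_1p_2$. By \cref{prop:zero_divisors_quotient_ring}, every $q\in Z(Q)\setminus\{0\}$ satisfies $\alpha(q)\in\{(1,0),(0,1)\}$. Accordingly, $Z(Q)\setminus\{0\}$ splits into two sets $A\coloneqq\alpha^{-1}(1,0)$ and $B\coloneqq\alpha^{-1}(0,1)$, and both are nonempty by \cref{prop:alpha_surjective}. The adjacency structure then follows from \cref{prop:zero_product_in_quotient}. Two elements of $A$ are never adjacent, since their second coordinates sum to $0<1$; likewise, two elements of $B$ are never adjacent. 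Every element of $A$ is adjacent to every element of $B$. So $\Gamma(Q)$ is the complete bipartite graph with parts $A$ and $B$.

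Next I would identify the equivalence classes. Any two vertices in $A$ have neighbourhood exactly $B$, so they satisfy $\sim$ and hence $\sim'$; the same holds for $B$. Conversely, take $a\in A$ and $b\in B$. Whenever $|A|\ge 2$ or $|B|\ge 2$, there is a third vertex that is adjacent to one of $a,b$ but not the other, so $a\not\sim' b$. The only degenerate case is $|A|=|B|=1$. There $\Gamma(Q)$ is a single edge, the two vertices are $\sim'$-equivalent, and $\Gamma_{E'}(Q)$ is one vertex, which is still a clique. Outside this case the $\sim'$-classes are exactly $A$ and $B$. Because some edge joins $A$ to $B$, the graph $\Gamma_{E'}(Q)$ consists of two adjacent vertices, which is a clique.

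Finally, a clique is an interval graph whose representation uses no interval graphs at all (the empty intersection gives the complete graph), so $\boxicity(\Gamma_{E'}(Q))=0$. None of these steps is difficult. The only points needing care are the degenerate $|A|=|B|=1$ case and using \cref{prop:zero_divisors_quotient_ring} to rule out any vertex with $\alpha(q)=\bm{m}$.
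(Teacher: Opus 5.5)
Your proof is correct and follows essentially the same route as the paper: both use \cref{prop:zero_divisors_quotient_ring,prop:zero_product_in_quotient} to see that every vertex has $\alpha(q)\in\{(1,0),(0,1)\}$, so there are at most two $\sim'$-classes, and these are adjacent whenever they are distinct. Your explicit treatment of the case $|A|=|B|=1$, where $\Gamma(Q)\cong K_2$ collapses to a single $\sim'$-class, spells out what the paper covers with the phrase ``if distinct''.
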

	\begin{proof}
	By \cref{prop:zero_product_in_quotient,prop:zero_divisors_quotient_ring}, $\Gamma_{E'}(Q)$ has at most two vertices, corresponding to the equivalence classes of zero divisors $q\in Z(Q)\setminus \{0\}$ with $\alpha(q)=(0,1)$ and $\alpha(q)=(1,0)$, respectively. As $(0,1)+(1,0)=\bm{1}_2$ these two vertices, if distinct, are adjacent. 
	\end{proof}
	\subsection{Boxicity and threshold dimension}
	In this section, we analyze the boxicity and threshold dimension of $\Gamma(Q)$. We begin by proving the following result, which tells us that the threshold dimension and boxicity of $\Gamma(Q)$ must be either $n-1$ or $n$. The remainder of this section will be dedicated to providing a full distinction between these two cases.
	\begin{lemma}
		We have $n-1\le \boxicity(\Gamma(Q))\le \thdim(\Gamma(Q))$ for $n\ge 3$ and $\boxicity(\Gamma(Q))\le \thdim(\Gamma(Q))\le n$ for every $n$.\label{lemma:simple_bounds_quotient}
	\end{lemma}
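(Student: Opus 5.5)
The statement has three inequalities, and I would prove them separately.

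The middle inequality $\boxicity(\Gamma(Q))\le\thdim(\Gamma(Q))$ holds for every graph. Threshold graphs are interval graphs, so any representation by threshold graphs is also one by interval graphs, and \cref{lemma:boxicity_interval_graphs} applies.

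For the upper bound $\thdim(\Gamma(Q))\le n$, valid for every $n$, I would invoke \cref{cor:zero_divisor_quotient_threshold}. It writes $\Gamma(Q)$ as the edge-wise intersection of the $n$ threshold graphs $T(\alpha_i\upharpoonright_{Z(Q)\setminus\{0\}},m_i)$, $i\in[n]$, on the common vertex set $Z(Q)\setminus\{0\}$.

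For the lower bound $n-1\le\boxicity(\Gamma(Q))$ when $n\ge 3$, I would use monotonicity under induced subgraphs (\cref{lemma:induced_subgraph}).
\begin{enumerate}
\item By \cref{prop:alpha_surjective}, for every $\bm{v}\in V(D(\bm{m}))$ pick one $q_{\bm{v}}\in Z(Q)\setminus\{0\}$ with $\alpha(q_{\bm{v}})=\bm{v}$. These representatives are pairwise distinct, since their $\alpha$-values differ.
\item By \cref{prop:zero_product_in_quotient,prop:exponents_well_defined}, for distinct $\bm{v},\bm{w}$ we have $q_{\bm{v}}q_{\bm{w}}=0$ if and only if $v_i+w_i\ge m_i$ for all $i$. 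This is exactly adjacency in $D(\bm{m})$.
\item Hence $\Gamma(Q)[\{q_{\bm{v}}\}]\cong D(\bm{m})$, and so $\boxicity(\Gamma(Q))\ge\boxicity(D(\bm{m}))$.
\item Since $n\ge3$ we have $\bm{m}\ne\bm{1}_2$, so \cref{lemma:lower_bound_n_minus_1} gives $\boxicity(D(\bm{m}))\ge n-1$.
\end{enumerate}
Alternatively, one could restrict further to representatives with $\alpha$-values in $\prod_i\{0,m_i\}$, which gives an induced copy of $D(\bm{1}_n)$, and then apply \cref{prop:lower_bound_via_D_n}.

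The only point needing care is step 2: the representatives must be well defined, and the zero-product criterion must be applied to elements of $Q$ rather than to lifts in $R$. \cref{prop:exponents_well_defined} handles exactly this. A route through $\Gamma_{E'}(Q)$ and \cref{cor:reduced_zero_divisor_graph_D_m} would also work, but it is unnecessary, and I do not expect any step to be a genuine obstacle.
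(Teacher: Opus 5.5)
Your proof is correct and follows essentially the paper's argument. The paper also takes the upper bound from the $n$ threshold graphs $T(\alpha_i,m_i)$, and it gets the lower bound from an induced copy of $D_n\cong D(\bm{1}_n)$ built from the explicit elements $\pi\bigl(\prod_{i\in[n]\setminus I}p_i^{m_i}\bigr)$; this is exactly your alternative restriction to $\alpha$-values in $\prod_i\{0,m_i\}$, and your main route through all of $D(\bm{m})$ is equally valid, since \cref{lemma:lower_bound_n_minus_1} itself reduces to $D(\bm{1}_n)$.
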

	\begin{proof}
		For $n\ge 3$, define $\iota\colon 2^{[n]}\setminus\{\emptyset,[n]\}\rightarrow Z(Q)\setminus \{0\}$ by setting $\iota(I)\coloneqq \pi(\prod_{i\in [n]\setminus I} p_i^{m_i})$. Then $\iota$ is injective and by \cref{prop:zero_product_in_quotient}, we have $\iota(I)\cdot \iota(J)=0$ if and only if $I\cap J=\emptyset$. Hence, $\Gamma(Q)[\iota(2^{[n]}\setminus\{\emptyset,[n]\})]\cong D_n$. This implies the lower bound by \cref{theorem:boxicity_disjointness_graph}.
		
		For the upper bound, we observe that \cref{prop:zero_product_in_quotient} implies $\Gamma(Q)$ can be written as the intersection of the $n$ threshold graphs $T(\alpha_i,m_i)_{i=1}^n$.
	\end{proof}
	We begin by settling the case $n=1$.
	
	\begin{lemma}\label{lemma:n_eq_1}
		Let $n=1$. Then \[\thdim(\Gamma(Q))=\boxicity(\Gamma(Q))=\begin{cases}
			0 & m_1 \le 2\\
			1 & m_1\ge 3
		\end{cases}.\]
	\end{lemma}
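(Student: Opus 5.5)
For $n=1$ we have $g=u\cdot p_1^{m_1}$ with $m_1\ge 2$. By \cref{cor:zero_divisor_quotient_threshold}, $\Gamma(Q)$ is the single threshold graph $T(\alpha_1,m_1)$ on $Z(Q)\setminus\{0\}$. Hence $\boxicity(\Gamma(Q))\le\thdim(\Gamma(Q))\le 1$. Both parameters vanish exactly when $\Gamma(Q)$ is a clique, since a non-edge forces at least one interval graph. So the task reduces to deciding when $\Gamma(Q)$ is complete. By \cref{prop:zero_divisors_quotient_ring}, the vertices are exactly the $q\neq 0$ with $\alpha_1(q)\in\{1,\dots,m_1-1\}$. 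By \cref{prop:zero_product_in_quotient}, two distinct vertices $q,q'$ are adjacent iff $\alpha_1(q)+\alpha_1(q')\ge m_1$.

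\textbf{Case $m_1\le 2$.} Then $m_1=2$ and every vertex has $\alpha_1=1$. Any two distinct vertices have exponent sum $2\ge m_1$, so $\Gamma(Q)$ is a clique, possibly on one vertex. Both parameters are therefore $0$.

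\textbf{Case $m_1\ge 3$.} I need a single non-edge. Take $q=\pi(p_1)$, so $\alpha_1(q)=1$, and look for a second vertex $q'\ne q$ with $\alpha_1(q')\le m_1-2$; then $1+\alpha_1(q')<m_1$.

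If $m_1\ge 4$, take $q'=\pi(p_1^2)$. It has $\alpha_1=2\le m_1-2$ and is distinct from $q$ because the $\alpha$ values differ.

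If $m_1=3$, I need two distinct elements with $\alpha_1=1$. Take $q=\pi(p_1)$ and $q'=\pi(p_1+p_1^2)=\pi(p_1(1+p_1))$. Since $1+p_1$ is not divisible by $p_1$, we get $a_1(p_1(1+p_1))=1$, so $\alpha_1(q')=1$. Moreover $q\neq q'$, because $q-q'=\pi(p_1^2)\ne 0$ as $p_1^3\nmid p_1^2$. This pair gives $1+1=2<3$, a non-edge. (Alternatively, one can argue via $|R/p_1R|\ge 2$ that the class $\alpha_1=1$ has at least two elements.)

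In either subcase $\Gamma(Q)$ is not a clique, so $\boxicity(\Gamma(Q))=\thdim(\Gamma(Q))=1$.

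The only step needing care is $m_1=3$: there a non-edge can only come from two distinct elements of the same exponent class, and the explicit pair $\pi(p_1),\pi(p_1+p_1^2)$ supplies it.
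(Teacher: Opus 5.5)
Your proof is correct and follows the paper's argument. You get the upper bound by writing $\Gamma(Q)$ as the single threshold graph $T(\alpha_1,m_1)$, show it is a clique for $m_1\le 2$, and exhibit a non-edge for $m_1\ge 3$. Your split off of the case $m_1\ge 4$ is unnecessary: the pair $\pi(p_1)\neq\pi(p_1+p_1^2)$ that you use for $m_1=3$ is exactly the paper's witness, and it works for every $m_1\ge 3$ because $p_1^{m_1}\nmid p_1^2$.
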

	\begin{proof}
		By \cref{prop:zero_divisors_quotient_ring}, we have $\alpha(q)\in \{1,\dots,m_1-1\}$ for every $q\in Z(Q)\setminus \{0\}$ and by \cref{prop:zero_product_in_quotient}, we have $q_1\cdot q_2=0$ for $q_1,q_2\in Z(q)\setminus \{0\}$ if and only if $\alpha(q_1)+\alpha(q_2)\ge m_1$. For $m_1=1$, $\Gamma(Q)$ is empty, and for $m_1=2$, it is a clique, so $\thdim(\Gamma(Q))=\boxicity(\Gamma(Q))=0$ in this case. For $m_1\ge 3$, we have $q_1\coloneqq\pi(p_1)\ne \pi(p_1+p_1^2)\eqqcolon q_2$ because $p_1^2\notin p_1^{m_1}\cdot R$. Moreover, $\alpha(q_1)=\alpha(q_2)=1$, so $\{q_1,q_2\}$ forms an independent set of size $2$ in $\Gamma(Q)$. By \cref{lemma:simple_bounds_quotient}, we have $1= \boxicity(\Gamma(Q))\le \thdim(\Gamma(Q))$ in this case. 
	\end{proof}
	
	\begin{lemma}\label{lemma:n_eq_1_reduced}
	Let $n=1$. Then \[\thdim(\Gamma_E(Q))=\boxicity(\Gamma_E(Q))=\begin{cases}
		0 & m_1 \le 3\\
		1 & m_1\ge 4
	\end{cases}.\]
	\end{lemma}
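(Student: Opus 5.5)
The plan is to identify $\Gamma_E(Q)$ with $D(m_1)$ and then read off the answer from \cref{lemma:integral_covering_special_cases}.

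By \cref{prop:zero_divisors_quotient_ring}, every $q\in Z(Q)\setminus\{0\}$ has $\alpha(q)\in\{1,\dots,m_1-1\}$. By \cref{prop:alpha_surjective}, every value in this range is attained. By \cref{prop:zero_product_in_quotient,prop:exponents_well_defined}, two distinct zero divisors $q,q'$ are adjacent if and only if $\alpha(q)+\alpha(q')\ge m_1$. Hence $\Gamma(Q)$ is the threshold graph $T(\alpha,m_1)$, and the reduced graph $\Gamma_E(Q)$ is again a threshold graph, since threshold graphs are closed under induced subgraphs and $\Gamma_E$ is isomorphic to one. So both parameters equal $0$ if $\Gamma_E(Q)$ is a clique and $1$ otherwise.

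\textbf{Case $m_1\le 3$.} We show $\Gamma_E(Q)$ is a clique.
\begin{itemize}
\item If $m_1=1$, the graph is empty.
\item If $m_1=2$, all vertices have $\alpha=1$ and are pairwise adjacent, so $\Gamma(Q)$ is a clique and so is $\Gamma_E(Q)$.
\item If $m_1=3$, the vertices with $\alpha=1$ are pairwise non-adjacent, and each is adjacent exactly to all vertices with $\alpha=2$. So they all share the neighbourhood $\alpha^{-1}(2)$ and collapse to one class. Each vertex with $\alpha=2$ has closed neighbourhood equal to the whole vertex set, so its open neighbourhood differs from that of every other vertex in the same $\alpha$-class. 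These are singleton classes. In $\Gamma_E(Q)$, the class $[\alpha^{-1}(1)]$ is adjacent to every singleton, and the singletons are pairwise adjacent. Hence $\Gamma_E(Q)$ is a clique and both parameters are $0$.
\end{itemize}

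\textbf{Case $m_1\ge 4$.} We exhibit two non-adjacent vertices of $\Gamma_E(Q)$. Take $q_1,q_2$ with $\alpha(q_1)=1$ and $\alpha(q_2)=2$, which exist by \cref{prop:alpha_surjective}. They are non-adjacent because $1+2<m_1$.

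They lie in different $\sim$-classes: a vertex $r$ with $\alpha(r)=m_1-2\ge 2$ is adjacent to $q_2$ but not to $q_1$. If $m_1=4$, this $r$ has $\alpha(r)=2$, so we must pick $r\ne q_2$. Such an $r$ exists because the class $\alpha^{-1}(2)$ contains at least two elements, $\pi(p_1^2)\neq\pi(p_1^2+p_1^3)$, as in the proof of \cref{lemma:n_eq_1}. Alternatively, compare neighbourhood sizes by threshold: $N(q_1)\subseteq\alpha^{-1}(\{m_1-1\})$, while $N(q_2)$ contains $\alpha^{-1}(m_1-2)$, which is nonempty and distinct from $q_2$ when $m_1\ge 5$.

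Since $q_1,q_2$ are non-adjacent in $\Gamma(Q)$, their classes are non-adjacent in $\Gamma_E(Q)$. So $\Gamma_E(Q)$ is a threshold graph that is not a clique, giving $\thdim(\Gamma_E(Q))=\boxicity(\Gamma_E(Q))=1$.

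The main obstacle is the bookkeeping of $\sim$-classes in the case $m_1=3$, and the borderline case $m_1=4$, where the distinguishing vertex must be chosen carefully.
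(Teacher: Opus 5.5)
Your proof is correct and follows essentially the paper's argument. Both use that $\Gamma_E(Q)$ is threshold and show it is a clique for $m_1\le 3$: you list the $\sim$-classes, while the paper just notes that non-adjacent vertices must both have $\alpha=1$ and are twins. For $m_1\ge 4$ both separate an $\alpha=1$ vertex from a non-adjacent one by a third vertex; the paper's choice $\alpha(q_2)=\alpha(q_3)=m_1-2$ only avoids your separate $m_1=4$ case. One inaccuracy is your opening ``plan'': $\Gamma_E(Q)$ need not be isomorphic to $D(m_1)$ (for $m_1=3$ it is a clique on $|R/p_1R|$ vertices, whereas $D(3)\cong K_2$), but your actual argument never relies on that identification.
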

	\begin{proof}
	First, let $m_1\le 3$. We show that $\Gamma_{E}(Q)$ is a clique. To this end, it suffices to show that whenever $q_1,q_2\in Z(Q)\setminus \{0\}$ with $q_1\cdot q_2\ne 0$, then $q_1\sim q_2$. Let $q_1,q_2\in Z(Q)\setminus \{0\}$ with $q_1\cdot q_2\ne 0$. By \cref{prop:zero_divisors_quotient_ring}, we know that $1\le \alpha(q_1),\alpha(q_2)\le m_1-1$. By \cref{prop:zero_product_in_quotient}, we can infer that $\alpha(q_1)+\alpha(q_2)<m_1\le 3$, so we must have $\alpha(q_1)=\alpha(q_2)=1$ and $m_1=3$. But then, again by \cref{prop:zero_product_in_quotient}, $q_1$ and $q_2$ are not adjacent and have the same neighborhood in $\Gamma(Q)$, so $q_1\sim q_2$.
	
	Next, let $m_1\ge 4$. Let $q_1\coloneqq \pi(p_1)$, $q_2\coloneqq \pi(p_1^{m_1-2})$ and $q_3\coloneqq \pi(p_1^{m_1-2}+p_1^{m_1-1})$. Then $\alpha(p_1)=1$ and $\alpha(q_2)=\alpha(q_3)=m_1-2$, so $q_1,q_2,q_3\in Z(Q)\setminus \{0\}$ by \cref{prop:zero_divisors_quotient_ring}. We further have $q_2\ne q_3$ as $p_1^{m_1}\nmid p_1^{m_1-1}$. By \cref{prop:zero_product_in_quotient}, $q_2$ and $q_3$ are adjacent in $\Gamma(Q)$ because $\alpha(q_2)+\alpha(q_3)=2m_1-4\ge m_1$. In particular, $[q_2]\ne [q_3]$ in $\Gamma_E(Q)$ because $q_2$ is a neighbor of $q_3$, but not of itself. By \cref{prop:zero_product_in_quotient}, $q_1$ is adjacent to neither $q_2$ nor $q_3$ in $\Gamma(Q)$. In particular, $[q_1]$ is distinct from $[q_2]$ and $[q_3]$ and not adjacent to either of them, so $\Gamma_E(Q)$ is not a clique. This implies a lower bound of $1$ on the threshold dimension and boxicity, while \cref{cor:zero_divisor_quotient_threshold} yields an upper bound of $1$.
	\end{proof}
	The remainder of this section is dedicated to proving the following two theorems:
	\begin{theorem}\label{theorem:pid_boxicity}
		Let $n\ge 2$. We have
		\[\boxicity(\Gamma(Q))=\begin{cases}
			n & \text{there is }i\in [n] \text{ with } m_i\ge 3\\
			n & m_i=2 \text{ or } |R/p_iR|\ge 3 \text{ for every }i\in [n]\\
			n-1 & n\ge 3 \text{ and } m_i\in \{1,2\} \text{ for all $i\in[n]$ and there is } i\in [n] \text{ with } m_i=1 \text{ and } |R/p_iR|=2\\
			1 & n=2 \text{ and } m_1,m_2\in\{1,2\}\text{ and there is exactly one } i\in [2] \text{ with } m_i=1 \text{ and } |R/p_iR|=2\\
			0 & n=2 \text{ and } m_1=m_2=1 \text{ and } |R/p_1R|=|R/p_2R|=2
		\end{cases}.\]
	\end{theorem}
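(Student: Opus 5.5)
The plan splits the statement into its five cases. Throughout we use the upper bound $\thdim(\Gamma(Q))\le n$ from \cref{lemma:simple_bounds_quotient} and the lower bound $n-1$ (for $n\ge 3$) from the same lemma. We also use that, by \cref{thm:boxicity_two_equiv_relations,cor:reduced_zero_divisor_graph_D_m,cor:reduced_zero_divisor_graph_D_m_n_2}, $\Gamma(Q)$ contains an induced copy of $D(\bm{m})$. To obtain it, pick one representative of each $\alpha$-class via \cref{prop:alpha_surjective}; by \cref{prop:zero_product_in_quotient} these representatives induce exactly $D(\bm{m})$. So $\boxicity(\Gamma(Q))\ge \boxicity(D(\bm{m}))$ by \cref{lemma:induced_subgraph}. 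The key quantitative fact is the size of the fibre $\alpha^{-1}(\bm{v})$. Since $Q\cong\prod_i R/p_i^{m_i}R$ by CRT, the fibre has size $\prod_i N_i(v_i)$, where $N_i(v_i)$ counts elements of $R/p_i^{m_i}R$ with exact valuation $v_i$ (or valuation $\ge m_i$ when $v_i=m_i$). Thus $N_i(m_i)=1$ and $N_i(v)=(|R/p_iR|-1)|R/p_iR|^{m_i-v-1}$ for $v<m_i$. In particular $N_i(0)=1$ with $m_i=1$ holds exactly when $|R/p_iR|=2$.

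\textbf{Case some $m_i\ge 3$.} If some $m_i\ge 4$, or $m_i=3$ and some other $m_j\ge 2$, then $\boxicity(D(\bm{m}))=n$ by \cref{theorem:boxicity_integral_covering}, and we are done. The remaining case is $m_i=3$ with all other $m_j=1$. Here I mimic \cref{lemma:boxicity_all_at_least_2} inside $\Gamma(Q)$ and exhibit an $n$-fold join of $\overline{K_2}$. For $j\ne i$, use the vertex classes $\alpha=\bm{m}-m_je_j$. For $i$, use the class $\bm{m}-2e_i$, which has $N_i(1)=(|R/p_iR|-1)|R/p_iR|\ge 2$ elements, all pairwise non-adjacent since $1+1<3$. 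The classes $\bm{m}-e_j$ for $j\ne i$ must themselves contain two elements; if not, use two elements from $\bm{m}-e_i$ and $\bm{m}-2e_i$? Those are adjacent, so this substitution fails. Instead I would take a better choice of classes. For $j\ne i$, use $\alpha$-vector $(m$ with coordinate $j=0$ and coordinate $i=2)$ together with $(\ldots, i=3)$; these two are non-adjacent because their $j$-coordinates sum to $0$. Each is adjacent to the other pairs since the $i$-coordinates sum to $\ge 1+2$. For $i$ itself, use two distinct elements of class $\bm{m}-2e_i$, which by the choice above sum to at least $3$ against every other pair. Verifying the adjacencies is routine.

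\textbf{Case all $m_i\in\{1,2\}$ and every index is ``fat'' (i.e.\ $m_i=2$ or $|R/p_iR|\ge 3$).} For each $i$, take two distinct elements $x^i,y^i$ that are non-adjacent to each other and adjacent to all $x^j,y^j$ for $j\ne i$. If $m_i=1$, take two elements of the class $\bm{m}-e_i$ (its size is $N_i(0)=|R/p_iR|-1\ge 2$). If $m_i=2$, take one element each of the classes $\bm{m}-2e_i$ and $\bm{m}-e_i$ (as in \cref{lemma:boxicity_all_at_least_2}). Then \cref{lemma:disjoint_operation} gives $\boxicity\ge n$.

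\textbf{Case $m_i\in\{1,2\}$ with some thin index $i$ ($m_i=1$, $|R/p_iR|=2$).} The upper bound is the main work and will come from \cref{lemma:box_th_1_2}. Apply it with $\mu_j=\alpha_j$ on $V=Z(Q)\setminus\{0\}$ and the thin index placed last. Its hypothesis asks for at most one $v$ with $\alpha(v)=\bm{m}-e_n$, and the fibre count gives exactly $N_n(0)=|R/p_nR|-1=1$ such element. This yields $\boxicity(\Gamma(Q))\le n-1$. For $n\ge 3$ the lower bound is \cref{lemma:simple_bounds_quotient}. For $n=2$ there are two subcases. If the other index is fat, a non-edge exists (two elements of its singleton-deficient class, or $(1,0)$ versus $(1,1)$ when $m=2$), so the value is $1$. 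If both indices are thin, then $\bm{m}=\bm{1}_2$ and $\Gamma(Q)$ is $K_2$, so the value is $0$.

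The step I expect to be the main obstacle is the exhaustive case check together with the $m_i=3$, other-$m_j=1$ construction, where the integral covering graph alone only gives $n-1$ and the fibre multiplicities must be exploited.
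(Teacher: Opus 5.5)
Your argument is correct and is essentially the paper's proof. The lower bounds use the same $n$-fold joins of $\overline{K_2}$: your final construction for $m_i=3$ coincides with the one in \cref{lemma:pid_some_mi_large}, which works verbatim for every $m_i\ge 3$, so routing the other subcases through $D(\bm m)$ and \cref{theorem:boxicity_integral_covering} is a valid but unnecessary detour. The upper bound is \cref{lemma:box_th_1_2}, with its uniqueness hypothesis coming from $|R/p_nR|=2$; you verify it by CRT fibre counting instead of the paper's divisibility argument.

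One minor slip: in the $n=2$ case, ``$(1,0)$ versus $(1,1)$'' is a non-edge only if the index with $m=2$ is the second coordinate. For $\bm m=(2,1)$, which matches your ``thin index last'' convention, these two vectors are adjacent. The lower bound $1$ still follows, since $(1,0)$ and $(0,1)$ are non-adjacent in your induced copy of $D(\bm m)$ with $\bm m\ne\bm 1_2$.
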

	\begin{theorem}\label{theorem:pid_th}
		Let $n\ge 2$. We have
		\[\thdim(\Gamma(Q))=\begin{cases}
			n & \text{there is }i\in [n] \text{ with } m_i\ge 2\\
			n &  |R/p_iR|\ge 3 \text{ for all }i\in[n]\\
			n-1 & n\ge 3 \text{ and } m_i=1 \text{ for all $i\in[n]$ and there is $i\in[n]$ with $|R/p_iR|= 2$ }\\
			1 & n=2 \text{ and } m_1=m_2=1 \text{ and there is exactly one } i\in [2] \text{ with }|R/p_iR|=2\\
			0 & n=2 \text{ and } m_1=m_2=1 \text{ and } |R/p_1R|=|R/p_2R|=2
		\end{cases}.\]
	\end{theorem}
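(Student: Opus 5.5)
The plan is to reduce \cref{theorem:pid_th} to results already proved, splitting on whether some exponent $m_i$ is at least $2$. The upper bound $\thdim(\Gamma(Q))\le n$ always holds by \cref{lemma:simple_bounds_quotient}.

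\textbf{Case: some $m_i\ge 2$.} For every $\bm{v}\in V(D(\bm{m}))$, pick one representative $q_{\bm v}\in Z(Q)\setminus\{0\}$ with $\alpha(q_{\bm v})=\bm v$; this is possible by \cref{prop:alpha_surjective}. Distinct vectors give distinct elements. By \cref{prop:zero_product_in_quotient}, $q_{\bm v}q_{\bm w}=0$ if and only if $\bm v+\bm w\ge\bm m$. Hence these representatives induce a copy of $D(\bm m)$ in $\Gamma(Q)$. Threshold dimension is monotone under induced subgraphs, since we can restrict each threshold graph. So \cref{theorem:thdim_integral_covering} (the case $\bm m\ne\bm 1_n$) gives $\thdim(\Gamma(Q))\ge n$.

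\textbf{Case: $\bm m=\bm 1_n$.} Then $g$ is squarefree, so $Q$ is reduced. By the Chinese remainder theorem, $Q\cong\prod_{i}R/p_iR$, a product of $n$ finite fields. Its minimal prime ideals are the $n$ kernels $M_i$ of the projections, so we can apply \cref{thm:box_thd_reduced}. The key step is to identify loneliness: $\bigcap_{j\ne i}M_j\setminus M_i$ consists of the elements that are $0$ in every coordinate $j\ne i$ and nonzero in coordinate $i$. It therefore has exactly $|R/p_iR|-1$ elements, so $M_i$ is lonely if and only if $|R/p_iR|=2$. Substituting into \cref{thm:box_thd_reduced} gives the remaining cases:
\begin{itemize}
\item value $n$ when all $|R/p_iR|\ge3$;
\item value $n-1$ for $n\ge3$ when some $|R/p_iR|=2$;
\item for $n=2$, value $1$ when exactly one residue field has size $2$, and value $0$ when both do.
\end{itemize}
This matches the cases with all $m_i=1$ in the statement.

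\textbf{Fallback without the reduced-ring theorem.} One can argue directly as follows.
\begin{itemize}
\item For the lower bound $n$: the fibre $\alpha^{-1}(\bm 1-\bm e_i)$ has size $|R/p_iR|-1\ge2$. Two elements from each fibre induce an $n$-fold join of $\overline{K_2}$, and \cref{lemma:disjoint_operation} yields boxicity $n$, hence threshold dimension at least $n$.
\item For the upper bound $n-1$: reorder so that the index with $|R/p_iR|=2$ is last. Apply \cref{lemma:box_th_1_2} with $\mu_j=\alpha_j$, using that the corresponding fibre is a singleton.
\item The matching lower bound $n-1$ for $n\ge3$ is \cref{lemma:simple_bounds_quotient}.
\end{itemize}

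\textbf{Main obstacle.} The only genuinely new ingredient is the fibre count and the CRT identification, namely that the set $\bigcap_{j\ne i}M_j\setminus M_i$ has size $|R/p_iR|-1$. It should be routine via $Q\cong\prod_i R/p_i^{m_i}R$, using the fact that the number of elements of $R/p^mR$ with exact $p$-adic valuation $0$ is $|R/pR|^{m-1}(|R/pR|-1)$. Everything else is a direct invocation of earlier results.
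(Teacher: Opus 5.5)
Your proof is correct, but it is organized differently from the paper's.

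\textbf{Case some $m_i\ge 2$.} The paper (\cref{lemma:pid_stronger_bound_th}) builds a partial join of $n$ copies of $\overline{K_2}$ directly inside $\Gamma(Q)$ and applies \cref{theorem:partial_join}. You instead take one representative per exponent vector, obtain an induced copy of $D(\bm{m})$, and invoke \cref{theorem:thdim_integral_covering}. Its lower bound (\cref{lemma:thdim_at_least_one_2}) is the very same partial-join construction. So the underlying argument is identical, and your version simply avoids redoing it in ring language.

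\textbf{Squarefree case.} The paper (\cref{lemma:pid_th_equal_box}) reassembles the PID-specific lemmas \cref{lemma:simple_bounds_quotient,lemma:pid_n_equal_2,lemma:pid_large_for_every_i,lemma:pid_1_2_box_n_1}; this is essentially your fallback. Your main route is different. You use the Chinese remainder theorem to see that $Q$ is a product of finite fields, check that $M_i$ is lonely exactly when $|R/p_iR|=2$, and read off all five cases from \cref{thm:box_thd_reduced}.

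\textbf{What each route buys.} Yours makes explicit a point the paper leaves implicit: the squarefree PID case is literally an instance of the reduced-ring theorem. The uniqueness check in \cref{lemma:pid_1_2_box_n_1} is the loneliness computation in disguise. Your route also gets the $n=2$ subcases for free, at the cost of importing CRT and the description of minimal primes of a product of fields. The paper's route stays inside the valuation framework $\alpha$ and shares its lemmas with \cref{theorem:pid_boxicity}.

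\textbf{Minor point on the fallback.} As written, it does not cover $n=2$: you need the lower bound $1$ when exactly one residue field has size $2$, and the value $0$ when both do. \cref{lemma:pid_n_equal_2} supplies these. Since your main route already handles them, this is not a gap.
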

	\subsection{Proof of \cref{theorem:pid_boxicity}}
	The proof of \cref{theorem:pid_boxicity} is organized as follows. \cref{lemma:pid_n_equal_2} provides additional bounds needed for $n=2$. \cref{lemma:pid_some_mi_large,lemma:pid_large_for_every_i} provide lower bounds for the case where the boxicity of $\Gamma(Q)$ is equal to $n$. \cref{lemma:pid_1_2_box_n_1} yields an upper bound for the case where the boxicity of $\Gamma(Q)$ equals $n-1$.
	
	\begin{lemma}\label{lemma:pid_n_equal_2}
		Let $n=2$. If $m_1=m_2=1$ and $|R/p_iR|=2$ for $i\in [2]$, then $\thdim(\Gamma(Q))=\boxicity(\Gamma(Q))=0$. Otherwise, $\thdim(\Gamma(Q))\ge \boxicity(\Gamma(Q))\ge 1$.
	\end{lemma}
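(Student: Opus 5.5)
The plan is to prove the two parts separately, using the description of adjacency from \cref{prop:zero_product_in_quotient} and the classification of vertices by $\alpha$ from \cref{prop:zero_divisors_quotient_ring}.

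\textbf{First case: $m_1=m_2=1$ and $|R/p_1R|=|R/p_2R|=2$.} By \cref{prop:zero_divisors_quotient_ring}, every $q\in Z(Q)\setminus\{0\}$ has $\alpha(q)\in\{(1,0),(0,1)\}$. Two vertices are adjacent exactly when their $\alpha$-vectors sum to at least $(1,1)$, which holds exactly when the vectors differ. So it suffices to show that each of the two $\alpha$-classes contains exactly one element; then $\Gamma(Q)\cong K_2$, and both parameters are $0$.

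To count the class $\alpha=(1,0)$, use the Chinese remainder theorem, $Q\cong R/p_1R\times R/p_2R$, which holds because $p_1,p_2$ are non-associated primes in a PID and hence generate comaximal ideals. An element $q$ with $\alpha(q)=(1,0)$ has zero first component and nonzero second component. Since $|R/p_2R|=2$, the second component is forced to be $1$, so $q$ is unique. The class $(0,1)$ is handled symmetrically. I expect this counting step to be the main subtlety, because $\alpha$ is defined through valuations rather than through CRT components. One therefore has to check explicitly that $\alpha_1(q)\ge 1$ if and only if the image of $q$ in $R/p_1R$ is zero; this follows from \cref{prop:exponents_well_defined} and $m_1=1$.

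\textbf{Second case: everything else.} Here it suffices to exhibit two distinct non-adjacent vertices of $\Gamma(Q)$. Such a pair shows $\Gamma(Q)$ is not complete, so it is not the intersection of zero interval graphs, giving $\boxicity\ge 1$; the inequality $\thdim\ge\boxicity$ is general.

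If some $m_i\ge 2$, say $m_2\ge 2$, then $(1,0)$ and $(0,1)$ are both vertices of $D(\bm{m})$ and are non-adjacent. By \cref{prop:alpha_surjective}, both are realized by elements $q,q'$ of $Z(Q)\setminus\{0\}$, and these elements are non-adjacent by \cref{prop:zero_product_in_quotient}.

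Otherwise $m_1=m_2=1$, and some $|R/p_iR|\ge 3$, say $i=2$. Using CRT again, choose $q=(0,a)$ and $q'=(0,b)$ with $a\ne b$ both nonzero in $R/p_2R$. Both have $\alpha=(1,0)$, so their product has $\alpha$-sum $(2,0)$. This fails the condition in the second coordinate, so $qq'\neq0$, and $q,q'$ are two distinct non-adjacent vertices.
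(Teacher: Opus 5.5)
Your proposal is correct and follows essentially the same route as the paper. In the exceptional case you show $\Gamma(Q)\cong K_2$; otherwise you exhibit a non-adjacent pair, namely $\pi(p_1),\pi(p_2)$ when some $m_i\ge 2$, or two multiples of one prime with distinct non-zero residues modulo the other when some $|R/p_iR|\ge 3$. The only real difference is that you use the Chinese remainder theorem to prove that each $\alpha$-class is a singleton in the first case, a counting step the paper simply asserts.
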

	\begin{proof}
		If $m_1=m_2=1$ and $|R/p_iR|=2$ for $i\in [2]$, then $\Gamma(Q)$ contains exactly two vertices $r$ and $s$ satisfying $\alpha_1(r)=1$ and $\alpha_2(r)=0$ and $\alpha_1(s)=0$ and $\alpha_2(s)=1$, respectively. We have $r\cdot s=0$, so $\Gamma(Q)\cong K_2$ has threshold dimension and boxicity $0$. 
		Otherwise, there is $i\in [2]$ with $m_1\ge 2$ or $|R/p_iR|\ge 3$. Assume w.l.o.g.\ that this is the case for $i=1$. If $m_1\ge 2$, then $\pi(p_1)$ and $\pi(p_2)$ are zero divisors that are non-adjacent in $\Gamma(Q)$, implying $\thdim(\Gamma(Q))\ge \boxicity(\Gamma(Q))\ge 1$. If $|R/p_1R|\ge 3$, then there exist $r,s\in R$ with $p_1\nmid r,s,r-s$. Hence, $\pi(r\cdot p_2)$ and $\pi(s\cdot p_2)$ are non-adjacent in $\Gamma(Q)$, implying again  $\thdim(\Gamma(Q))\ge \boxicity(\Gamma(Q))\ge 1$.
	\end{proof}
	\begin{lemma}\label{lemma:pid_some_mi_large}
		Let $n\ge 2$ and suppose that there is $j\in [n]$ with $m_j\ge 3$. Then $\thdim(\Gamma(Q))=\boxicity(\Gamma(Q))=n$.
	\end{lemma}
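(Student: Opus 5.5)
The upper bound $\boxicity(\Gamma(Q))\le \thdim(\Gamma(Q))\le n$ is already given by \cref{lemma:simple_bounds_quotient}. It therefore suffices to show $\boxicity(\Gamma(Q))\ge n$. The plan is to exhibit the $n$-fold join of $\overline{K_2}$ as an induced subgraph of $\Gamma(Q)$. By \cref{lemma:disjoint_operation} and $\boxicity(\overline{K_2})=1$, this join has boxicity $n$, and \cref{lemma:induced_subgraph} then gives the bound. We may assume $j=n$, so $m_n\ge 3$.

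We cannot simply pull back the vertices from \cref{lemma:some_mi_at_least_4}, because $D(\bm{m})$ with $m_n=3$ and all other $m_i=1$ has boxicity only $n-1$ by \cref{lemma:box_D_3_1}. Instead we exploit that $\Gamma(Q)$ contains several distinct elements with the same exponent vector. This mirrors \cref{lemma:n_eq_1}, where $\pi(p_1)\ne\pi(p_1+p_1^2)$.

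For $i\in[n-1]$ we would take
\[x^i\coloneqq \pi\Bigl(p_n^{m_n}\prod_{k\in[n-1]\setminus\{i\}}p_k^{m_k}\Bigr),\qquad y^i\coloneqq \pi\Bigl(p_n^{m_n-1}\prod_{k\in[n-1]\setminus\{i\}}p_k^{m_k}\Bigr),\]
with exponent vectors $\bm{m}-m_i\bm{e}_i$ and $\bm{m}-m_i\bm{e}_i-\bm{e}_n$. For index $n$ we would take
\[x^n\coloneqq \pi\Bigl(p_n\prod_{k<n}p_k^{m_k}\Bigr),\qquad y^n\coloneqq \pi\Bigl((p_n+p_n^2)\prod_{k<n}p_k^{m_k}\Bigr).\]
By \cref{prop:exponents_multiplication} and the well-definedness in \cref{prop:exponents_well_defined}, both $x^n$ and $y^n$ have $\alpha$-vector $(m_1,\dots,m_{n-1},1)$; here $1<m_n$ since $m_n\ge 3$. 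They are distinct because their difference is $\pi(p_n^2\prod_{k<n}p_k^{m_k})$, and $p_n^{m_n}$ does not divide $p_n^2$ when $m_n\ge 3$. This is exactly where $m_n\ge 3$ is used.

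We verify adjacencies with \cref{prop:zero_product_in_quotient}.
\begin{itemize}
\item $x^i$ and $y^i$ for $i<n$ are non-adjacent: their $i$-th exponents sum to $0<m_i$.
\item $x^n$ and $y^n$ are non-adjacent: their $n$-th exponents sum to $2<m_n$.
\item For distinct $i,k<n$, every member of pair $i$ is adjacent to every member of pair $k$. Each coordinate $\ell\ne n$ has some summand equal to $m_\ell$, and the $n$-th coordinate sums to at least $2m_n-2\ge m_n$.
\item For $i<n$, each of $x^i,y^i$ is adjacent to each of $x^n,y^n$. Coordinate $i$ is covered by the $m_i$ from $x^n$ or $y^n$, the other coordinates $\ell<n$ are full, and coordinate $n$ sums to at least $m_n-1+1=m_n$.
\end{itemize}
Finally, all $2n$ elements are nonzero zero divisors with pairwise distinct exponent vectors except for $x^n,y^n$, so they are pairwise distinct. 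Hence the induced subgraph is the $n$-fold join of $\overline{K_2}$, which completes the argument.

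The main point of care is the distinctness of $x^n$ and $y^n$ together with checking coordinate $n$ in the cross adjacencies; the remaining verifications are routine.
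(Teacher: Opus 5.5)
Your proof is correct and is essentially the paper's argument. You take $j=n$, use the same $2n$ elements (your $x^i,y^i$ for $i<n$ are the paper's $y_i,x_i$ with the labels swapped, and your $x^n,y^n$ coincide with the paper's twin pair $p_n$ versus $p_n+p_n^2$), and exhibit the induced $n$-fold join of $\overline{K_2}$, with your adjacency and distinctness checks written out in more detail than the paper's.
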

	\begin{proof}
		By \cref{lemma:simple_bounds_quotient}, it suffices to prove that $\boxicity(\Gamma(Q))\ge n$. We will show this by identifying an $n$-fold join of $\overline{K_2}$, the graph on two vertices without any edge, as an induced subgraph of $\Gamma(Q)$. To this end, for $i\in [n]\setminus \{j\}$, let $x_i\coloneqq \pi(p_j^{m_j-1}\cdot \prod_{\ell\in [n]\setminus \{i,j\}} p_\ell^{m_\ell})$ and $y_i\coloneqq \pi(\prod_{\ell\in [n]\setminus \{i\}}p_\ell^{m_\ell})$. Moreover, define $x_j\coloneqq \pi(p_j\cdot\prod_{\ell\in [n]\setminus \{j\}}p_\ell^{m_\ell})$ and $y_j\coloneqq \pi((p_j+p_j^2)\cdot\prod_{\ell\in [n]\setminus \{j\}}p_\ell^{m_\ell})$. As $p_j^{m_j}\nmid p_j^2$ and by \cref{prop:exponents_well_defined} and \cref{prop:zero_divisors_quotient_ring}, the $x_i$ and $y_i$ constitute pairwise distinct zero divisors. By \cref{prop:zero_product_in_quotient}, we have $x_i\cdot y_i\ne 0$ for $i\in [n]$, but $x_i\cdot x_\ell=x_i\cdot y_\ell=y_i\cdot x_\ell= y_i\cdot y_\ell= 0$ for $i\ne \ell\in [n]$. Hence, $\Gamma(Q)[\{x_i,y_i\colon i\in [n]\}]$ is isomorphic to the $n$-fold join of $\overline{K_2}$.
	\end{proof}
	\begin{lemma}\label{lemma:pid_large_for_every_i}
		Let $n\ge 2$ and suppose that for every $i\in [n]$, $m_i\ge 2$ or $R/p_i R$ contains two non-zero elements. Then $n=\boxicity(\Gamma(Q))=\thdim(\Gamma(Q))$.
	\end{lemma}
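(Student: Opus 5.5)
By \cref{lemma:simple_bounds_quotient}, $\boxicity(\Gamma(Q))\le\thdim(\Gamma(Q))\le n$, so it suffices to show $\boxicity(\Gamma(Q))\ge n$. As in \cref{lemma:pid_some_mi_large,lemma:boxicity_all_at_least_2}, I will do this by exhibiting the $n$-fold join of $\overline{K_2}$ as an induced subgraph of $\Gamma(Q)$. Since $\boxicity(\overline{K_2})=1$, \cref{lemma:disjoint_operation,lemma:induced_subgraph} then give the lower bound. The plan is to pick, for each $i\in[n]$, two distinct non-adjacent zero divisors $x_i,y_i$ with the following properties: both are divisible by $p_\ell^{m_\ell}$ for every $\ell\ne i$, and both have $\alpha_i$-values summing to less than $m_i$.

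For each $i$, set $P_i\coloneqq\prod_{\ell\in[n]\setminus\{i\}}p_\ell^{m_\ell}$ and distinguish two cases.

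\emph{Case $m_i\ge 2$.} Put $x_i\coloneqq\pi(P_i)$ and $y_i\coloneqq\pi(p_iP_i)$. Then $\alpha(x_i)$ and $\alpha(y_i)$ agree with $\bm{m}$ off coordinate $i$, while $\alpha_i(x_i)=0$ and $\alpha_i(y_i)=1<m_i$. By \cref{prop:zero_divisors_quotient_ring} (using $n\ge2$), both are non-zero zero divisors, and they are distinct since their $\alpha$ differ. Also $\alpha_i(x_i)+\alpha_i(y_i)=1<m_i$, so they are non-adjacent.

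\emph{Case $m_i=1$ and $|R/p_iR|\ge 3$.} Choose $r,s\in R$ with $p_i\nmid r$, $p_i\nmid s$ and $p_i\nmid r-s$, exactly as in \cref{lemma:pid_n_equal_2}. Put $x_i\coloneqq\pi(rP_i)$ and $y_i\coloneqq\pi(sP_i)$. Both have $\alpha$ equal to $\bm{m}$ except $\alpha_i=0$, so they are non-zero zero divisors, and they are non-adjacent since $0+0<m_i$. For distinctness, note that $x_i=y_i$ would mean $g\mid (r-s)P_i$, hence $p_i\mid (r-s)P_i$. Because $p_i$ is prime and does not divide $P_i$ (the primes are non-associated), this forces $p_i\mid r-s$, a contradiction.

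It remains to check the remaining adjacencies and distinctness. For $i\ne j$, any $u\in\{x_i,y_i\}$ and $w\in\{x_j,y_j\}$ satisfy $\alpha_\ell(u)+\alpha_\ell(w)\ge m_\ell$ for every $\ell$. Indeed, $u$ has full exponent at every $\ell\ne i$ and $w$ has full exponent at $\ell=i$. So $uw=0$ by \cref{prop:zero_product_in_quotient}. Elements from different indices are distinct, because $\alpha_i(u)<m_i=\alpha_i(w)$. Hence these $2n$ elements induce the $n$-fold join of $\overline{K_2}$.

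The only delicate step is distinctness in the $m_i=1$ case. It relies on primality of $p_i$ and on the non-associatedness of the $p_\ell$, which ensures $p_i\nmid P_i$. Everything else is routine bookkeeping with \cref{prop:zero_product_in_quotient,prop:exponents_well_defined}.
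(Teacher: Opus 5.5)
Your proof is correct and is essentially the paper's argument: you use the same witnesses ($1$ and $p_i$ times $\prod_{\ell\ne i}p_\ell^{m_\ell}$ when $m_i\ge 2$, and two distinct non-zero residues $r,s$ modulo $p_i$ times that product when $m_i=1$) to get an induced $n$-fold join of $\overline{K_2}$. Combining this with the upper bound $\thdim(\Gamma(Q))\le n$ finishes the proof. Your explicit verification of distinctness in the $m_i=1$ case, via primality of $p_i$ and $p_i\nmid\prod_{\ell\ne i}p_\ell^{m_\ell}$, fills in a step the paper only asserts.
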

	\begin{proof}
		Again, we show how to obtain an induced subgraph of $\Gamma(Q)$ that is isomorphic to the $n$-fold join of $\overline{K_2}$. To this end, for $i\in [n]$ with $m_i\ge 2$, let $x'_i\coloneqq 1$ and $y'_i\coloneqq p_i$, and for $i\in [n]$ with $m_i=1$, let $x'_i,y'_i\in R$ with $p_i\nmid x'_i$, $p_i\nmid y'_i$ and $p_i\nmid x'_i-y'_i$.
		Next, for $i\in [n]$, define $x_i\coloneqq \pi(x'_i\cdot \prod_{j\in [n]\setminus \{i\}} p_j^{m_j})$ and $y_i\coloneqq \pi(y'_i\cdot \prod_{j\in [n]\setminus \{i\}} p_j^{m_j})$. Then the $x_i$ and $y_i$ constitute pairwise distinct zero divisors.
		By \cref{prop:zero_product_in_quotient}, $x_i\cdot y_i\ne 0$, but $x_i\cdot x_\ell=x_i\cdot y_\ell=y_i\cdot x_\ell= y_i\cdot y_\ell= 0$ for $i\ne \ell\in [n]$. Hence, $\Gamma(Q)[\{x_i,y_i\colon i\in [n]\}]$ is isomorphic to the $n$-fold join of $\overline{K_2}$.
	\end{proof}
	\begin{lemma}\label{lemma:pid_1_2_box_n_1}
		Let $n\ge 2$ and suppose that $m_i\in \{1,2\}$ for $i\in [n]$. Suppose further that there is $i\in [n]$ such that $m_i=1$ and $|R/p_iR|=2$. Then we have $\boxicity(\Gamma(Q))\le n-1$. If further $m_i=1$ for all $i\in [n]$, then also $\thdim(\Gamma(Q))\le n-1$.
	\end{lemma}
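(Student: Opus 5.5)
We apply \cref{lemma:box_th_1_2} to $G\coloneqq\Gamma(Q)$. After relabeling we may assume that the special index is $n$, i.e.\ $m_n=1$ and $|R/p_nR|=2$. For $i\in[n]$ we set $\mu_i\coloneqq \alpha_i\upharpoonright_{Z(Q)\setminus\{0\}}$. Then $\mu_i$ takes values in $[m_i]_0$, with $m_i\in\{1,2\}$ for $i\in[n-1]$ and $m_n=1$. By \cref{cor:zero_divisor_quotient_threshold}, $\Gamma(Q)$ is exactly the intersection of the threshold graphs $T(\mu_i,m_i)_{i=1}^n$. So the structural hypotheses of \cref{lemma:box_th_1_2} hold, and only the uniqueness hypothesis needs checking. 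Once it is checked, \cref{lemma:box_th_1_2} gives $\boxicity(\Gamma(Q))\le n-1$. In the case $m_i=1$ for all $i$, its second statement gives $\thdim(\Gamma(Q))\le n-1$.

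The uniqueness hypothesis says that at most one $q\in Z(Q)\setminus\{0\}$ satisfies $\alpha_i(q)=m_i$ for all $i\in[n-1]$ and $\alpha_n(q)=0$. Let $q=\pi(r)$ be such an element and put $h\coloneqq\prod_{i\in[n-1]}p_i^{m_i}$. Since $\alpha_i(q)=m_i$, we have $p_i^{m_i}\mid r$ for each $i\in[n-1]$, so $h\mid r$ by \cref{prop:division}; write $r=h\cdot t$. Since $\alpha_n(q)=0$ and $p_n\nmid h$, we get $p_n\nmid t$.

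The key step is to show that $\pi(h t)$ depends only on the class of $t$ modulo $p_n$. This uses $m_n=1$: if $t\equiv t'\pmod{p_n}$, then $h(t-t')\in h p_n R=gR=I$ (up to the unit $u$), so $\pi(ht)=\pi(ht')$. Because $|R/p_nR|=2$, there is exactly one nonzero class modulo $p_n$, namely the class of $1$, and $p_n\nmid t$ forces $t$ to lie in it. Hence $q=\pi(h)$, which is the required uniqueness.

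The main thing to get right is this last step. It is the only place where the hypotheses $m_n=1$ and $|R/p_nR|=2$ are used, so it is where the argument could break if either were dropped. The remaining checks are routine: that $\alpha$ is well defined (\cref{prop:exponents_well_defined}) and that the vertex set of $\Gamma(Q)$ is the domain of the $\mu_i$.
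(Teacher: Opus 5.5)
Your proof is correct and matches the paper's: both express $\Gamma(Q)$ as the intersection of the threshold graphs $T(\alpha_i,m_i)$, check the uniqueness hypothesis of \cref{lemma:box_th_1_2} using exactly $m_n=1$ and $|R/p_nR|=2$, and then apply that lemma. The only cosmetic difference is that you show every qualifying element equals $\pi(h)$, whereas the paper takes two qualifying representatives $r,s$ and shows $g\mid r-s$.
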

	\begin{proof}
		We may assume without loss of generality that $|R/p_nR|=2$ and $m_n=1$. We can express $\Gamma(Q)$ as the intersection of $n$ threshold graphs $T(\mu_i,m_i)_{i=1}^n$, where $\mu_i(r)=\alpha_i(r)$. Assume there were $r,s\in Z(Q)\setminus \{0\}$ with $\alpha_i(r)=\alpha_i(s)=m_i$ for $i\in [n-1]$ and $\alpha_n(r)=\alpha_n(s)=0$. Then $p_i^{m_i}\mid r$ and $p_i^{m_i}\mid s$ for $i\in [n-1]$, so also $p_i^{m_i}\mid r-s$. Moreover, both $r$ and $s$ project to the unique element in $R/p_nR\setminus \{0\}$, implying that $p_n\mid r-s$. Hence, $\prod_{i=1}^{n} p_i^{m_i}\mid r-s$, implying $r=s$.
		Hence, we may apply \cref{lemma:box_th_1_2} to conclude the desired statement.
	\end{proof}
	We point out that combining \cref{lemma:pid_some_mi_large,lemma:pid_large_for_every_i,lemma:pid_n_equal_2,lemma:pid_1_2_box_n_1} implies \cref{theorem:pid_boxicity}.
	\subsubsection{Proof of \cref{theorem:pid_th}}
	In this section, we finish the proof of \cref{theorem:pid_th}.
	\cref{lemma:pid_stronger_bound_th} first provides stronger lower bounds to handle the case where $\boxicity(\Gamma(Q))=n-1$, but $\thdim(\Gamma(Q))=n$. Then \cref{lemma:pid_th_equal_box} shows that in all remaining cases, threshold dimension and boxicity of $\Gamma(Q)$ do, in fact, agree.
	\begin{lemma}\label{lemma:pid_stronger_bound_th}
		If $n\ge 2$ and $m_i\ge 2$ for some $i\in [n]$, then $\thdim(\Gamma(R/aR))=n$.
	\end{lemma}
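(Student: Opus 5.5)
The statement concerns $\Gamma(Q)$ with $Q=R/aR$, i.e.\ $a=g$. The upper bound $\thdim(\Gamma(Q))\le n$ is \cref{lemma:simple_bounds_quotient}, so only $\thdim(\Gamma(Q))\ge n$ needs proof. I would transfer the construction of \cref{lemma:thdim_at_least_one_2} from $D(\bm{m})$ into $\Gamma(Q)$: exhibit an induced subgraph of $\Gamma(Q)$ that is a partial $n$-fold join of $\overline{K_2}$, and then apply \cref{theorem:partial_join}. Since $\thdim(\overline{K_2})=1$, this gives $\thdim(\Gamma(Q))\ge n$.

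Assume without loss of generality that $m_n\ge 2$. By \cref{prop:alpha_surjective}, every vector of $V(D(\bm{m}))$ is realised as $\alpha(q)$ for some $q\in Z(Q)\setminus\{0\}$. Concretely, take $q=\pi\bigl(\prod_j p_j^{v_j}\bigr)$ for the vectors $\bm{x^i},\bm{y^i}$ defined in the proof of \cref{lemma:thdim_at_least_one_2}. Call these elements $x_i,y_i$ for $i\in[n]$. The vectors are pairwise distinct, so \cref{prop:exponents_well_defined} shows the elements are pairwise distinct. Each vector lies in $V(D(\bm{m}))$, so by \cref{prop:zero_divisors_quotient_ring} each element is a nonzero zero divisor. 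By \cref{prop:zero_product_in_quotient}, adjacency in $\Gamma(Q)$ among these elements is exactly adjacency of their $\alpha$-vectors in $D(\bm{m})$, because the vectors are distinct. Hence $\Gamma(Q)[\{x_i,y_i\colon i\in[n]\}]\cong D(\bm{m})[\{\bm{x^i},\bm{y^i}\}]$, which was shown there to be a partial $n$-fold join of $\overline{K_2}$.

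Two points need checking.
\begin{itemize}
\item The non-edges between different blocks form a matching. In the earlier proof the only such non-edges are $\{\bm{y^i},\bm{x^n}\}$ for $i\in[n-1]$, and these share the endpoint $\bm{x^n}$ across different pairs of blocks. This is allowed, since \cref{def:partial_join} only requires each $M_{i,n}$ to be a matching, and each $M_{i,n}$ has a single edge.
\item The vectors are valid vertices of $D(\bm{m})$. This uses $m_n\ge 2$: $\bm{y^n}$ has $n$-th coordinate $m_n-1\ge 1$, and $\bm{x^n}$ differs from $\bm{0}$ because $n\ge 2$.
\end{itemize}

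The main obstacle is only bookkeeping, namely confirming that adjacency transfers through $\alpha$. This follows directly from \cref{prop:zero_product_in_quotient} applied to the chosen representatives. Alternatively, one can note that $\thdim$ is monotone under induced subgraphs, and that $\Gamma(Q)$ contains an induced copy of $D(\bm{m})$ obtained by choosing one representative per $\alpha$-value. Then \cref{theorem:thdim_integral_covering} with $\bm{m}\neq\bm{1}_n$ gives the bound immediately. I would present this shorter route as the main argument.
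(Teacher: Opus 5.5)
Your proposal is correct and is essentially the paper's proof. The paper's ring elements $v_i,w_i$ are exactly $\pi\bigl(\prod_j p_j^{x^i_j}\bigr)$ and $\pi\bigl(\prod_j p_j^{y^i_j}\bigr)$ for the vectors of \cref{lemma:thdim_at_least_one_2}; it checks that they induce a partial $n$-fold join of $\overline{K_2}$ and applies \cref{theorem:partial_join}, with the upper bound coming from the $n$ threshold graphs $T(\mu_i,m_i)$. Your preferred shortcut takes one representative per $\alpha$-value to get an induced copy of $D(\bm{m})$, then uses \cref{theorem:thdim_integral_covering} together with the fact that threshold graphs are closed under induced subgraphs. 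This is a valid repackaging of the same argument, since the lower bound in \cref{theorem:thdim_integral_covering} is precisely that partial-join construction.
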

	\begin{proof}
		We first show that $\thdim(\Gamma(R/aR))\le n$.
		Let $G=(V,E)$ be the graph with $V=R/aR$ and $E=\{\{v,w\}\in\binom{V}{2}\colon v\cdot w = 0\}$. Then $\Gamma(R/aR)=G[Z(R/aR)]$ is an induced subgraph of $G$, so it suffices to show that $\thdim(G)\le n$. For $v\in V$ and $i\in [n]$, let $\mu_i(v)\coloneqq \max\{j\in \{0,\dots,m_i\}\colon p_i^j|v\}$. Then for $v,w\in V$, we have $v\cdot w=0$ if and only if for every $i\in [n]$, we have $\mu_i(v)+\mu_i(w)\ge m_i$. But this means that $G$ can be expressed as the intersection of the $n$ threshold graphs $T(\mu_i,m_i)_{i\in [n]}$.
		Next, we prove the lower bound. For ease of notation, we will assume without loss of generality that $m_n\ge 2$. For $i\in [n]$, let $G_i=(\{x_i,y_i\},\emptyset)$ consist of two isolated vertices. We will show that $\Gamma(R/aR)$ contains a partial join of $(G_i)_{i\in [n]}$ as an induced subgraph. As $\thdim(G_i)=1$ for $i\in [n]$, this will conclude the proof. For $i\in [n-1]$, we define $v_i\coloneqq\sigma(x_i)\coloneqq \prod_{j\in [n]\setminus \{i\}} p_j^{m_j}$ and $w_i\coloneqq \sigma(y_i)\coloneqq \prod_{j\in [n-1]\setminus \{i\}} p_j^{m_j}\cdot p_n^{m_n-1}$.
		We further let $v_n\coloneqq \sigma(x_n)\coloneqq \prod_{j\in [n-1]} p_j^{m_j}$ and $w_n\coloneqq \sigma(y_n)\coloneqq \prod_{j\in [n-1]} p_j^{m_j}\cdot p_n^{m_n-1}$.
		\begin{claim}
			The ring elements $v_i,w_i,i\in [n]$ constitute pairwise distinct zero divisors.
		\end{claim}
		\begin{proof}[Proof of claim.]
			For $i\in [n-1]$, $v_i\ne w_i$ because $p_n^{m_n}\mid v_i$, but $p_n^{m_n}\nmid w_i$.
			We have $p_n\mid v_i,w_i$, but $p_i\nmid v_i,w_i$, so both are zero divisors.
			We have $v_n\ne w_n$ because $p_n\mid w_n$ but $p_n\nmid v_n$. As $p_n^{m_n}\nmid v_n,w_n$ but $p_1\mid v_n,w_n$, both are zero divisors.
			Finally, $\{v_i,w_i\}\cap \{v_j,w_j\}=\emptyset$ for $1\le i < j\le n$ because $p_i\nmid v_i,w_i$, but $p_i\mid v_j,w_j$.
		\end{proof}
		\begin{claim}
			$\Gamma(R/aR)[\{v_i,w_i\colon i\in [n]\}]$ constitutes a partial join of $(G_i)_{i\in [n]}$.
		\end{claim}
		\begin{proof}[Proof of claim]
			For every $i\in [n]$, we have $p_i^{m_i}\nmid v_i\cdot w_i$, so $\sigma$ induces an isomorphism between $G_i$ and $\Gamma(R/aR)[\{v_i,w_i\}]$. Next, let $1\le i< j\le n$.
			Then $v_i\cdot v_j=0$ because $p_k^{m_k}\mid v_i$ for all $k\in [n]\setminus \{i\}$ and $p_i^{m_i}\mid v_j$. We further have $w_i\cdot w_j=0$ because $p_k^{m_k}\mid w_i$ for all $k\in [n-1]\setminus \{i\}$, $p_i^{m_i}\mid w_j$, and $p_n^{m_n-1}\mid w_i,w_j$. Hence, $M_{i,j}\coloneqq \{\{a,b\}\colon a\in \{v_i,w_i\},b\in\{v_j,w_j\},\{a,b\}\notin E(\Gamma(R/aR))\}\subseteq \{\{v_i,w_j\},\{v_j,w_i\}\}$, which is a matching.
		\end{proof}
	\end{proof}
	\begin{lemma}\label{lemma:pid_th_equal_box}
		Let $n\ge 2$ and let $m_i=1$ for all $i\in [n]$. Then \[\thdim(\Gamma(R/aR))=\begin{cases}
			0 & n=2 \text{ and } |R/p_1R|=|R/p_2R|=2\\
			1& n=2 \text{ and } |R/p_iR|=2 \text{ for exactly one }i\in[2]\\
			n-1 & n\ge 3 \text{ and } |R/p_iR|=2 \text{ for some }i\in[n]\\
			n & |R/p_iR|\ge 3 \text{ for all }i\in [n]
		\end{cases}.\]
	\end{lemma}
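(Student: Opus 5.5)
When all $m_i=1$, the quotient $Q=R/aR\cong\prod_{i}R/p_iR$ is a finite product of fields, hence reduced. Its minimal prime ideals are $M_i=\pi(p_iR)$, so there are exactly $n$ of them. The plan is to reduce \cref{lemma:pid_th_equal_box} to \cref{thm:box_thd_reduced}. The only thing left to do is translate ``lonely'' into the condition $|R/p_iR|=2$.

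\textbf{Step 1: $Q$ is reduced with the expected minimal primes.} Take $q=\pi(r)$ with $q^k=0$. Then $\prod_i p_i\mid r^k$, and primality of each $p_i$ gives $p_i\mid r$ for all $i$, so $q=0$. Hence $Q$ is reduced. Each $\pi(p_iR)$ is prime in $Q$, since $Q/\pi(p_iR)\cong R/p_iR$ is a domain. These primes are pairwise incomparable because the $p_i$ are non-associated. Any prime $P$ of $Q$ contains $0=\prod_i\pi(p_i)$, so it contains some $\pi(p_i)$ and therefore $\pi(p_iR)$. Consequently $\Min(Q)=\{\pi(p_iR)\colon i\in[n]\}$. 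Moreover $q\in\pi(p_iR)$ if and only if $\alpha_i(q)=1$, so $\mu_i=\alpha_i$ on $Z(Q)\setminus\{0\}$.

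\textbf{Step 2: index $i$ is lonely if and only if $|R/p_iR|=2$.} The set $\bigcap_{j\ne i}M_j\setminus M_i$ consists of the classes $\pi(r)$ with $p_j\mid r$ for all $j\ne i$ and $p_i\nmid r$. By the Chinese remainder theorem, $Q\cong\prod_j R/p_jR$ via $\pi(r)\mapsto(r\bmod p_j)_j$. Under this isomorphism the set corresponds to the elements whose $j$-th component is zero for $j\ne i$ and whose $i$-th component is nonzero. It is therefore in bijection with $(R/p_iR)\setminus\{0\}$, which has size $|R/p_iR|-1$. This is a singleton exactly when $|R/p_iR|=2$. Alternatively, one can avoid the Chinese remainder theorem: a pair $r,s$ with $p_i\nmid r,s,r-s$ yields two distinct elements, and conversely two distinct elements yield such residues. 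This mirrors the argument in \cref{lemma:pid_1_2_box_n_1}.

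\textbf{Step 3: read off the cases.} Apply \cref{thm:box_thd_reduced} with the dictionary from Step 2.
\begin{itemize}
\item For $n=2$: threshold dimension $0$ if both $|R/p_iR|=2$, $1$ if exactly one is, and $2=n$ otherwise.
\item For $n\ge3$: $n-1$ if some $|R/p_iR|=2$, and $n$ otherwise.
\end{itemize}
This matches the four cases of the statement, since ``$|R/p_iR|\ge3$ for all $i$'' is exactly the negation of ``some $|R/p_iR|=2$''; note $|R/p_iR|\ge2$ always, as $R/p_iR$ is a field.

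The main obstacle is Step 1, specifically identifying $\Min(Q)$ cleanly. The factorization argument above (a prime contains a product and hence one of its factors) handles it. Everything else is bookkeeping. As a fallback that bypasses \cref{thm:box_thd_reduced}, one could argue directly: \cref{lemma:pid_1_2_box_n_1} gives the upper bound $n-1$, \cref{lemma:simple_bounds_quotient} and \cref{lemma:pid_n_equal_2} give the lower bounds, and \cref{lemma:pid_large_for_every_i} covers the case of value $n$.
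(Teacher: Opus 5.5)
Your proof is correct, but your main route differs from the paper's.

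\textbf{The paper's route.} The paper never invokes \cref{thm:box_thd_reduced} here. It assembles the four cases directly from the PID-specific lemmas:
\begin{enumerate}
\item The lower bound $n-1$ for $n\ge 3$ comes from \cref{lemma:simple_bounds_quotient}.
\item The value $0$ and the lower bound $1$ for $n=2$ come from \cref{lemma:pid_n_equal_2}.
\item The value $n$ comes from \cref{lemma:pid_large_for_every_i}, with the upper bound $n$ from \cref{lemma:simple_bounds_quotient}.
\item The upper bound $n-1$ comes from \cref{lemma:pid_1_2_box_n_1}. Its hypothesis check, namely that two elements with $\alpha_i=m_i$ for $i<n$ and $\alpha_n=0$ must coincide when $|R/p_nR|=2$, is your ``lonely iff $|R/p_iR|=2$'' computation in disguise.
\end{enumerate}
This is exactly your fallback.

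\textbf{Your route.} You identify $Q$ as a reduced ring with $\Min(Q)=\{\pi(p_iR)\colon i\in[n]\}$. You then translate loneliness of index $i$ into $|R/p_iR|=2$ via the Chinese remainder theorem. This turns the lemma into a literal special case of \cref{thm:box_thd_reduced}.

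\textbf{What each buys.} Your approach explains conceptually why the condition $|R/p_iR|=2$ appears: for squarefree $a$ the PID case is just the reduced case, and that condition is loneliness. The cost is the extra commutative algebra in your Steps~1--2.

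The paper's approach avoids identifying minimal primes. It also reuses lemmas it needs anyway for \cref{theorem:pid_boxicity}, where exponents $m_i=2$ place $Q$ outside the reduced setting, so a uniform PID treatment is more economical there.

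Underneath, both arguments rest on the same ingredients: \cref{lemma:box_th_1_2} for the upper bound $n-1$, the disjointness-graph lower bound $n-1$, and an induced $n$-fold join of $\overline{K_2}$ for the value $n$.
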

	\begin{proof}
		If $n=2$ and $|R/p_1R|=|R/p_2R|=2$, the desired result follows from \cref{lemma:pid_n_equal_2}. In all remaining cases, \cref{lemma:simple_bounds_quotient,lemma:pid_n_equal_2,lemma:pid_large_for_every_i,lemma:pid_1_2_box_n_1} yield matching upper and lower bounds of $n-1$.
	\end{proof}
	Combining \cref{lemma:pid_stronger_bound_th,lemma:pid_th_equal_box} proves \cref{theorem:pid_th}.

	\bibliographystyle{plainurl}
	
	\bibliography{references}

\end{document}